\DeclareMathOperator\Fr{Fr}
\DeclareMathOperator\Gr{Gr}
\DeclareMathOperator\PSL{PSL}
\DeclareMathOperator\supp{supp}
\DeclareMathOperator\Span{span}
\DeclareMathOperator\SO{SO}
\DeclareMathOperator\Isom{Isom}
\DeclareMathOperator\ft{\mathbf{f}\bt}
\DeclareMathOperator\core{core}
\DeclareMathOperator\N{N}
\DeclareMathOperator\acc{acc}
\DeclareMathOperator\UT{UT}
\newcommand{\lef}{\left}
\newcommand{\ri}{\right}
\newcommand{\wk}{\rightharpoonup}
\newcommand{\wkstar}{\stackrel{\star}\wk}
\newcommand{\aand}{\quad\text{and}\quad}
\newcommand{\eps}{\epsilon}
\newcommand{\se}{\subset}
\newcommand{\pants}{\mathbf{\Pi}_{\epsilon, R}}
\newcommand{\curves}{\mathbf{\Gamma}_{\epsilon, R}}
\newcommand{\bC}{\mathbf{C}}
\newcommand{\bH}{\mathbf{H}}
\newcommand{\bQ}{\mathbf{Q}}
\newcommand{\bR}{\mathbf{R}}
\newcommand{\bZ}{\mathbf{Z}}
\newcommand{\bt}{\mathbf{t}}
\def\section{\@startsection{section}{1}%
\z@{.7\linespacing\@plus\linespacing}{1.0\linespacing}%
{\normalfont\centering\Large}}
\def\subsection{\@startsection{subsection}{2}%
\z@{.5\linespacing\@plus.7\linespacing}{-.5em}%
{\normalfont\bfseries}}
\def\subsubsection{\@startsection{subsubsection}{3}%
\z@{.5\linespacing\@plus.7\linespacing}{-.5em}%
{\normalfont\bfseries}}
\def\@settitle{\begin{center}%
  \baselineskip14\p@\relax
    %\bfseries
    \normalfont\LARGE%<- NEW
  \@title
  \end{center}%
}
\newtheorem{thm}{Theorem}[section]
\newtheorem{cor}[thm]{Corollary}
\newtheorem{prop}[thm]{Proposition}
\newtheorem{lem}[thm]{Lemma}
\newtheorem{claim}[thm]{Claim}
\newtheorem*{claim*}{Claim}
\newtheorem*{thm*}{Theorem}
\newtheorem*{appl*}{Application}
\newtheorem*{prop*}{Proposition}
\newtheorem*{lem*}{Lemma}
\newtheorem*{cor*}{Corollary}
\newtheorem*{conj*}{Conjecture}
\theoremstyle{definition}
\newtheorem{ques}{Question}
\newtheorem*{ques*}{Question}
\newtheorem*{exmp*}{Example}
\newtheorem*{defn*}{Definition}
\def\XXint#1#2#3{{\setbox0=\hbox{$#1{#2#3}{\int}$ }
\vcenter{\hbox{$#2#3$ }}\kern-.6\wd0}}
\theoremstyle{remark}
\newtheorem{rem}[thm]{Remark}
\newtheorem*{rem*}{Remark}
\let\c@equation\c@thm
\numberwithin{thm}{section}
\numberwithin{equation}{section}
\title{ Asymptotically geodesic surfaces  }
\author{Fernando Al Assal and Ben Lowe}
\begin{document}

\maketitle

\begin{abstract}

A sequence of distinct  closed surfaces in a hyperbolic 3-manifold $M$ is \emph{asymptotically geodesic} if their principal curvatures tend uniformly to zero. When $M$ has finite volume, we show such sequences are always asymptotically dense in the 2-plane Grassmann bundle of $M$. When $M$ has infinite volume and is geometrically finite, we show such sequences do not exist. As an application of the former, we obtain partial answers to the question of whether a negatively curved Riemannian 3-manifold that contains a sequence of asymptotically totally geodesic or totally umbilic surfaces must be hyperbolic.  Finally, we give examples to show that if the dimension of $M$ is greater than 3, the possible limiting behavior of asymptotically geodesic surfaces is less constrained than for totally geodesic surfaces.

\end{abstract}

\section{Introduction}

A sequence of distinct connected closed surfaces in a hyperbolic 3-manifold $M$ is \emph{asymptotically geodesic} if their principal curvatures tend to zero in $L^{\infty}$ norm.   Kahn-Markovi\'{c} and Kahn-Wright showed that when $M$ has finite volume, such sequences of surfaces exist abundantly \cite{kahn2012immersing}, \cite{KW}. The first theorem (Theorem \ref{contra}) in this article implies that such sequences $(S_n)$ of surfaces are always \emph{asymptotically dense} in the 2-plane Grassmann bundle $\Gr M$ of $M$ -- given an open $B\se \Gr M$, there is $N$ so for every $n\geq N$, $S_n$ intersects $B$. This result is used to obtain partial answers to variations on the question of whether a closed negatively curved Riemannian manifold that contains a sequence of asymptotically geodesic surfaces is hyperbolic (Theorems \ref{asymprigidity}, \ref{intro:totallyumbilic}.) 

We also show (Theorem \ref{intro:acylindricalgap}) that sequences of distinct asymptotically geodesic surfaces do not exist in geometrically finite hyperbolic 3-manifolds $M$ of \emph{infinite volume}. Precisely, we show there is $\eps(M)>0$ so that a closed surface of principal curvatures smaller than $\eps$ in absolute value is homotopic to a totally geodesic surface.

There can be real qualitative differences in the behavior of sequences of totally geodesic surfaces and sequences of surfaces that are only asymptotically  geodesic.  For example, the first author showed that a sequence of distinct asymptotically geodesic surfaces $\Sigma_n$ in a hyperbolic 3-manifold can scar along a closed totally geodesic surface $\Sigma$, in the sense that the corresponding sequence of probability measures $\mu_{\Sigma_n}$ converges to the probability measure corresponding to $\Sigma$ \cite{a}.  This is in contrast to what happens for sequences of distinct closed totally geodesic $\Sigma_n$ in $M$, which Mozes-Shah showed must become uniformly distributed \cite{MS}. 

Most of our theorems are in the opposite direction, and show that asymptotically geodesic surfaces in negative curvature enjoy many of the same rigidity properties as totally geodesic surfaces. In fact, many of our proofs rely on theorems from homogeneous dynamics that establish those rigidity properties for totally geodesic surfaces, such as the fact proven by Ratner and Shah that they are either closed or dense in a hyperbolic 3-manifold of finite volume \cite{R},\cite{S}.

On the other hand, we construct sequences of asymptotically Fuchsian ($K_n$-quasifuchsian for $K_n\to 1$) pleated surfaces and minimal surfaces in certain hyperbolic $d$-manifolds $M$ with $d\geq 4$ whose Hausdorff limit is a union of two distinct totally geodesic submanifold of $M$ (Theorem \ref{higherintro}.) This contrasts with the fact, that follows from the work of Mozes-Shah, that the Hausdorff limit of a sequence of totally geodesic surfaces in $M$, if it exists, has to be a single totally geodesic $k$-submanifold for $3 \leq k \leq d$ (Proposition \ref{prop:totallygeodesicaccumulation}.) 

\subsection{Density of asymptotically geodesic surfaces in finite volume} Let $M = \Gamma\backslash \bH^3$ be a hyperbolic 3-manifold of finite volume, where $\Gamma\leq \PSL_2\bC$ is a lattice. All surfaces in this paper will be connected.  An essential surface $S\se M$ is \emph{$K$-quasifuchsian} if $\pi_1(S) \leq \pi_1 M = \Gamma$ is a $K$-quasifuchsian subgroup of $\PSL_2\bC$, i.e., there is a $K$-quasiconformal homeomorphism $\phi: \partial_{\infty} \bH^3 \to \partial_{\infty} \bH^3$ so $\phi \circ \pi_1(S) \circ \phi^{-1}$ is Fuchsian. For $K$ sufficiently close to 1, work by Uhlenbeck and Seppi imply that $K$-quasifuchsian surfaces $S\se M$ are homotopic to unique minimal surfaces with principal curvatures going to zero uniformly as $K\to 1$ \cite{U},\cite{Se}.  All essentials immersed surfaces in this paper are maximal: they are not homotopic to covers of some surface in $M$ of smaller genus.  Our first theorem is: 

\begin{thm}\label{contra} Let $M$ be a finite volume hyperbolic 3-manifold. Let $B$ be an open set. Then, there is $\eps=\eps(B)>0$ such that all but finitely many of the $(1+\eps)$-quasifuchsian minimal surfaces of $M$ meet $B$. Moreover, the ones which do not are totally geodesic.
\end{thm}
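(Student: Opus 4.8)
The plan is to argue by contradiction in the frame bundle $X=\Gamma\backslash\PSL_2\bC$ of $M$, using that asymptotically geodesic surfaces are, dynamically, close to orbits of $H\cong\PSL_2(\bbR)\leq\PSL_2\bC$, the stabilizer of a totally geodesic plane, whose closed (finite volume) orbits in $X$ are exactly the frame lifts $F(\Sigma)$ of closed totally geodesic surfaces $\Sigma\se M$. For an immersed surface $S\se M$ write $F(S)\se X$ for its adapted orthonormal frame lift -- a compact $3$-manifold, identified with $T^1S$, on which $S$ is totally geodesic iff $F(S)$ is an $H$-orbit -- and $\nu_S$ for the push-forward to $X$ of the normalized Liouville measure of the induced metric; $\nu_S$ is invariant under the in-plane rotation subgroup $K=\SO(2)\leq H$, and with $\pi\colon X\to\Gr M$ the bundle projection, $S$ misses $B$ iff $F(S)$ misses the open set $\widehat B:=\pi^{-1}(B)$. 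Now suppose the theorem fails for $B$. For each $m$ either some surface violating the asserted conclusion at $\eps=\tfrac1m$ is already non-totally-geodesic, or infinitely many $(1+\tfrac1m)$-quasifuchsian minimal surfaces miss $B$, in which case -- since by Mozes--Shah only finitely many totally geodesic surfaces can miss $B$ -- infinitely many non-totally-geodesic ones do; either way we obtain a non-totally-geodesic minimal surface $T_m$ disjoint from $B$ of quasifuchsian constant $\leq 1+\tfrac1m$. A fixed such surface has quasifuchsian constant $>1$ and so occurs as $T_m$ for only finitely many $m$, so the $T_m$ comprise infinitely many distinct surfaces, which we relabel as a sequence $S_n$ of distinct maximal minimal surfaces, non-totally-geodesic, all disjoint from $B$, with quasifuchsian constants tending to $1$. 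By the Uhlenbeck--Seppi correspondence the principal curvatures of $S_n$ tend uniformly to $0$, and by smooth compactness of minimal surfaces with bounded area and curvature, $\area(S_n)\to\infty$. Finally, this suffices: once we know that for some small $\eps$ only finitely many non-totally-geodesic $(1+\eps)$-quasifuchsian minimal surfaces miss $B$, we shrink $\eps$ below their quasifuchsian constants to make that set empty, and (again by Mozes--Shah) only finitely many totally geodesic surfaces miss $B$, giving the ``moreover''.

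Next I would show every weak-$*$ subsequential limit $\nu$ of $\nu_n:=\nu_{S_n}$ is an $H$-invariant probability measure. For tightness: a closed essential surface with principal curvatures $<1$ is intrinsically negatively curved and cannot concentrate in the cusps of $M$ (a rank-two cusp contains no closed essential surface, and the area of a minimal surface deep in a cusp is uniformly controlled), so no mass escapes; pass to a subsequence $\nu_n\to\nu$. The limit is $K$-invariant since each $\nu_n$ is. It is also invariant under the geodesic flow $A=\{a_t\}\leq H$: for fixed $t$, the ambient geodesic flow $a_t$ and the intrinsic geodesic flow $g^{S_n}_t$ of $S_n$ send each frame of $F(S_n)$ to points of $X$ within $C(t)\,\|\mathrm{II}_{S_n}\|_\infty\to0$ of one another (ambient and intrinsic geodesics through a common unit tangent vector diverge at a rate bounded by the second fundamental form), while $\nu_n$ is exactly $g^{S_n}_t$-invariant; hence $(a_t)_*\nu_n-\nu_n\to0$ weak-$*$ and $(a_t)_*\nu=\nu$. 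Since $\la A\cup K\ra=H$, $\nu$ is $H$-invariant.

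Now apply Ratner's measure classification. Since there is no closed connected subgroup strictly between $\PSL_2(\bbR)$ and $\PSL_2\bC$, we get $\nu=c_0\,m_X+\sum_i c_i\,m_{O_i}$, a countable convex combination of the Haar measure $m_X$ of $X$ and Haar measures of closed $H$-orbits $O_i$. As $F(S_n)\cap\widehat B=\emptyset$ and $\widehat B$ is open, $\nu(\widehat B)=0$; since $m_X(\widehat B)>0$ this forces $c_0=0$, and whenever $c_i>0$ we get $m_{O_i}(\widehat B)=0$, i.e.\ the totally geodesic surface $\Sigma_i$ carrying $O_i$ is disjoint from $B$. By the first paragraph there are only finitely many such surfaces $\Sigma_1,\dots,\Sigma_k$, so $\supp\nu\se F(\Sigma_1)\cup\dots\cup F(\Sigma_k)$, a \emph{fixed} finite union, with $k\geq1$ as $\nu$ is a probability measure. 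In particular, when $M$ has no totally geodesic surface disjoint from $B$ -- e.g.\ when $M$ has no totally geodesic surfaces at all -- we have $k=0$, a contradiction, and are done.

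The remaining case -- the $\nu_n$ concentrating on the fixed union $F(\Sigma_1)\cup\dots\cup F(\Sigma_k)$ -- is the main obstacle, and is exactly where being $(1+\tfrac1n)$-quasifuchsian rather than merely asymptotically geodesic is essential (asymptotically geodesic surfaces really can scar onto a totally geodesic surface, so this case cannot be excluded from the $H$-invariance of $\nu$ alone). Because the limit set of $\pi_1(S_n)$ is a $(1+\tfrac1n)$-quasicircle, the developing image $\til S_n\se\bH^3$ lies in a neighborhood of a \emph{single} totally geodesic plane $P_n$ whose radius tends to $0$, with the tangent planes of $\til S_n$ converging uniformly to those of $P_n$ (Seppi's estimates). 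I would use this as follows: a definite fraction of $F(S_n)$ lying within $\delta$ of a closed orbit $O_j=F(\Sigma_j)$ forces $P_n$ to shadow some lift $\til\Sigma_j$ over a region whose diameter grows with $\area(S_n)$; but two distinct geodesic planes can stay $\delta$-close only over a region of diameter $O(\log(1/\delta))$, so $P_n$ -- and hence all of $\til S_n$, which sits in a shrinking neighborhood of $P_n$ -- must collapse onto one lift of $\Sigma_j$. Then $S_n$ lies in a shrinking tubular neighborhood of $\Sigma_j$ with nearly tangent planes, hence maps $\pi_1$-injectively into $\Sigma_j$, and -- being a closed surface mapping $\pi_1$-injectively into a closed surface -- is homotopic to a finite cover of $\Sigma_j$, contradicting maximality. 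Pinning down this last implication -- in particular ruling out far-flung ``tentacles'' of $S_n$ coexisting with a near-round limit set, and controlling how a near-totally-geodesic surface can distribute itself among several separated totally geodesic surfaces -- is the crux; everything before it is a packaging of Ratner's and Mozes--Shah's theorems with the Uhlenbeck--Seppi correspondence.
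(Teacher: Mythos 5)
Your route---pushing forward the Liouville measures of the $S_n$ to the frame bundle, proving $H$-invariance of weak-$*$ limits via the small second fundamental form, and invoking Ratner's measure classification together with Mozes--Shah---is genuinely different from the paper's, which never passes to measures: it combines a topological isolation lemma with the Ratner--Shah \emph{orbit-closure} theorem to show that geodesic discs of large radius based $\delta$-away from a finite exceptional set $E$ of totally geodesic surfaces are $\eps$-dense in $\Gr M$, and then shadows those discs by intrinsic discs of the minimal surface using Seppi's curvature bound. But your argument has a genuine gap exactly where you locate the crux: the case in which the limit measure is carried by the finite union $F(\Sigma_1)\cup\dots\cup F(\Sigma_k)$ of closed $H$-orbits over totally geodesic surfaces missing $B$. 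This is precisely the scarring scenario that the measure formalism cannot exclude, and the sketch you offer for it rests on a false statement: for a non-Fuchsian quasifuchsian minimal surface, the lift $\til S_n\se\bH^3$ is \emph{never} contained in a finite-radius neighborhood of a single totally geodesic plane, no matter how close the quasifuchsian constant is to $1$. Indeed, the trace at infinity of the $r$-neighborhood $N_r(P)$ of a totally geodesic plane $P$ is exactly the round circle $\partial_\infty P$ (the two equidistant surfaces bounding $N_r(P)$ have that same ideal boundary), so $\til S_n\se N_r(P_n)$ would force $\partial_\infty \til S_n$ to be a round circle, i.e.\ $S_n$ Fuchsian. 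Seppi's estimates give only local control (principal curvatures $O(\log K)$, hence intrinsic $R$-discs Hausdorff-close to tangent geodesic $R$-discs) plus a bound on the width of the convex hull of the limit quasicircle; they do not place the whole surface near one plane. Consequently the step ``$P_n$ shadows one lift of $\Sigma_j$ over a growing region, hence $S_n$ collapses onto $\Sigma_j$'' does not get started, and the tentacle/multiple-surface distribution problem you flag is left open in your write-up.

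What closes this case in the paper is the isolation lemma (Claim \ref{homotope} and Lemma \ref{isolate}): if \emph{every} tangent plane of a closed essential surface $S$ lies within $\delta$ of $E$, then the nearest-point projection is a covering map and $S$ is homotopic to a cover of a component of $E$ (with an additional argument in the cusped case using that the components of $S$ in the thin part are disks or annuli), which is excluded for your non-totally-geodesic maximal $S_n$; hence some tangent plane of $S_n$ is $\delta$-far from $E$ in the thick part, and then the $\eps$-density of the geodesic $R$-disc there (Lemmas \ref{epsdense}, \ref{epsdensecusps}, \ref{epsdensearith}) plus the local shadowing forces $S_n$ to meet $B$---no measures needed. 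If you wish to keep your measure-theoretic superstructure you would still need this lemma (or an equivalent) to dispose of the concentration case, at which point the Ratner/Mozes--Shah packaging becomes redundant except for your correct use of Mozes--Shah to make the exceptional set finite when $M$ contains infinitely many totally geodesic surfaces. A secondary issue: in the cusped case your non-escape-of-mass claim is asserted rather than proved; the paper avoids this by working in the thick part $M^{\geq\eta}$ and handling the thin-part intersections inside the isolation lemma.
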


As direct consequences of the previous theorem (and its proof), we obtain the following two corollaries.  We say that a surface $S$ is $\epsilon$\textit{-dense} in $\Gr M$ if every tangent 2-plane to $M$ is at a distance of at most $\epsilon $ from some tangent plane to $S$.  

\begin{cor} \label{epsilondensecorollary}
For every $\epsilon>0$ there is some $\eta = \eta(M)>0$ so that all but finitely many  $(1+\eta)$-quasifuchsian closed minimal surfaces are $\eps$-dense in the $\eta$-thick part of $M$.  
\end{cor}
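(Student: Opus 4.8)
The plan is to deduce this from Theorem~\ref{contra} by a finite subcover argument on the Grassmann bundle over a thick part of $M$.

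Fix $\epsilon>0$ and a thickness $t>0$. The thick part $M^{\geq t}$ is compact, hence so is the restricted Grassmann bundle $\Gr(M^{\geq t})$, which we equip with the metric inherited from $\Gr M$. Cover $\Gr(M^{\geq t})$ by finitely many open balls $B_1,\dots,B_N$ of radius $\epsilon/2$. By Theorem~\ref{contra}, for each $j$ there is $\delta_j=\delta(B_j)>0$ such that all but finitely many $(1+\delta_j)$-quasifuchsian closed minimal surfaces meet $B_j$, the finitely many exceptions being totally geodesic. Set $\eta_0:=\min_j\delta_j>0$. A $(1+\eta_0)$-quasifuchsian surface is in particular $(1+\delta_j)$-quasifuchsian for every $j$, so all but finitely many $(1+\eta_0)$-quasifuchsian closed minimal surfaces $S$ meet every $B_j$; then for any $2$-plane $v$ tangent to a point of $M^{\geq t}$, the ball $B_j$ containing $v$ also contains a tangent plane of $S$, which is therefore at distance $<\epsilon$ from $v$, so $S$ is $\epsilon$-dense in $M^{\geq t}$. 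Each of the finitely many exceptional surfaces misses some $B_j$ and is hence totally geodesic.

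This already gives the corollary for the thick part $M^{\geq t}$ with any fixed $t>0$ and $\eta=\eta_0(t)$. To arrange, as in the statement, that the thickness parameter and the quasifuchsian constant agree, one takes $t$ small and uses that the classes of $(1+\eta)$-quasifuchsian surfaces are nested increasing in $\eta$: it suffices to find $t>0$ with $\eta_0(t)\geq t$, after which one renames $\eta:=t$, since then every $(1+\eta)$-quasifuchsian surface is also $(1+\eta_0(t))$-quasifuchsian and the previous paragraph applies verbatim with $M^{\geq\eta}$ in place of $M^{\geq t}$.

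The point requiring care --- and the main obstacle --- is producing such a $t$: as $t\to 0$ the thin and cuspidal parts of $M$ are absorbed into $M^{\geq t}$, yet deep inside a cusp the tangent planes realized by a nearly-geodesic closed surface are very restricted, so one must verify that the quantities $\delta(B_j)$ from Theorem~\ref{contra} do not decay faster than $t$ as the $B_j$ range over a cover of $\Gr(M^{\geq t})$. This is where one uses the proof of Theorem~\ref{contra} and not merely its statement: the asymptotic density of a sequence of distinct asymptotically geodesic surfaces holds with enough uniformity on compact subsets of $\Gr M$ to close the loop. Granting this, the finite subcover mechanism and the identification of the exceptions with finitely many totally geodesic surfaces are immediate.
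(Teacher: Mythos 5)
Your first paragraph is correct, and it is essentially the argument the paper intends, packaged a little differently. The paper does not cover $\Gr(M^{\geq t})$ by balls and apply the statement of Theorem \ref{contra} ball by ball; it reads the uniform statement directly off the \emph{proof} of Theorem \ref{contra}: Lemma \ref{isolate} gives, for every non-exceptional surface, a tangent plane in the $\delta$-thick part at distance at least $\delta$ from the exceptional union $E$; Lemmas \ref{epsdense} and \ref{epsdensecusps} say that the geodesic disc of radius $R_0$ based at \emph{any} such plane is already $\eps$-dense in $\Gr M$ (resp.\ $\Gr M^{\geq \eta}$), uniformly over such planes; and Seppi's principal curvature bound \cite{Se} transfers this to the minimal surfaces once the quasifuchsian constant is small. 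That is exactly why the corollary is announced as a consequence of the theorem ``and its proof.'' Your finite-subcover derivation from the statement alone is a valid alternative for a fixed thickness parameter $t$, and your identification of the finitely many exceptions as totally geodesic is correct.

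Where you diverge from the paper is the final step, and the claim you ``grant'' there is a genuine gap. The paper never attempts to find $t$ with $\eta_0(t)\geq t$: the thick-part parameter is fixed first, in terms of $\eps$ and $M$ only (it comes from Lemma \ref{isolate} and the Margulis constant), and the quasifuchsian constant is shrunk afterwards; the single letter $\eta$ should be read in this decoupled way, with decreasing $\eta$ only shrinking the class of surfaces considered, and this decoupled form is all that is used elsewhere in the paper. Moreover, the uniformity you invoke does not follow from the paper's proof: Lemmas \ref{epsdense} and \ref{epsdensecusps} are proved by compactness together with Ratner--Shah and Mozes--Shah, so they give no rate for $R_0$ in the thickness parameter, and hence no lower bound for $\eta_0(t)$ in terms of $t$ can be extracted from them. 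Even granting effective rates, the estimate points the wrong way: $\eps$-density down to injectivity radius $t$ in a cusped $M$ forces $R_0(t)\gtrsim \log(1/t)$, while a surface with principal curvatures at most $\kappa$ stays $\eps$-close in $\Gr M$ to its tangent geodesic disc of radius $R$ only for $\kappa \lesssim \eps e^{-R}$, so the threshold your mechanism produces is of order $\eps e^{-R_0(t)} \lesssim \eps\, t$, and one should not expect $\eta_0(t)\geq t$ along this route at all. So either prove the corollary in the decoupled form the paper uses --- which your first paragraph already does --- or recognize that literally equating the quasifuchsian parameter with the thickness parameter would require a separate argument about how deeply nearly geodesic closed surfaces penetrate the cusps, which your proposal does not supply.
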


\begin{cor}\label{dense}
Let $(S_n)$ be a sequence of distinct closed  asymptotically Fuchsian minimal surfaces in $M$. Then, $(S_n)$ is asymptotically dense in $\Gr M$: for every open $U\se \Gr M$, there is $N$ so that for every $n\geq N$, $S_n$ intersects $U$.
\end{cor}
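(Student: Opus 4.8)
The plan is to obtain this as a direct repackaging of Theorem~\ref{contra} combined with the distinctness hypothesis, so that essentially no new work is needed. Fix a nonempty open set $U\se \Gr M$ (if $U=\emptyset$ there is nothing to prove), where as usual ``$S_n$ intersects $U$'' means that the canonical lift of $S_n$ to the Grassmann bundle meets $U$. Apply Theorem~\ref{contra} with $B=U$ to produce a number $\eps=\eps(U)>0$ such that the collection $\cE$ of $(1+\eps)$-quasifuchsian closed minimal surfaces of $M$ that fail to meet $U$ is finite. (Theorem~\ref{contra} also tells us that each surface in $\cE$ is totally geodesic, but the finiteness alone is all I intend to use.)

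Next I would translate the hypothesis that $(S_n)$ is asymptotically Fuchsian into the language of Theorem~\ref{contra}. By definition each $S_n$ is $K_n$-quasifuchsian for some $K_n$ with $K_n\to 1$ (equivalently, by the theorems of Uhlenbeck and Seppi, $S_n$ is the unique minimal surface in its homotopy class and its principal curvatures tend uniformly to zero, which forces $K_n\to 1$). Since a $K'$-quasiconformal homeomorphism is also $K$-quasiconformal for every $K\geq K'$, being $K_n$-quasifuchsian with $K_n\le 1+\eps$ makes $S_n$ a $(1+\eps)$-quasifuchsian surface; hence there is $N_1$ so that $S_n$ is a $(1+\eps)$-quasifuchsian closed minimal surface of $M$ for all $n\geq N_1$.

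Finally I would finish with a pigeonhole argument. For $n\geq N_1$ the surface $S_n$ lies in the family of $(1+\eps)$-quasifuchsian closed minimal surfaces governed by Theorem~\ref{contra}, so if $S_n$ does not meet $U$ then $S_n\in\cE$. Because the $S_n$ are pairwise distinct while $\cE$ is finite, the index set $\{\,n\geq N_1 : S_n\in\cE\,\}$ is finite, and choosing $N$ larger than its maximum (and than $N_1$) guarantees that $S_n$ meets $U$ for all $n\geq N$, as desired. The only mildly delicate points are the identification of ``asymptotically Fuchsian'' with ``eventually $(1+\eps)$-quasifuchsian'' and the trivial observation that a finite set of surfaces can contain only finitely many members of a sequence of distinct surfaces; there is no genuine obstacle here, as the entire substance of the statement is already contained in Theorem~\ref{contra}.
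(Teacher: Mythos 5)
Your argument is correct and is exactly how the paper intends Corollary \ref{dense} to follow: it is stated there as a direct consequence of Theorem \ref{contra}, via the same observation that the $S_n$ are eventually $(1+\eps)$-quasifuchsian and that the finitely many exceptional surfaces can absorb only finitely many terms of a sequence of distinct surfaces. No gap; nothing further is needed.
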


These theorems imply that, in the topology induced by the Hausdorff distance between closed sets in $\Gr M$, sequences of asymptotically geodesic surfaces $S_n$ always limit to all of $\Gr M$. This is in contrast with the fact that sequences of asymptotically geodesic surfaces $S_n$ may not equidistribute in $\Gr M$ -- in fact, work by the first author showed that the probability measures $\nu_n$ induced by them on $\Gr M$ may limit in the weak-* topology to a measure supported on a single totally geodesic surface in $M$ \cite{a}. 

A crucial ingredient in these proofs is the Ratner-Shah theorem stating that totally geodesic surfaces are either closed or dense in $M$. When $M$ has infinitely many distinct totally geodesic surfaces, the fact due to Mozes-Shah that they are equidistributing also plays an important role \cite{MS}.

\begin{rem}
Results similar to the previous theorem and corollaries were recently and independently obtained by Xiaolong Hans Han \cite{hans}.  
\end{rem}

\subsection{Hausdorff limits in higher dimensions}

We say a sequence of surfaces $(S_n)$ in a hyperbolic $d$-manifold $M$ is \emph{asymptotically Fuchsian} if the $S_n$ are $K_n$-quasifuchsian for $K_n\to 1$ as $n\to \infty$. (Analogously to the 3-dimensional case, we say an essential surface $S\se M$ is $K$-quasifuchsian if $\pi_1(S)\leq \SO(d,1)$ is a $K$-quasifuchsian subgroup, meaning there is a $K$-quasiconformal homeomorphism of $\partial_{\infty}\bH^d$ conjugating $\pi_1(S)$ into $\SO(2,1)$.)

Asymptotically Fuchsian surfaces in $M$ are always homotopic to asymptotically geodesic ones \cite{j}. Conversely, asymptotically geodesic surfaces are asymptotically Fuchsian \cite{epstein1986hyperbolic}. We show

\begin{thm}\label{higherintro}
Suppose $M$ is a closed hyperbolic $d$-manifold containing two closed totally geodesic 3-dimensional submanifolds $N_1$ and $N_2$ that intersect along a closed totally geodesic surface.

Then, there exists a sequence $(S_n)$ of asymptotically Fuchsian closed connected pleated surfaces whose Hausdorff limit in $M$ is $N_1\cup N_2$.  There also exists a sequence $(S_n)$ of asymptotically geodesic minimal surfaces whose Hausdorff limit in $M$ is $N_1\cup N_2$.
\end{thm}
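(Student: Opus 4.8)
The plan is to build the surfaces $S_n$ by a gluing construction inside the $3$-manifolds $N_1$ and $N_2$ along the common totally geodesic surface $\Sigma = N_1 \cap N_2$, using the Kahn--Markovi\'c pair-of-pants technology. First I would recall that $\Sigma$ is a closed hyperbolic surface and that, by the Ehrenpreis-type results underlying \cite{kahn2012immersing}, for each small $\epsilon > 0$ and large $R$ there is a nearly-geodesic ``good'' pants decomposition of a finite cover of $\Sigma$; the key extra flexibility is that each pair of pants can be pushed slightly to one side or the other of $\Sigma$ inside $N_1$ or $N_2$. Concretely, I would choose a pants decomposition of a cover $\wt\Sigma \to \Sigma$ and then assemble a closed pleated surface $S_n$ out of $(\epsilon_n, R_n)$-skew pants, with $\epsilon_n \to 0$ and $R_n \to \infty$, by letting roughly half the pants sit on the $N_1$ side near $\Sigma$ and half on the $N_2$ side, with the seams matched up along curves that lie in $\Sigma$ itself (or within $\epsilon_n$ of it). The feet-matching and bending-angle estimates of Kahn--Markovi\'c guarantee that the resulting surface is closed, connected (after passing to a suitable component/cover), and $K_n$-quasifuchsian with $K_n \to 1$; that it is \emph{pleated} is automatic from the construction, and its bending locus can be arranged to converge to $\Sigma$.

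The second step is to control the Hausdorff limit. Since every pants used lies within $\epsilon_n$ of $\Sigma$ in either $N_1$ or $N_2$, the whole surface $S_n$ is contained in the $\epsilon_n$-neighborhood of $\Sigma$, hence the Hausdorff limit of $(S_n)$, if it exists, is contained in $\Sigma$ --- but that is \emph{too} small. To get the limit to be all of $N_1 \cup N_2$ rather than just $\Sigma$, I would instead not confine the pants to a neighborhood of $\Sigma$: I would use the fact that $N_1$ and $N_2$ are closed hyperbolic $3$-manifolds and equidistribute good pants throughout each of them, gluing along $\Sigma$ only a controlled ``bridge'' of pants that transitions from living equidistributed in $N_1$ to living equidistributed in $N_2$. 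The Kahn--Markovi\'c--Wright mixing/counting argument shows that good pants equidistribute in the frame bundle of a closed (or finite-volume) hyperbolic $3$-manifold, so a sequence built this way has $S_n \cap N_i$ becoming $\epsilon_n$-dense in $N_i$ for $i = 1,2$; combined with connectedness across $\Sigma$, the Hausdorff limit is exactly $N_1 \cup N_2$.

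For the minimal surface statement, I would feed the pleated surfaces $S_n$ (which are $K_n$-quasifuchsian with $K_n \to 1$) into the Uhlenbeck--Seppi machinery quoted in the introduction: for $K_n$ close enough to $1$ each $S_n$ is homotopic to a unique minimal surface $S_n'$ with principal curvatures tending uniformly to $0$, hence asymptotically geodesic. One must then check that passing from $S_n$ to the homotopic minimal representative does not move the surface far in the Hausdorff metric --- this follows because an asymptotically Fuchsian surface and its minimal representative are uniformly close (their lifts to $\bH^d$ have the same limit set and bounded Hausdorff distance, with the bound $\to 0$ as $K_n \to 1$), so $S_n'$ is contained in an $o(1)$-neighborhood of $S_n$ and the Hausdorff limit is unchanged.

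The main obstacle I anticipate is the gluing across $\Sigma$: one needs a pants assembly that is simultaneously (i) equidistributed in $N_1$ on one side, (ii) equidistributed in $N_2$ on the other, and (iii) seamlessly closed up along curves near $\Sigma$ with matching feet and small bending angles. Reconciling the Kahn--Markovi\'c feet-matching (which wants all curves to have nearly the same length $2R$ and nearly-opposite feet) with the requirement that some curves lie in the totally geodesic $\Sigma \subset N_1 \cap N_2$ --- so that the same curve can bound pants on both the $N_1$-side and the $N_2$-side --- is the delicate point. I would handle it by choosing the transition curves to be closed geodesics of $\Sigma$ of length near $2R_n$ whose $N_1$- and $N_2$-normal framings are compatible, which exist in abundance by equidistribution of closed geodesics in $\Sigma$, and then invoking the standard ``$\epsilon$-close feet implies small correction'' perturbation lemma of \cite{kahn2012immersing} on both sides.
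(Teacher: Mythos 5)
Your overall strategy matches the paper's at a high level (equidistributed good-pants surfaces in each $N_i$, joined across $\Sigma$, then minimal representatives shown to be Hausdorff-close), but the step you yourself flag as delicate -- closing up across $\Sigma$ -- is where your sketch has a genuine gap. Gluing a pair of pants lying in $N_1$ directly to a pair of pants lying in $N_2$ along a good curve $\gamma\subset\Sigma$ is never a standard good gluing in the three-dimensional Kahn--Markovi\'{c}/Kahn--Wright sense: the feet of $N_1$-pants at $\gamma$ lie in the circle of unit normals to $\gamma$ tangent to $N_1$, the feet of $N_2$-pants lie in the circle tangent to $N_2$, and these two circles meet only in the two directions tangent to $\Sigma$. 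So ``compatible normal framings'' would force both feet to be within $O(\eps/R)$ of the $\Sigma$-tangent directions, and even then the gluing does not take place inside any hyperbolic 3-manifold, so the theorem that good assemblies are $(1+O(\eps))$-quasifuchsian does not apply off the shelf in $\bH^d$; equidistribution of closed geodesics of $\Sigma$ says nothing about this. The paper resolves exactly this point by inserting an intermediate piece: by the Kahn--Markovi\'{c} proof of the Ehrenpreis conjecture, $T=N_1\cap N_2$ has a finite cover with a pants decomposition into $(\eps,R)$-good pants, and these pants lie in $T$, hence are simultaneously good pants of $N_1$ and of $N_2$. One cuts the equidistributed surface $S_1(\eps,R)$ and this cover along a cuff $\gamma_1\subset T$ and cross-glues, using equidistribution of feet of good pants of $N_1$ along $\gamma_1$ (Kahn--Wright, Theorem 3.3) to find a well-glued pair in $S_1$ whose foot is $\eps/R$-close to the foot of a $T$-pants; this regluing happens entirely inside the hyperbolic 3-manifold $N_1$, where the assembly theorem applies verbatim, and the same is done with $S_2$ along a second cuff $\gamma_2\subset T$ inside $N_2$. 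Without this Ehrenpreis input (or a genuinely higher-dimensional good-assembly theorem, which you do not supply), your ``bridge'' is not actually constructed.

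Two smaller issues. Connectedness of the equidistributed surfaces in each $N_i$ cannot be obtained by ``passing to a suitable component'' -- that would destroy the property of containing every good pants, which is what drives density; the paper invokes Liu--Markovi\'{c} to build connected surfaces out of $n_i(\eps,R)$ copies of each good pants, and connectedness of the final surface then comes from the cut-and-reglue operations. For the minimal-surface statement, Uhlenbeck--Seppi is a 3-dimensional result; since $d\geq 4$ the paper instead uses Jiang's work to get branch-point-free minimal representatives with principal curvatures tending to zero, and then a compactness argument (lifts with quasicircle limit sets degenerating to round circles) to see the accumulation set is unchanged -- your convex-hull-width argument is the right idea here, but it should be run with the higher-dimensional references.
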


This is in contrast with the fact that the Hausdorff limit of a sequence of connected closed \emph{totally geodesic} surfaces in a closed hyperbolic $d$-manifold is always a single totally geodesic submanifold when it exists (see Section \ref{section:pants}.)

To construct the surfaces $(S_n)$, we construct sequences $(S^1_n)$ and $(S^2_n)$ of closed, connected, equidistributing surfaces on each of the $N_i$. These surfaces are built out of \emph{good pants} -- pants with large but nearly identical cuff lengths, following Kahn-Markovi\'{c} and Kahn-Wright, as well as Liu-Markovi\'{c} to ensure they are connected. We join the sequences inside each $N_i$ with a surgery and argue they remain asymptotically Fuchsian, using the fact that the totally geodesic surface $N_1\cap N_2$ has a finite cover built out of the same building blocks as $(S^1_n)$ and $(S^2_n)$ (as shown by Kahn-Markovi\'{c} in their proof of the Ehrenpreis conjecture \cite{KME}).

\subsection{Nonexistence of asymptotically geodesic surfaces in infinite volume}

We now let $M$ be a geometrically finite hyperbolic 3-manifold of \emph{infinite volume}. We say a surface $S\se M$ is \emph{essential} if it is $\pi_1$-injective.  

\begin{thm} \label{intro:acylindricalgap}
Let $M$ be a geometrically finite hyperbolic 3-manifold of infinite volume. Then, there is $\eps=\eps(M)>0$ so that any closed essential surface $S\se M$ with principal curvatures at most $\eps$ in absolute value is homotopic to a totally geodesic surface.

Similarly, let $\Gamma < \PSL_2\bC$ be a geometrically finite Kleinian group of infinite covolume. There is $\eps=\eps(\Gamma)>0$ so that any $(1+\eps)$-quasifuchsian surface subgroup of $\Gamma$ is Fuchsian.
\end{thm}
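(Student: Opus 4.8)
The plan is to deduce the first assertion from the second and prove the second by contradiction, via Ratner-type rigidity for the right action of $\PSL_2\bbR$ on the frame bundle of $M$. \emph{Reduction.} If $S\se M$ is a closed essential surface with principal curvatures at most $\eps$ in absolute value, then by Epstein's estimates \cite{epstein1986hyperbolic} the subgroup $\pi_1(S)\leq\Gamma$ is $(1+\theta(\eps))$-quasifuchsian, where $\theta(\eps)\to0$ as $\eps\to0$; and if a surface subgroup $H\leq\Gamma$ is Fuchsian, then the totally geodesic plane $H$ preserves descends to a totally geodesic surface in $M$ homotopic to the given one. So it is enough to produce $\eps(\Gamma)>0$ with the property that every $(1+\eps)$-quasifuchsian surface subgroup of $\Gamma$ is Fuchsian. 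Suppose this fails: there are $(1+\eps_n)$-quasifuchsian surface subgroups $H_n\leq\Gamma$, $\eps_n\to0$, none of which is Fuchsian.

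\emph{Minimal representatives inside the convex core.} For $n$ large, by Uhlenbeck and Seppi \cite{U},\cite{Se} the essential immersed surface in $M$ with fundamental group $H_n$ has a minimal representative $\Sigma_n$ whose principal curvatures tend uniformly to $0$. Being minimal, $\Sigma_n$ lies in the convex hull of its limit set $\Lambda(H_n)\se\Lambda(\Gamma)$, hence in $\mathrm{Hull}(\Lambda(\Gamma))$, so its image lies in the convex core $C(M)$, which has finite volume. Since $\Sigma_n$ is incompressible, $\pi_1(\Sigma_n)=H_n$ cannot be conjugated into the (abelian) fundamental group of a cusp, so $\Sigma_n$ meets the thick part of $C(M)$, which is compact.

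\emph{A limiting invariant measure.} Lift $\Sigma_n$ to a $3$-dimensional submanifold $\hat\Sigma_n$ of the frame bundle $F(M)=\Gamma\backslash\PSL_2\bC$ by taking the frames adapted to $T\Sigma_n$, and let $\mu_n$ be the probability measure on $F(M)$ obtained by normalizing the product of the area measure of $\Sigma_n$ and the rotation measure on the fibers. Because the second fundamental forms of the $\Sigma_n$ tend to $0$, translating $\mu_n$ on the right by a fixed $g\in\PSL_2\bbR\leq\PSL_2\bC$ moves it by $O_g(\eps_n)$; hence any weak-$\ast$ subsequential limit $\mu$ of $(\mu_n)$ is $\PSL_2\bbR$-invariant, and it is a probability measure because the $\Sigma_n$ carry uniformly little area deep inside the cusps of $C(M)$ (by standard convexity/monotonicity estimates for minimal surfaces), so no mass escapes to infinity. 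Since $\Gamma$ is geometrically finite but \emph{not} a lattice, every $\PSL_2\bbR$-invariant ergodic probability measure on $F(M)$ is homogeneous and supported on a closed $\PSL_2\bbR$-orbit, i.e.\ on the adapted frame bundle of a finite-area properly immersed totally geodesic surface in $M$; this is the measure-theoretic counterpart of the rigidity of geodesic planes in geometrically finite hyperbolic $3$-manifolds due to McMullen, Mohammadi and Oh and its extensions, and the alternative that $\mu$ is the invariant measure on all of $F(M)$ is excluded because $F(M)$ has infinite volume. Fix a closed totally geodesic surface $\bar P\se M$ whose adapted frame bundle lies in the support of $\mu$.

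\emph{Trapping and conclusion.} From $\mu_n\to\mu$ weakly, a proportion of $\Sigma_n$ bounded away from $0$ lies in a prescribed small tubular neighborhood $N$ of $\bar P$; since $\bar P$ is totally geodesic and the principal curvatures of $\Sigma_n$ go to $0$, the set of points of $\Sigma_n$ that are $C^1$-close to $\bar P$ is relatively open, and an isoperimetric estimate forbids a small-area piece from leaving $N$ and returning, so by connectedness $\Sigma_n\se N$ once $n$ is large. Passing to a finite cover of $M$ in which $\bar P$ is embedded and two-sided (which does not affect Fuchsianness of $\pi_1(\bar P)$), we have $N\cong\bar P\times(-1,1)$, and the composite $\Sigma_n\hookrightarrow N\to\bar P$ is $\pi_1$-injective since $\Sigma_n\hookrightarrow M$ and $\bar P\hookrightarrow M$ are; hence $H_n=\pi_1(\Sigma_n)$ is a finite-index subgroup of the Fuchsian group $\pi_1(\bar P)$, so it is Fuchsian --- contradicting the choice of $H_n$. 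I expect the two main difficulties to be: first, the rigidity input of the previous paragraph in this generality (arbitrary geometrically finite $\Gamma$, possibly with cusps, together with the non-escape-of-mass bookkeeping); and second, the trapping step --- upgrading the weak-$\ast$ convergence $\mu_n\to\mu$ to the inclusion $\Sigma_n\se N$ and keeping control of fundamental groups, in particular when $\bar P$ has self-intersections.
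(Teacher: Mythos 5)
Your route is genuinely different from the paper's, and the measure-theoretic half of it is salvageable in principle: Ratner's measure classification does hold for an arbitrary discrete $\Gamma$, so an ergodic $\PSL_2\bbR$-invariant probability measure on $\Gamma\backslash\PSL_2\bbC$ is either $G$-invariant (impossible here, infinite volume) or sits on a closed $\PSL_2\bbR$-orbit, i.e.\ a finite-area properly immersed geodesic surface, and Benoist--Oh guarantee there are only finitely many of these in $\core M$. But two points you treat as routine are not. First, non-escape of mass: ``standard convexity/monotonicity estimates'' give lower bounds for the area of a minimal surface in balls, not upper bounds for the area a nearly geodesic closed minimal surface deposits deep in the cusps of $\core M$; in the cusped geometrically finite case this needs an actual argument, and without it your limit $\mu$ could a priori be the zero measure. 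Second, the surface $\bar P$ you extract may be a cusped, merely immersed finite-area surface rather than a closed embedded one, so the finite-cover/product-neighborhood endgame needs more care (separability, and the fact that a closed surface group cannot inject into a free group) — fixable, but not free.

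The genuine gap is the trapping step. Weak-$\ast$ convergence gives at best that the \emph{area fraction} of $\Sigma_n$ whose tangent planes lie near the finite union $E$ of finite-area geodesic surfaces tends to $1$ (and with your stated ``proportion bounded away from $0$'' it gives far less); it does not give $\Sigma_n\se N$. The proposed ``isoperimetric estimate forbids a small-area piece from leaving $N$ and returning'' is unsupported: the complementary piece need not have small absolute area, and worse, a single tangent plane at definite distance $\delta$ from $E$ carries no area at all, so no area estimate can rule it out. The mechanism that actually kills such excursions is geometric, and it is exactly what the paper supplies: if $\Sigma_n$ is not homotopic into $E$ (Lemma \ref{isolate}/Claim \ref{homotope}), it has a tangent plane in the $\delta$-thick part at distance $\ge\delta$ from $E$; by a compactness argument resting on Benoist--Oh's theorem that every geodesic plane not in $E$ leaves the convex core, the geodesic disc of a fixed radius $R(\delta)$ based at that plane exits $\core M$; since the principal curvatures of $\Sigma_n$ tend to $0$, the intrinsic disc of $\Sigma_n$ shadows it and so $\Sigma_n$ exits the core, contradicting the maximum principle for minimal surfaces. (In the special case of a compact core with geodesic boundary the paper instead doubles the core and invokes Corollary \ref{dense}.) Without some version of this disc-shadowing/exit-the-core step — or another argument upgrading ``most tangent planes near $E$'' to ``all tangent planes near $E$'' — your proof does not close, and once you add it the limiting-measure machinery becomes unnecessary.
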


Combined with the fact proven by McMullen-Mohammadi-Oh \cite{mcmullen2017geodesic} and Benoist-Oh \cite{BO} that such $M$ only contain finitely many compact totally geodesic sufaces, this shows that $M$ contains no sequences of distinct closed asymptotically geodesic surfaces. This in contrast with the case when $M$ has finite volume, where the theorems of Kahn-Markovi\'{c} and Kahn-Wright show that there are many sequences of closed asymptotically geodesic surfaces.

On the other hand, when $M$ is also acylindrical, Cooper, Long and Reid show $M$ contains infinitely many maximal essential homotopy classes of surfaces that cannot be homotoped to a connected component of the boundary of the convex core \cite{cooper1997essential}.  Theorem \ref{intro:acylindricalgap} shows they are not $(1+\eps(\Gamma))$-quasifuchsian, except for possibly finitely many that are totally geodesic.

When the convex core of $M$ is compact and has totally geodesic boundary, we are able to prove Theorem \ref{intro:acylindricalgap} by contradiction -- if $M$ had a sequence of closed asymptotically geodesic surfaces, their minimal representatives would all live inside the convex core of $M$, so they would not be dense in the \emph{double} of $M$, violating Corollary \ref{dense}. In the general case, we also give an argument by contradiction, but using the theorem of Benoist-Oh (following McMullen-Mohammadi-Oh) that there are only finitely many compact totally geodesic surfaces in the convex core of $M$.

For $M$ as above, let $\eps(M)$ be the supremum over all $\eps>0$ such that $(1+\eps)$-quasifuchsian essential surfaces of $M$ are Fuchsian. We have shown $\eps(M)>0$, but we can still ask

\begin{ques} \footnote{This question was suggested by Alan Reid.}
Is it possible to estimate $\eps(M)$ in terms of geometric or topological properties $M$? Is the infimum of $\eps(M)$ over the deformation space of $M$ positive?
\end{ques}

As discussed above, finite volume hyperbolic 3-manifolds contain a great many sequences of closed non-totally geodesic hypersurfaces with principal curvatures tending to 0.  It is unknown whether finite volume higher dimensional hyperbolic manifolds are more like infinite volume hyperbolic 3-manifolds or finite volume hyperbolic 3-manifolds in this regard.  

\begin{ques}
Let $M$ be a finite volume hyperbolic $d$-manifold of dimension $d\geq 4$. Given $\eps>0$, does $M$ contain a closed essential hypersurface with principal curvatures at most $\eps$ in absolute value that is not homotopic to a totally geodesic hypersurface? Is there any such $M$ that does? 
\end{ques}

\subsection{Rigidity of asymptotically geodesic surfaces in negative curvature}

We are interested in variations on the following question. Corollary \ref{dense} will play an important role in the results we obtain.

\begin{ques}\label{rigidityq}
Let $(M,g)$ be a closed negatively curved 3-manifold containing a sequence of distinct asymptotically geodesic surfaces. Must $(M,g)$ have constant curvature?
\end{ques}

With a strong topological assumption on the surfaces, we are able to give a positive answer.  Recall that by geometrization every closed negatively curved 3-manifold carries a hyperbolic metric, which by Mostow rigidity is unique up to isometry.  

\begin{thm} \label{asymprigidity}
Let $(M,g)$ be a closed negatively curved 3-manifold and $(\Sigma_n)\se (M,g)$ be a sequence of asymptotically geodesic surfaces.

Suppose there is a diffeomorphism $\phi:M\to M$ so that $S_n:= \phi(\Sigma_n)$ form a sequence of asymptotically geodesic surfaces for the hyperbolic metric $g_{hyp}$ on $M$.  Then, $g$ is isometric to $g_{hyp}$.
\end{thm}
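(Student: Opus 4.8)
The plan is to use the diffeomorphism $\phi$ only to transport the density statement of Corollary~\ref{dense} to $(M,g)$, then to run a compactness argument that manufactures honest totally geodesic surfaces for $g$ through a dense set of tangent $2$-planes, and finally to invoke the classical rigidity of metrics admitting many totally geodesic surfaces together with Mostow. First I would note that the $S_n=\phi(\Sigma_n)$, being asymptotically geodesic hence asymptotically Fuchsian \cite{epstein1986hyperbolic} for $g_{hyp}$, are homotopic for large $n$ to the unique $g_{hyp}$-minimal surfaces $S_n^\ast$ in their homotopy classes, which are themselves asymptotically geodesic \cite{U},\cite{Se} and pairwise distinct; by Corollary~\ref{dense}, $\{S_n^\ast\}$ is asymptotically dense in $\Gr M$. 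Since an asymptotically geodesic surface in $\bH^3$ lies, together with its tangent planes, within Hausdorff distance $o(1)$ of the totally geodesic plane bounding its (almost round) limit set, and $S_n,S_n^\ast$ share a limit set, $\{S_n\}$ is asymptotically dense in $\Gr M$ as well (alternatively, the proof of Corollary~\ref{dense} applies directly to any sequence of distinct closed asymptotically geodesic surfaces). As $\Gr M$, the bundle of tangent $2$-planes, is a smooth manifold independent of the metric, $\phi$ acts on it by a diffeomorphism, and asymptotic density of a family of surfaces in $\Gr M$ is a purely topological condition; hence $\{T\Sigma_n\}=\{\phi^{-1}(TS_n)\}$ is asymptotically dense in $\Gr M$, while the $\Sigma_n$ remain asymptotically geodesic for $g$.

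Next I would extract totally geodesic surfaces for $g$. Fix $(p,P)\in\Gr M$; by the density just established there are $n_k\to\infty$ and $p_k\in\Sigma_{n_k}$ with $(p_k,T_{p_k}\Sigma_{n_k})\to(p,P)$ and $\|II_{\Sigma_{n_k}}\|_{L^\infty}\to 0$, so in particular the mean curvatures tend to $0$ in $L^\infty$. Working in $g$-normal coordinates at $p$ inside a ball of radius below $\mathrm{inj}(M,g)$, and choosing a small $\delta$ uniform over $M$ by compactness, the smallness of $II_{\Sigma_{n_k}}$ and the bounded geometry of $(M,g)$ force the sheet of $\Sigma_{n_k}$ through $p_k$ over the disk $D_\delta\se P$ to be, for $k$ large, the graph of a function $u_k\colon D_\delta\to P^\perp$ with $\|u_k\|_{C^1}=O(\delta)$. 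Each $u_k$ solves the prescribed–mean–curvature equation with right-hand side bounded in $L^\infty$ uniformly in $k$, so standard interior elliptic estimates give uniform $C^{1,\alpha}$ and $W^{2,p}$ bounds on $D_{\delta/2}$; along a subsequence $u_k\to u_\infty$ in $C^{1,\alpha}$ and $\nabla^2 u_k\rightharpoonup\nabla^2 u_\infty$ in $L^p$. Because the second fundamental form of a graph has the form $A(x,\nabla u)\,\nabla^2 u+B(x,u,\nabla u)$ with $A,B$ continuous, $II(u_k)\rightharpoonup II(u_\infty)$ in $L^p$, and weak lower semicontinuity gives $\|II(u_\infty)\|_{L^p}\le\liminf_k\|II_{\Sigma_{n_k}}\|_{L^\infty}\,|D_{\delta/2}|^{1/p}=0$. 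Hence $u_\infty$ solves the elliptic system $II(u_\infty)\equiv 0$, is smooth by elliptic regularity, and $\exp_p(\mathrm{graph}\,u_\infty)$ is a totally geodesic disk of $(M,g)$ through $p$ tangent to $P$. Since $(p,P)$ was arbitrary, $(M,g)$ carries a totally geodesic surface through every point tangent to every $2$-plane.

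Finally I would apply the rigidity. For a totally geodesic $\Sigma\se(M,g)$ one has $II\equiv 0$, so its covariant derivative vanishes and the Codazzi equation yields $(R^g(X,Y)Z)^\perp=0$ for all $X,Y,Z$ tangent to $\Sigma$; evaluating at $p$ for the surface tangent to $P$ shows that every $2$-plane at every point of $M$ is curvature-invariant, i.e. $R^g_p(X,Y)Z\in P$ whenever $X,Y,Z\in P$. A standard computation with the curvature tensor — applying curvature-invariance of $\Span(X,Y)$, $\Span(X,Z)$ and $\Span\big(X,\tfrac{Y+Z}{\sqrt2}\big)$ to an orthonormal triple — then forces $K_g(X,Y)=K_g(X,Z)$, so the sectional curvature is isotropic at each point; Schur's lemma (dimension $3$, $M$ connected) upgrades this to a globally constant curvature, necessarily negative. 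Normalizing so the curvature is $-1$, $(M,g)$ is a closed hyperbolic $3$-manifold, hence isometric to $(M,g_{hyp})$ by Mostow rigidity.

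The main obstacle I expect is the compactness in the second step: extracting a uniform graphical scale from the bound on $II$, obtaining uniform elliptic estimates from the bounded mean curvature, and — most importantly — using weak lower semicontinuity of $\|II\|$ rather than the minimal surface equation (which by itself would only produce a minimal limit) to ensure the limiting disk is genuinely totally geodesic. A minor secondary point is the comparison, in the first step, of an asymptotically geodesic surface with its minimal representative, needed to apply Corollary~\ref{dense} in the form stated.
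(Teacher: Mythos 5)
Your proposal is correct in substance, but it takes a genuinely different route from the paper's proof in its main step. The paper never transfers tangent-plane density through $\phi$: it passes to the minimal representatives $\Sigma_n'$ of the homotopy classes in $(M,g_{hyp})$, uses Corollary \ref{dense} to find tangent planes of the $\Sigma_n'$ converging to a plane containing a vector with dense geodesic-flow orbit, extracts a limiting totally geodesic surface $\Sigma'$ in $(M,g_{hyp})$ and, by a Calegari--Marques--Neves style compactness applied to lifts of the original $\Sigma_n$, a limiting totally geodesic surface $\Sigma$ in $(M,g)$; it then needs the orbit equivalence between the two geodesic flows to transport the dense-orbit property from $\Sigma'$ to $\Sigma$, and Brin's ergodicity of the frame flow to upgrade this to a single totally geodesic surface dense in $\Gr_2(M,g)$, before invoking Cartan. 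You instead exploit the full strength of the stated hypothesis -- $\phi$ is a diffeomorphism and the $S_n=\phi(\Sigma_n)$ themselves are asymptotically geodesic -- so that asymptotic density of tangent planes, a purely topological condition on $\Gr M$, pulls back through the induced diffeomorphism of $\Gr M$; you then produce a totally geodesic germ through every $2$-plane of $(M,g)$ by a local graphical/elliptic compactness argument driven by $\|II\|_{L^\infty}\to 0$ (your emphasis on weak lower semicontinuity of $\|II\|$ rather than minimality is exactly the right point: a minimal limit would not suffice), and finish with Cartan, Schur and Mostow. What your route buys is a substantially more elementary second half, with no geodesic orbit equivalence and no frame-flow ergodicity; what the paper's route buys is that it uses only homotopy-level information (it would go through with $\phi$ merely a homotopy equivalence and the $\phi(\Sigma_n)$ only homotopic to asymptotically geodesic surfaces), which is the form of argument the paper reuses for Theorem \ref{intro:totallyumbilic}.

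Two caveats, both shared with the paper rather than specific to you. Your parenthetical claim that the proof of Corollary \ref{dense} applies to \emph{any} sequence of distinct closed asymptotically geodesic surfaces is too strong: equidistant surfaces to a fixed closed totally geodesic surface are distinct, closed and asymptotically geodesic but accumulate only on that surface; the correct statement retains the caveat of Theorem \ref{contra} that the members failing to be dense are (homotopic to covers of) finitely many totally geodesic surfaces. Correspondingly, your assertion that the minimal representatives $S_n^{\ast}$ are pairwise distinct requires the same implicit non-degeneracy assumption the paper invokes when it says that it is not the case that all but finitely many of the surfaces cover a fixed surface; your argument, like the paper's, should be read modulo that assumption. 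Finally, as in the paper, the argument yields constant negative curvature, and the identification of $g$ with $g_{hyp}$ (rather than a rescaling of it) is handled by the same normalization gloss the paper uses.
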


To prove this, we show that $\Gr M$ has a dense totally geodesic plane for the metric $g$, and by a classical result of Cartan we are able to conclude $g$ is hyperbolic. To do so, we use the fact that the surfaces $S_n$ are asymptotically dense in $\Gr M$, per Corollary \ref{dense}.  We are then able to show that the corresponding surfaces in the metric $g$ are asymptotically dense by following ideas from the article of Calegari-Marques-Neves \cite{calegari2022counting}, which proved a rigidity result for negatively curved closed 3-manifolds in terms of the growth rate of essential minimal surfaces, counted by area.

Question \ref{rigidityq} is not fully settled even when we assume $(M,g)$ contains a sequence of distinct \emph{totally geodesic} surfaces. In that setting, however, there has been much recent progress: Filip-Fisher-Lowe \cite{filip2024finiteness} answer it positively when the metric $g$ is analytic, and by the work of Bader-Fisher-Miller-Stover \cite{BFMS1} and Margulis-Mohammadi \cite{MohammadiMargulis2022_Arithmeticity-of-hyperbolic-3-manifolds-containing-infinitely-many-totally}, one concludes that $(M,g)$ is in fact an arithmetic hyperbolic manifold.

In all of these works, the fact that the distinct totally geodesic surfaces are asymptotically dense in $\Gr M$ (in fact, they equidistribute) plays a major role. A natural strengthening of the question is whether one can obtain rigidity theorems knowing only about finitely many or even a single surface. For example, one can ask whether $(M,g)$ containing a closed sufficiently large geodesic hyperbolic surface  is enough to force $g$ to be hyperbolic. Without any additional conditions, it is easy to show there can be no rigidity by perturbing the metric in a small ball. However, imposing an upper bound on the curvature we are able to prove the following theorem.  

\begin{thm}\label{bigsurface}
Let $(M,g_{hyp})$ be a closed hyperbolic 3-manifold. Then there is $\epsilon>0$ and a finite collection of closed totally geodesic surfaces so that if $S$ is $(1+\epsilon)$-quasi-Fuchsian surface that does not belong to the finite collection of totally geodesic surfaces, the following holds.  

Assume $g$ is a metric on $M$ with sectional curvature bounded above by -1 so that
\begin{itemize}
\item $\Sigma$ is a totally geodesic hyperbolic surface in $(M,g)$

\item There is a homotopy equivalence $\phi:M\to M$ so $\phi(\Sigma) = S$.
\end{itemize}
Then, $g$ is isometric to $g_{hyp}$ and $S$ is homotopic to a totally geodesic surface in $(M,g_{hyp})$.
\end{thm}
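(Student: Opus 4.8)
The plan is to show that $g$ has constant sectional curvature; since $g$ has curvature bounded above by $-1$ on the closed manifold $M$, which carries the hyperbolic metric $g_{hyp}$, it will then follow from Mostow rigidity that $g$ is isometric to $g_{hyp}$ (the constant must be $-1$: as $\Sigma$ is totally geodesic and intrinsically hyperbolic, the Gauss equation forces $\mathrm{sec}_g$ to equal $-1$ along $T\Sigma$, so $-1$ is attained). Constant curvature will be extracted from the classical criterion of Cartan that $g$ has constant curvature once a dense set of $2$-planes in $\Gr(M,g)$ is tangent to a totally geodesic surface. It is convenient to rephrase this: call $(p,P)\in\Gr M$ \emph{admissible} if the $2$-vector of $P$ is an eigenvector of the curvature operator $\mathcal R_p$ (equivalently, a germ of totally geodesic surface tangent to $P$ at $p$ exists); Cartan's statement is that $g$ has constant curvature as soon as the admissible locus is dense. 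The hope, and the point at which the bound $\mathrm{sec}_g\le -1$ enters, is that when this bound holds it is enough that the admissible locus be $\delta$-dense for a fixed small $\delta$, since the \emph{non}-admissible locus is then cut out by an algebraic condition of controlled complexity on $\mathcal R_p+\mathrm{Id}$.

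First I transfer density from the hyperbolic side. Cover $\Gr(M,g_{hyp})$ by finitely many balls $B_1,\dots,B_k$ of radius $\delta/2$. Applying Theorem \ref{contra} to each $B_i$ gives $\epsilon_i>0$ and finitely many totally geodesic surfaces; set $\epsilon=\min_i\epsilon_i$ and let $\mathcal F$ be the union of all these totally geodesic surfaces. Any $(1+\epsilon)$-quasifuchsian surface not in $\mathcal F$ then meets every $B_i$, so its unique minimal representative (Uhlenbeck--Seppi \cite{U,Se}, whose principal curvatures are $O(\epsilon)$) is $\delta$-dense in $\Gr(M,g_{hyp})$; by Corollary \ref{epsilondensecorollary} we may make $\delta$ as small as we please by shrinking $\epsilon$. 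In particular our $S\notin\mathcal F$ has a $\delta$-dense minimal representative $S_0$ in $(M,g_{hyp})$.

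Next I transfer this to $(M,g)$, following the ideas of Calegari--Marques--Neves \cite{calegari2022counting} used in the proof of Theorem \ref{asymprigidity}. The surface subgroup $H=\pi_1(\Sigma)=\phi_*^{-1}\pi_1(S)\le\pi_1(M)$ is represented in $(M,g_{hyp})$ by the $\delta$-dense minimal surface $S_0$ and in $(M,g)$ by the totally geodesic surface $\Sigma$. Comparing the two representatives via the area and equidistribution estimates of \cite{calegari2022counting} — using that curvature $\le -1$ pins $\mathrm{Area}_g(\Sigma)$ to $2\pi|\chi(\Sigma)|$, which is the \emph{maximal} possible area of a minimal surface in that class, and controls how $\Sigma$ spreads out — one concludes that $\Sigma$ is $\delta'$-dense in $\Gr(M,g)$ with $\delta'\to0$ as $\delta\to0$. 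Since $\Sigma$ is totally geodesic, every tangent plane of $\Sigma$ is admissible, so the admissible locus of $(M,g)$ is $\delta'$-dense; choosing $\epsilon$ (hence $\delta$, hence $\delta'$) small enough, Cartan's criterion forces $g$ to have constant curvature, and $g$ is isometric to $g_{hyp}$ as explained above. Finally, by Mostow rigidity the homotopy equivalence $\phi$ is homotopic to an isometry $\psi\colon(M,g)\to(M,g_{hyp})$, so $\psi(\Sigma)$ is a totally geodesic surface of $(M,g_{hyp})$ homotopic to $\phi(\Sigma)=S$, which is the second conclusion.

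The main obstacle is the transfer step: quantitatively controlling how the totally geodesic surface $\Sigma$ distributes in $\Gr(M,g)$ in terms of how the minimal representative of the same surface group distributes in $\Gr(M,g_{hyp})$, and — inside the Cartan step — verifying that $\delta'$-density of the admissible locus together with $\mathrm{sec}_g\le -1$ really forces exact constant curvature rather than mere pinching. This is precisely where the \emph{upper} curvature bound is indispensable, rather than negative curvature alone, since without it the relevant continuity estimates for $\mathcal R$ are not uniform and the Cartan-type conclusion degrades to pinching.
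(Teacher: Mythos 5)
Your first step (using Theorem \ref{contra} and Corollary \ref{epsilondensecorollary} to exclude finitely many totally geodesic surfaces and get a $\delta$-dense minimal representative of $S$ in $\Gr(M,g_{hyp})$) is the same input the paper uses. But the two steps that carry all the weight are gaps, and you acknowledge as much. First, the transfer step: you assert that $\delta$-density of the minimal representative $S_0$ in $\Gr(M,g_{hyp})$ forces the totally geodesic surface $\Sigma$ to be $\delta'$-dense in $\Gr(M,g)$ ``via the area and equidistribution estimates of \cite{calegari2022counting}.'' Those arguments are limiting arguments for \emph{sequences} of surfaces; for a single surface and a metric that is only assumed to satisfy $\mathrm{sec}_g\le -1$ (no lower bound, no pinching), the identification of the two Grassmann bundles is only through a homotopy equivalence, and density is not a homotopy-invariant or coarsely stable notion, so knowing $\mathrm{Area}_g(\Sigma)=2\pi|\chi(\Sigma)|$ does not control how $\Sigma$ sits in $\Gr(M,g)$. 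This is exactly why the paper does not transfer density directly: following \cite{lowe2023rigidity}, it converts density of $S$ into the (strong) \emph{well-distribution property} --- every point of $\widetilde M$ is enclosed in a cube whose faces lie on lifts of the surface --- a property phrased in terms of the configuration of limit sets at infinity, hence robust under the homotopy equivalence $\phi$; \cite{lowe2023rigidity}[Section 6.2] then shows $\Sigma$ inherits it.

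Second, the rigidity step. Cartan's theorem needs a totally geodesic surface tangent to every $2$-plane (in the paper's other argument, Theorem \ref{asymprigidity}, this is supplied by an actually \emph{dense} totally geodesic surface, whose tangent-plane limits are again tangent to totally geodesic surfaces). Your substitute has two problems: (i) the reformulation of ``admissible'' as ``the $2$-plane is an eigenplane of the curvature operator'' is not equivalent to the existence of a totally geodesic germ --- curvature invariance of the plane is necessary but far from sufficient, since total geodesy is a second-order condition that generically admits no solutions; and (ii) the claimed ``approximate Cartan'' statement, that $\delta'$-density of admissible planes together with $\mathrm{sec}_g\le -1$ forces exactly constant curvature, is not proved and is not a known result; you flag it yourself as the main obstacle. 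The paper avoids this entirely by invoking \cite{lowe2023rigidity}[Theorem 1.1]: a closed manifold with $\mathrm{sec}\le -1$ containing a \emph{well-distributed} totally geodesic hyperbolic surface has constant curvature $-1$. Your concluding use of Mostow rigidity (to identify $g$ with $g_{hyp}$ and to homotope $\phi$ to an isometry, giving the totally geodesic representative of $S$) is fine and matches the paper, but the proof as proposed does not close without supplying the two missing mechanisms above.
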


The second author proved this result for all but finitely many totally geodesic surfaces in $(M,g_{hyp})$, and building on work by V. Lima gave examples to show that it was necessary to exclude finitely many totally geodesic surfaces \cite{lowe2023rigidity}, \cite{lima2019area}. By the work described above we can only have infinitely many distinct closed totally geodesic $S$ when $M$ is arithmetic, which is a very strong condition on a hyperbolic 3-manifold. However, every closed hyperbolic 3-manifold contains many sequences of distinct asymptotically geodesic surfaces, from the Kahn-Markovi\'{c} surface subgroup theorem.  Therefore by the previous theorem every closed hyperbolic 3-manifold contains many essential surfaces that are not homotopic to totally geodesic hyperbolic surfaces in any metric with sectional curvature at most $-1$.  

The second author's original result was proved by arguing that the surface $\Sigma$ must be \emph{well-distributed}, meaning that any point in the universal cover $\tilde{M}$ is enclosed in a cube whose faces are lifts of $\Sigma$ to $\tilde{M}$ (compare with the notion of a filling surface recently introduced by X.H. Han \cite{hans}.) Then he shows that the existence of a well-distributed surface $\Sigma\se M$ implies the metric $g$ must be hyperbolic. To show $\Sigma$ is well-distributed, he uses the fact that the $S_k$ are asymptotically dense in $\Gr_2 M$, from the work of Mozes-Shah \cite{MS}. Corollary \ref{dense} implies the $S_k$ are asymptotically dense in $\Gr_2 M$ even if they are only asymptotically geodesic, so we are able to follow Lowe's original proof and obtain Theorem \ref{bigsurface}.

Another natural question in this direction is: 

\begin{ques}
Are the hypotheses of Theorem \ref{bigsurface} sufficient to guarantee that $(M,g_{hyp})$ is arithmetic, and so has infinitely many distinct finite area geodesic surfaces?  If a closed hyperbolic 3-manifold contains a closed well-distributed totally geodesic surface, then must it be arithmetic?  
\end{ques}

\subsection{Rigidity of totally umbilic surfaces in negative curvature}

We say a surface is \emph{totally umbilic} if all of its principal curvatures have the same positive constant value at every point. They are to constant mean curvature surfaces as minimal surfaces are to totally geodesic surfaces. A final variation on Question \ref{rigidityq} we consider is

\begin{ques}\label{intro:questiontotallyumbilic}
Let $(M,g)$ be a Riemannian 3-manifold with sectional curvature bounded above by $-c<0$ containing infinitely many distinct totally umbilic surfaces with mean curvature smaller than $c$. Must $(M,g)$ be hyperbolic?
\end{ques}

If the mean curvature of the totally umbilic surfaces is allowed to be large, then one does not expect a rigidity statement due to the fact that metric spheres in hyperbolic space are totally umbilic.  It suffices to take a hyperbolic metric and perturb it in a small neighborhood to obtain a variably negatively curved manifold with infinitely many totally umbilic hypersurfaces.  The condition that the mean curvature be smaller than $c$ is a natural choice because for every $\epsilon>0$ there are closed negatively curved 3-manifolds with sectional curvature non-constant and at most $-c$, and that contain infinitely many totally umbilic surfaces with mean curvature at most  $c + \eps$. These can be constructed from closed hyperbolic 3-manifolds with arbitrarily large injectivity radius, by perturbing the metric slightly outside of a large injectivity ball and then rescaling the metric  depending on the value of $c$.

We are able to prove the following theorem, which is analogous to Theorem \ref{asymprigidity}, and which we view as evidence for a positive answer to Question \ref{intro:questiontotallyumbilic} in dimension three. 

\begin{thm} \label{intro:totallyumbilic}
Suppose that $(M,g)$ is a closed negatively curved 3-manifold, and that it contains infinitely many distinct totally umbilic surfaces $\Sigma_n$ as in Question \ref{intro:questiontotallyumbilic}.  Suppose there is a homotopy equivalence $h:M\to M$ so that the $S_n:= h(\Sigma_n)$ are homotopic to a sequence of asymptotically totally umbilic surfaces for a hyperbolic metric $g_{hyp}$ on $M$.  Then $(M,g)$ must have constant curvature.

\end{thm}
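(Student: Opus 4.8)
The plan is to follow the scheme of Theorem~\ref{asymprigidity}, substituting for its appeal to Cartan's rigidity theorem for totally geodesic planes the corresponding statement for totally umbilic surfaces with constant principal curvature. After rescaling we may assume $c=1$, so $(M,g)$ has sectional curvature $\le-1$, each $\Sigma_n$ is totally umbilic with constant principal curvature $\lambda_n\in[0,1)$, and $g_{hyp}$ has curvature $\equiv-1$; passing to a subsequence, $\lambda_n\to\lambda_\infty$, and we may assume the classes $[\Sigma_n]$ are pairwise distinct (the remaining case is discussed at the end). There are two main tasks: (i) to show the tangent planes of the $\Sigma_n$ are asymptotically dense in $\Gr(M,g)$, and (ii) to deduce constant curvature from (i). For (ii): given $(p,V)\in\Gr(M,g)$ choose $(p_k,V_k)\to(p,V)$ realised as tangent planes of surfaces $\Sigma_{n_k}$; since the $\Sigma_n$ have uniformly bounded second fundamental form ($\lambda_n$ bounded) and, by the Gauss equation, uniformly pinched negative induced curvature, the standard compactness argument for constant-mean-curvature immersions yields from suitable lifts to $\wt{M}$ a $C^\infty_{loc}$ subsequential limit $U_{p,V}$: a complete immersed surface through $p$ tangent to $V$, totally umbilic with constant principal curvature $\lambda_\infty$. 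Its shape operator $A=\lambda_\infty\,\Id$ is parallel, so the Codazzi equation $(\nabla_XA)Y-(\nabla_YA)X=(R(X,Y)\nu)^\top$ forces $(R(X,Y)\nu)^\top=0$ for all $X,Y$ tangent to $U_{p,V}$ --- exactly the curvature identity used in Cartan's theorem. As $(p,V)$ ranges over all of $\Gr(M,g)$, the curvature operator of $g$ is therefore a scalar at each point, and by Schur's lemma $(M,g)$ has constant negative curvature.

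For (i) I would first prove asymptotic density on the hyperbolic side and then transport it through $h$. Let $S_n'\simeq S_n=h(\Sigma_n)$ be the asymptotically totally umbilic surfaces for $g_{hyp}$; their principal curvatures converge uniformly to a constant $\mu_\infty$, and since asymptotically totally umbilic surfaces are asymptotically Fuchsian --- the umbilic analogue of the correspondence of \cite{epstein1986hyperbolic} already used here --- one has $\mu_\infty<1$ and $\pi_1(S_n')$ is $K_n$-quasifuchsian with $K_n\to1$. By \cite{U,Se} each $S_n'$ is homotopic to a $(1+\eps_n)$-quasifuchsian minimal surface $\hat S_n$ with $\eps_n\to0$, the $\hat S_n$ are distinct (as $[\hat S_n]=[S_n']=h_*[\Sigma_n]$ are distinct), so by Corollary~\ref{dense} they are asymptotically dense in $\Gr(M,g_{hyp})$. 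Now a totally umbilic surface with constant principal curvature $\mu<1$ in $\bH^3$ is precisely the surface equidistant at distance $\arctanh\mu$ from a totally geodesic plane, and under the identification of the bundle of cooriented $2$-planes with the unit tangent bundle $\UT M$ (a cooriented plane $\leftrightarrow$ its positive normal) the passage from a plane to that equidistant surface is exactly the time-$r$ geodesic flow $g^r$. An Epstein-type comparison then shows the image of $S_n'$ in $\UT M$ lies within Hausdorff distance tending to $0$ of $g^{r_n}(A_n)$, where $A_n$ is the image of $\hat S_n$ and $r_n\to\arctanh\mu_\infty$; since the geodesic flow is continuous and asymptotic density is preserved both by homeomorphisms of the compact manifold $\UT M$ and by perturbations tending to zero in Hausdorff distance, $(S_n')$ is asymptotically dense in $\Gr(M,g_{hyp})$.

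The remaining and hardest step is to transport this density through the homotopy equivalence $h$ to the totally umbilic surfaces $\Sigma_n\subset(M,g)$ themselves, for which I would follow the method of Calegari--Marques--Neves and Lowe \cite{calegari2022counting,lowe2023rigidity} used in Theorem~\ref{asymprigidity}: via the $\pi_1M$-equivariant boundary map between $\partial_\infty\wt{M}_g$ and $\partial_\infty\bH^3$, if the tangent planes of $\Sigma_n$ avoided a fixed open set of $\Gr(M,g)$ then the limit set of $\wt{\Sigma}_n$, and hence (after the boundary map) that of $\wt{S}_n'$, would avoid a corresponding region, contradicting the density just established. The principal difficulty --- and why this is genuinely harder than in the asymptotically geodesic case --- is that in variable curvature an exactly totally umbilic surface is \emph{not} a controlled normal pushoff of a nearly totally geodesic surface: the Riccati comparison is a strict inequality whose overshoot is governed by the very curvature variation one is trying to rule out, so the clean geodesic-flow conjugation available on the hyperbolic side is unavailable, and the area/current comparison must be run directly for the constant-mean-curvature surfaces $\Sigma_n$, using that each is, up to a controlled ambiguity, the unique totally umbilic surface of its mean curvature in its homotopy class and has uniformly bounded geometry. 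Finally, if infinitely many $[\Sigma_n]$ coincide the $S_n'$ are equidistant surfaces of a single plane and need not be asymptotically dense; in that case the $\lambda_n$ must be pairwise distinct (two homotopic totally umbilic surfaces of equal mean curvature coincide by the maximum principle), so their limit is a local foliation of $(M,g)$ by constant-mean-curvature surfaces, from which constant curvature follows by an isoparametric-type rigidity argument.
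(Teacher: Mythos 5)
There is a genuine gap, and it sits exactly where you flag it yourself. Your step (ii) (compactness plus Codazzi/Schur, in effect the Leung--Nomizu ``axiom of umbilic planes'' theorem \cite{leung1971axiom} that the paper invokes) and your hyperbolic-side density argument are fine in outline, but the transport of information from $(M,g_{hyp})$ to $(M,g)$ is not an argument: you correctly observe that the geodesic-flow conjugacy used for Theorem \ref{asymprigidity} has no analogue for umbilic surfaces in variable curvature, and that a Calegari--Marques--Neves style limit-set comparison does not obviously control the tangent planes of the exactly umbilic $\Sigma_n$, but your proposed remedy (``run the area/current comparison directly for the $\Sigma_n$, using uniqueness in the homotopy class and bounded geometry'') is a statement of intent rather than a proof. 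The missing idea is the Labourie--Smith theory of $k$-surfaces: totally umbilic surfaces are $k$-surfaces, the lamination of pointed $k$-surfaces is independent of the negatively curved metric via a leaf-preserving homeomorphism $\Phi_{\mathcal{L}}$ compatible with the homotopy equivalence and with smooth convergence of lifts (Propositions \ref{prop-phi1} and \ref{prop-phi2}), and by Alvarez--Lowe--Smith \cite{alvarez2022foliated} the unit tangent bundles carry foliations by $k$-surfaces intertwined by a homeomorphism $\Phi:\UT(M)\to\UT(N)$ (Theorem \ref{thm:ksurfaceconj}). This is precisely the substitute for the geodesic-flow conjugation whose absence you identify, and without it (or something equally strong) your plan does not close.

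Note also that the paper's route differs structurally from yours in a way that avoids your hardest step: it never proves that the tangent planes of the $\Sigma_n$ are asymptotically dense in $\Gr(M,g)$. Instead it produces a single dense totally umbilic plane $\hat{\Sigma}$ in $(M,g_{hyp})$ (a constant-distance surface over a dense geodesic plane obtained as a limit of the asymptotically geodesic representatives), pushes it through $\Phi$ to get a single dense $k$-surface $\Sigma\subset(M,g)$ with $k=H^2$, and then shows $\Sigma$ is totally umbilic by exhibiting its tangent planes as limits of tangent planes of the $\Sigma_n$ via $\Phi_{\mathcal{L}}$; the case of varying mean curvatures $H_n$ is handled by comparing $\Sigma_n$ with the unique $H^2$-surface in its homotopy class and using uniqueness of $k$-surfaces with prescribed asymptotic boundary (\cite{labourie1999lemme}, \cite{smith2021asymptotic}, \cite{smith2024quaternions}) to force the comparison to be tight. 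Your closing fallback (a ``local foliation by CMC surfaces plus isoparametric-type rigidity'' when homotopy classes coincide) is likewise unsubstantiated, but the decisive defect is the absent $k$-surface (or equivalent) mechanism for crossing between the two metrics.
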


Totally umbilic surfaces lack the connection to the geodesic flow that totally geodesic surfaces have, and we thus require a different argument from Theorem \ref{asymprigidity}.  The proof of Theorem \ref{intro:totallyumbilic} instead uses the Labourie-Smith theory of \textit{k-surfaces}, or surfaces for which the product of the principal curvaures, or extrinsic curvature, is constant.  Labourie introduced k-surfaces as a higher dimensional analogue of the geodesic flow and proved a number of deep results in this direction \cite{labourie1999lemme}, \cite{labourie2005random} (see \cite{labourie2002phase} for a survey.)  Their strong rigidity and uniqueness properties allow us to move information from constant to variable negative curvature and thereby obtain the control on the $S_n$ we need to prove Theorem \ref{intro:totallyumbilic}.

\subsection{Beyond asymptotically Fuchsian}

Very little is understood of the behavior of sequences of quasifuchsian minimal surfaces that fail to be asymptotically  geodesic, although recent work by Rao constructed many such examples \cite{rao2023subgroups}.  As a first step in this direction, we prove the following theorem in Section \ref{section:notgeodesic}.  

\begin{thm} \label{lastthmintro}
There are asymptotically dense sequences of closed quasifuchsian minimal or pleated surfaces that are not asymptotically Fuchsian. 
\end{thm}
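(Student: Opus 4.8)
Here is a proposal.

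The plan is to run the Kahn--Markovi\'{c}--Kahn--Wright good pants construction inside a fixed closed hyperbolic 3-manifold $M=\Gamma\backslash\bH^3$, but to glue one seam of the assembled surface with a \emph{definite} bend rather than nearly flatly. Fix $\theta_0\in(0,\tfrac14)$. For each large $R$, mixing of the frame flow lets one find a closed geodesic $c_R\se M$ of length within $1$ of $2R$ together with two good pants $P_1,P_2$ sharing the cuff $c_R$ and glued along it so that the bend of the resulting pleated collar is $\approx\theta_0$ (rather than $\approx 0$). Write $\cV_R:=P_1\cup_{c_R}P_2$, a ``bent'' building block of uniformly bounded geometry with four free cuffs of length $\approx 2R$.

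For each $n$ one then assembles, using the flexibility of the good pants machinery together with Liu--Markovi\'{c} for connectedness, a closed connected pleated surface $\hat\Sigma_n\se M$ built from one copy of $\cV_{R_n}$, with $R_n\to\infty$, together with good pants at scale $R_n$, so that every seam other than $c_{R_n}$ is $\eps_n$-good with $\eps_n\to0$. Since $c_{R_n}$ is a curve of the pants decomposition of $\hat\Sigma_n$ it is simple on $\hat\Sigma_n$, so the bend across it is \emph{coherent} and plays the role of a bending deformation along a simple closed curve. The central step is to extend the Kahn--Markovi\'{c} local-to-global $\pi_1$-injectivity criterion to this configuration: away from $c_{R_n}$ the surface is $\eps_n$-good so their quasigeodesic estimate applies there, while the $\theta_0$-turns a surface geodesic incurs at the (coherently bent) full preimage of $c_{R_n}$ do not accumulate, exactly as in the classical fact that bending along a simple closed geodesic preserves quasifuchsianness (Epstein--Marden--Markovi\'{c}, Kourouniotis). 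One concludes that $\widetilde{\hat\Sigma}_n\to\bH^3$ is a $(K,C)$-quasi-isometric embedding with $K,C$ independent of $n$, hence $\pi_1(\hat\Sigma_n)\le\Gamma$ is quasifuchsian; the $\hat\Sigma_n$ are pairwise distinct since their genera tend to infinity.

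It then remains to verify the two required properties. The convex-core boundary of $\bH^3/\pi_1(\hat\Sigma_n)$ carries an atom of weight $\approx\theta_0$ in its bending measure along $c_{R_n}$, and since a $(1+\eta)$-quasifuchsian group has convex-core boundary of small total bending mass when $\eta$ is small, this forces $K(\hat\Sigma_n)\ge K_0(\theta_0)>1$ for all $n$; so $(\hat\Sigma_n)$ is not asymptotically Fuchsian. On the other hand, deleting a neighborhood of $\cV_{R_n}$ from $\hat\Sigma_n$ leaves a good pants surface that is $\delta(R_n)$-dense in the $\delta(R_n)$-thick part of $\Gr M$ with $\delta(R_n)\to0$, since good pants at scale $R$ equidistribute as $R\to\infty$ (this is the mechanism behind Corollary~\ref{epsilondensecorollary}); hence $(\hat\Sigma_n)$ is asymptotically dense in $\Gr M$, which settles the pleated case. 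For the minimal case, take a minimal surface $S_n$ homotopic to $\hat\Sigma_n$ (it exists since $M$ is closed hyperbolic and $\hat\Sigma_n$ is incompressible, by Sacks--Uhlenbeck and Schoen--Yau). Then $\pi_1(S_n)=\pi_1(\hat\Sigma_n)$, so $K(S_n)\ge K_0(\theta_0)>1$, and by Epstein and Uhlenbeck--Seppi the principal curvatures of $S_n$ do not tend uniformly to zero; moreover $S_n$ and $\hat\Sigma_n$ lift to subsets of $\bH^3$ within bounded Hausdorff distance, and $S_n$ is nearly totally geodesic wherever $\hat\Sigma_n$ is, so its tangent planes over the good part are still $(\delta(R_n)+o(1))$-dense in the $\delta(R_n)$-thick part, whence $(S_n)$ is asymptotically dense as in Corollary~\ref{epsilondensecorollary}.

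The main obstacle is the quasifuchsianness of $\hat\Sigma_n$ in the second paragraph: the Kahn--Markovi\'{c} $\pi_1$-injectivity argument is built for surfaces all of whose seams are nearly flat, and one has to genuinely redo a (mild) generalization of the quasigeodesic estimate to accommodate one orbit of seams carrying a fixed bend, exploiting the coherence of bending along a simple closed curve to keep the induced turns from accumulating, all with constants uniform in $n$. A secondary technical point is the comparison in the minimal case between $S_n$ and $\hat\Sigma_n$ away from the bent seam.
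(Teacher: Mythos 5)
Your construction is in essence the paper's: a closed connected good-pants surface at scale $R_n$ in which every seam is $(\epsilon_n,R_n)$-good except one, which is glued with a fixed definite bend $\theta$; quasifuchsianness with constant $K(\theta)$ bounded away from $1$ gives the failure of asymptotic Fuchsianness, the part of the surface far from the bent seam gives asymptotic density, and the minimal case is handled by comparison with the pleated surface. The problem is the step you yourself flag as the main obstacle: uniform quasifuchsianness ($\pi_1$-injectivity with constants independent of $n$) in the presence of one bent seam. You only outline a plan to "redo" the Kahn--Markovi\'{c} local-to-global quasigeodesic estimate with coherent turns along a simple closed curve, and since this is the heart of the whole construction, as written the proposal has a genuine gap there. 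The paper does not redo any such estimate: it takes two closed connected good-pants surfaces $S^1,S^2$, cuts both along a common good curve $\gamma$, and cross-reglues $\pi_1^{\mp}$ to $\pi_2^{\pm}$ with the feet offset by $i\theta$, and the fact that such a regluing at a definite small angle is $K(\theta)$-quasifuchsian is quoted from Kahn--Markovi\'{c} \cite{KMC}; the statement you propose to establish is already available in the literature, so no new local-to-global argument is needed.

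Two secondary issues. First, your "not asymptotically Fuchsian" argument is not quite right as stated: the pleated surface you build is not the boundary of the convex core of $\bH^3/\pi_1(\hat\Sigma_n)$, so its bend of size $\theta_0$ along $c_{R_n}$ does not directly yield an atom of the convex-core bending lamination, and "small total bending mass" is in any case the wrong smallness for nearly Fuchsian groups of unbounded genus (one must use a bending norm over bounded-length transversals). The paper argues more robustly: near the bent seam the lift of the surface contains two almost totally geodesic pieces meeting at a definite angle along a long geodesic, so the convex hull of the limit set has width bounded below in terms of $\theta$, which bounds $K$ away from $1$ (via the estimates in \cite{Se}) and, by \cite{epstein1986hyperbolic}, controls the principal curvatures of the minimal representatives away from zero. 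Second, in the minimal case, bounded Hausdorff distance between $\tilde S_n$ and $\tilde{\hat\Sigma}_n$ does not give closeness of tangent planes; the paper instead chooses basepoints whose distance to the bent geodesic tends to infinity and whose tangent planes approximate a plane tangent to a dense totally geodesic plane, notes that there the lifts of the pleated surfaces converge to that geodesic plane, and traps the minimal surface inside the collapsing convex hull of the limit set to conclude density of the minimal surfaces as well. These points are fixable, but the quasifuchsianness step must be either proved or cited for the argument to be complete.
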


\subsection{Related work and context}

The question of whether what is true for totally geodesic surfaces in hyperbolic 3-manifolds remains true for sequences of asymptotically geodesic minimal surfaces has been a focus of recent activity \cite{a}, \cite{calegari2022counting}, \cite{kahn2023geometrically}, \cite{labourie2022asymptotic}.

More generally, Thurston noticed that closed surfaces with principal curvatures smaller than one are incompressible (see \cite{leininger2006small} for a proof), and since then surfaces with small principal curvatures have been important in hyperbolic 3-manifold theory. Recent papers in which they play an important role include \cite{brody2024approximating},\cite{bronstein2023almost}, \cite{huang2021beyond},  \cite{marques2024conformal}, \cite{nguyen2025weakly}.

Work by Solan studied subgroups of hyperbolic 3-manifold groups that are asymptotic to Fuchsian subgroups in a different sense-- that their critical exponents tended to one from below-- and proved rigidity and gap theorems in that context \cite{solan2024critical}.  

The second author proved density theorems for minimal surfaces in certain negatively curved 3-manifolds \cite{lowe2021deformations}. The techniques of that paper could be combined with Corollary \ref{dense} of this paper to prove asymptotic density statements for sequences of minimal surfaces in certain negatively curved 3-manifolds, provided that the minimal surfaces in the sequence were homotopic to asymptotically Fuchsian surfaces in the hyperbolic metric.

Arguments similar to Section \ref{section:umbilic} have prior to this been used to prove various asymptotic rigidity statements for k-surfaces \cite{alvarez2022foliated}, \cite{alvarez2024rigidity} (see \cite{alvarez2025foliated} for a survey.)

Finally we mention again the work by X.H. Han that obtains results similar to Theorem \ref{contra} and its corollaries, and gives applications inspired by analogies to filling curves on surfaces \cite{hans}.

\subsection{Outline}
In Section \ref{section:contra} we give the proof of Theorem \ref{contra}. In Section \ref{section:gap} we prove Theorem \ref{intro:acylindricalgap}. In Section \ref{section:possacc} we establish some structural facts regarding accumulation sets of asymptotically geodesic surfaces in hyperbolic manifolds.  In Section \ref{section:higherintro} we prove Theorem \ref{higherintro}.  In Section \ref{section:totallygeodesicvariablecurvature} we prove Theorems \ref{asymprigidity} and \ref{bigsurface}.  In Section \ref{section:umbilic} we prove Theorem \ref{intro:totallyumbilic}.  In Section \ref{section:notgeodesic} we prove Theorem \ref{lastthmintro}.

\subsection{Acknowledgements}

The second author was supported by NSF grant DMS-2202830.  The second author thanks Graham Smith for answering some questions about k-surfaces.  

\section{Proof of Theorem \ref{contra}} \label{section:contra}

Let $M$ be a finite volume hyperbolic 3-manifold. Denote by $\Gr M$ the Grassmann bundle of tangent 2-planes to $M$.  We start by recording a useful lemma.

\begin{lem}\label{isolate}
Let $E\se \Gr M$ be a finite collection of finite area totally geodesic surfaces. Then there is $\delta = \delta(E)>0$ so that if $S\se M$ is an essential closed immersed surface that is not homotopic to any totally geodesic surface and no essential loop of which is homotopic into a cusp, then $d(\Pi,E)>\delta$ for some tangent plane $\Pi$ to $S$ in the $\delta$-thick part of $M$.  
\end{lem}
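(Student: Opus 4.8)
The plan is to prove Lemma~\ref{isolate} by contradiction, choosing $\delta=\delta(E)>0$ small in a way dictated by $E$ and $M$ and showing that a surface $S$ \emph{all} of whose tangent planes over the $\delta$-thick part lie within $\delta$ of $E$ must be homotopic to a totally geodesic surface. The conclusion will come from identifying a single $\Sigma\in E$ for which $\pi_1 S$ is conjugate into $\pi_1\Sigma$ in $\pi_1 M$: if $\Sigma$ is noncompact then $\pi_1\Sigma$ is free and cannot contain the closed surface group $\pi_1 S$, while if $\Sigma$ is compact then $\pi_1 S$ has finite index in $\pi_1\Sigma$, so $S$ is homotopic to a finite cover of $\Sigma$ and hence to a totally geodesic surface---contradicting the hypothesis either way.

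For the setup I would first record that the Gauss lift $\widehat\Sigma\se\Gr M$ of a totally geodesic surface $\Sigma$ is an \emph{embedded} submanifold, and that $\widehat\Sigma\cap\widehat\Sigma'=\varnothing$ for distinct $\Sigma,\Sigma'\in E$: distinct local sheets of a totally geodesic surface through a point carry distinct tangent planes, and two totally geodesic surfaces agreeing to first order at a point coincide. Fix $\epsilon_0>0$ so that the $\epsilon_0$-thin part of $M$ is a disjoint union of cusp neighbourhoods $C$, and let $M^{\mathrm{thick}}$ be its complement. Then each $\widehat\Sigma$ restricted to $M^{\mathrm{thick}}$ is a compact submanifold of $\Gr M$ meeting the boundary transversally, so it has an embedded tubular neighbourhood $\widehat N_\Sigma$ with a retraction $\widehat N_\Sigma\to\widehat\Sigma$; let $\rho_\Sigma>0$ be its radius, and let $d_0>0$ be the minimal pairwise distance among the compact sets $\widehat\Sigma\cap\Gr M|_{M^{\mathrm{thick}}}$, $\Sigma\in E$. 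I then take $\delta<\min\bigl(\min_\Sigma\rho_\Sigma,\ d_0/3,\ \epsilon_0\bigr)$, which depends only on $E$ and $M$.

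The argument then runs in three steps. \emph{(i) Cusp pieces are inessential.} For a cusp neighbourhood $C$, any component $D$ of $S\cap C$ has every loop lying simultaneously in $\pi_1 S$ and in a conjugate of $\pi_1 C$ inside $\pi_1 M$, hence (by the no-peripheral-loop hypothesis, and since $\pi_1 S\hookrightarrow\pi_1 M$) the map $\pi_1 D\to\pi_1 S$ is trivial; a standard surface-topology argument then shows $D$ lies inside a disk in $S$. Consequently the topologically essential part of $S$ is confined to $M^{\mathrm{thick}}$: there is a single connected component $A$ of $S^\circ:=S\cap M^{\mathrm{thick}}$ such that $\pi_1 A\to\pi_1 M$ has image all of $\pi_1 S$. \emph{(ii) $A$ tracks one $\Sigma$.} Along $A$---in particular along $\partial A\se\partial M^{\mathrm{thick}}$, which lies in the $\delta$-thick part---the tangent planes of $S$ lie within $\delta<d_0/3$ of $E$, hence within $\delta$ of a \emph{unique} $\widehat\Sigma$; since $A$ is connected, a single $\Sigma\in E$ works for all of $A$, so the Gauss lift of $A$ lies in $\widehat N_\Sigma$. \emph{(iii) Conclusion.} For a loop $\gamma\se A$, its Gauss lift lies in $\widehat N_\Sigma$; applying the retraction to $\widehat\Sigma$ and then the projection $\Gr M\to M$ exhibits $\gamma$ as freely homotopic in $M$ into $\Sigma$. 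Fixing a basepoint and connecting path and using step~(i), this gives $\pi_1 S\le g\,\pi_1\Sigma\,g^{-1}$, and the dichotomy of the first paragraph produces the desired contradiction.

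The step I expect to be the main obstacle is (ii): deep in the cusps the Gauss lifts of distinct members of $E$ come arbitrarily close, and $S$ is uncontrolled there, so one has to rule out that $S$ ``switches'' from tracking one member of $E$ to another. This is precisely where the hypothesis that no essential loop of $S$ is homotopic into a cusp is used---through step~(i) it forces all such switching to take place inside inessential disks, which are invisible to $\pi_1 S$. A secondary, purely routine point is the surface-topology input of step~(i) (a subsurface $D\se S$ with $\pi_1 D\to\pi_1 S$ trivial is contained in a disk of $S$), which is proved by an innermost-curve argument after passing to the universal cover of $S$.
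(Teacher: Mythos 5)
Your proposal is correct and follows essentially the same route as the paper's proof: argue by contradiction, use the thick--thin decomposition together with the no-peripheral-loop hypothesis to confine the topologically essential part of $S$ to the thick part where it must track a single member of $E$, retract onto that surface, and conclude via the impossibility of $\pi_1 S$ sitting in the free fundamental group of a cusped surface (noncompact case) or via $S$ being homotopic to a cover of a compact totally geodesic surface (the paper's Claim~\ref{homotope}). The only differences are organizational --- you run a uniform group-theoretic endgame instead of the paper's split into Claim~\ref{homotope} plus the disk/annulus analysis of $S\cap M^{\leq\delta}$ --- plus a minor bookkeeping point (define $d_0$ and the tubular radii over a slightly deeper thick part, e.g.\ the $\epsilon_0/2$-thick part, so that nearest points on the $\widehat\Sigma$ cannot slip into the cusps), neither of which affects correctness.
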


We first prove the following.  

\begin{claim}\label{homotope}
Let $G\se \Gr M$ be a finite collection of closed (compact) totally geodesic surfaces. There is $\delta = \delta(G)>0$ so that if $S\se M$ is a closed essential immersed surface whose tangent planes are contained in the $\delta$-neighborhood of $G$, then $S$ is homotopic to a cover of a surface in $G$.
\end{claim}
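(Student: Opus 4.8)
The plan is to argue by contradiction via a compactness argument in the space of immersed surfaces, exploiting the rigidity of totally geodesic surfaces in hyperbolic space. Suppose no such $\delta$ exists: then there is a sequence $\delta_k \to 0$ and closed essential immersed surfaces $S_k \se M$ whose tangent planes all lie in the $\delta_k$-neighborhood of $G$, yet no $S_k$ is homotopic to a cover of a surface in $G$. Lift everything to $\bH^3$: pick a basepoint $\til p_k$ on a lift $\til S_k$ of $S_k$, and look at the plane field of $\til S_k$. Since the tangent planes of $\til S_k$ are all within $\delta_k$ of the lifted family $\til G$ of totally geodesic planes (a discrete, $\Gamma$-invariant family of planes in $\bH^3$), and $\delta_k \to 0$, the surface $\til S_k$ must stay uniformly close to a single totally geodesic plane $P_k \in \til G$ containing (or near) $\til p_k$: the point is that the distinct planes in the $\Gamma$-orbit of any plane in $G$ are \emph{uniformly separated} in the Grassmann bundle over any compact region (since $G$ is a finite union of \emph{closed} totally geodesic surfaces, each corresponding to a discrete orbit of planes), so a connected surface whose tangent planes are within $\delta_k \ll$ (this separation) of $\til G$ cannot jump from being near one plane to being near another. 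Thus $\til S_k$ lies in a $C\delta_k$-neighborhood of a fixed totally geodesic plane $P_k$, with $C$ depending only on the geometry near the thick part — here I would use that $G$ is compact, so the whole picture takes place in a fixed compact part of $M$ and the relevant separation constant is uniform.

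Next I would convert "$\til S_k$ lies in a $C\delta_k$-neighborhood of $P_k$ with tangent planes $C\delta_k$-close to $T P_k$" into a homotopy statement. Let $H = \Stab_\Gamma(P_k) \cong \pi_1$ of the corresponding surface $T_k \in G$; the quotient $H \backslash P_k = T_k$. The nearest-point projection $\proj_{P_k} : \bH^3 \to P_k$ is $1$-Lipschitz and, restricted to the $C\delta_k$-tube around $P_k$, is $\Gamma$-equivariant on the relevant subgroups. I claim that for $k$ large, $\proj_{P_k}|_{\til S_k} : \til S_k \to P_k$ is a covering map onto its image, or at least a $\pi_1$-injective map whose image has finite index: because the tangent planes of $\til S_k$ are nearly parallel to $T P_k$, the projection is a local diffeomorphism (its differential is close to an isometry on the tangent space), hence a local homeomorphism from the closed surface $S_k$ (after descending) to $P_k / (\text{some subgroup})$, hence a covering map of closed surfaces. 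This forces $\pi_1(S_k)$ to be commensurable with $H$ inside $\Gamma$, and tracing through the identification shows $S_k$ is homotopic to a finite cover of $T_k \in G$ — contradicting the choice of the $S_k$.

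The main obstacle I anticipate is the step asserting that the nearest-point projection $\til S_k \to P_k$ is genuinely a covering map of \emph{closed} surfaces rather than merely an immersion or an infinite-sheeted cover — one must rule out, for instance, that $\til S_k$ spirals within the tube so that its quotient is noncompact or wraps infinitely. This is where compactness of $G$ and $\pi_1$-injectivity of $S_k$ (essentialness) do the work: since $S_k$ is closed and essential, $\proj_{P_k}$ descends to a proper map between closed surfaces once one checks the relevant stabilizer subgroups match up, and a proper local homeomorphism between closed surfaces is a finite cover. I would also need a short lemma that a $\pi_1$-injective immersed surface in $M$ staying in a small tube around a totally geodesic $T_k$ and everywhere nearly tangent to it must actually have $\pi_1$ landing in (a conjugate of) the stabilizer of the corresponding plane — this follows from the Margulis lemma / discreteness of $\Gamma$ applied to the lift, since group elements moving $\til p_k$ a bounded amount while keeping $\til S_k$ in the tube must nearly preserve $P_k$, hence (by discreteness of $\til G$ and for $\delta_k$ small) exactly preserve it.
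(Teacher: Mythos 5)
Your proposal is correct and takes essentially the same approach as the paper's proof: connectedness of $S$ together with the uniform separation of the compact totally geodesic surfaces (their tangent lifts are embedded and disjoint in $\Gr M$) pins $S$ near a single component, and the nearest-point projection --- a local diffeomorphism for $\delta$ small, hence a covering map by compactness/properness --- combined with the geodesic homotopy from $p$ to its projection gives the conclusion. The contradiction framing and the detour through $\bH^3$ with the stabilizer argument are extra packaging that the paper avoids by working directly in $\Gr M$, but they do not change the substance of the argument.
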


\begin{proof}
Note that as subsets of $\Gr M$, closed totally geodesic surfaces of $M$ are always embedded in $\Gr M$. In particular, distinct surfaces in $E$ are disjoint as subsets of $\Gr M$.

We may first take $\delta>0$ small enough so the $\delta$-neighborhoods of the components of $G$ are all disjoint from each other in $\Gr M$. Since $S$ is connected, we may assume it lies in the $\delta$-neighborhood of a component $T$ of $G$. 

Thus, for each point $p\in S$, there is a unique nearest point $\pi(p)\in T$. We can construct a homotopy having each point $p\in S$ traverse the geodesic segment in $M$ from $p$ to $\pi(p)$. Provided $\delta$ was chosen small enough, the map $\pi$ is a local diffeomorphism, and so a covering map. We conclude $S$ is homotopic to a cover of $T$.
\end{proof}

We now give the proof of Lemma \ref{isolate}. 
\begin{proof}
We choose $\delta$ smaller than the Margulis constant and the $\delta$ given by previous claim applied to the closed surfaces in $E$. We also require that the closed surfaces in $E$ are contained in the $\delta$-thick part of $M$.  By the previous claim and our choice of $\delta$, $S$ must have a tangent plane at a distance of at least $\delta$ from every tangent plane to every closed totally geodesic surface in $E$. It is therefore enough to show that $S$ has some tangent plane at a distance of at least $\delta$ from every tangent plane to a noncompact surface in $E$.  Assume for contradiction that this is not the case.

After perturbing $S$ slightly we can ensure that it intersects the boundary of the $\delta$-thin part $M^{\leq \delta}$ transversely.  Then every connected component of the intersection of $S$ with $M^{\leq \delta}$ must be homeomorphic to a disk or an annulus, since $S$ is essential.  Choosing $\delta$ smaller if necessary, reasoning as in Claim \ref{homotope} we can show that every connected component of the intersection $S \cap M^{\geq \delta}$ must be contained in a small neighborhood of at most one finite area totally geodesic surface in $E$.  The only way that $S \cap M^{\geq \delta}$ could be contained in a union of at least two distinct finite area totally geodesic surfaces (while not being contained in just one) would be if one of the annuli components of the intersection of $S$ with the thin part included to a parabolic element of $\pi_1(M)$-- such an annulus would have one boundary circle on a connected component of a component of $S \cap M^{\geq \delta}$ near one finite area totally geodesic surface and one boundary circle on a component of $S \cap M^{\geq \delta}$ near another finite area totally geodesic surface--  which is contrary to our assumption that $S$ contained no essential loops that could be homotoped into a cusp.   

Therefore every connected component of the intersection of $S$ with the thin part is contained in a small neighborhood of the same finite area surface $\Sigma_0$, and the core curves of all annuli in the intersection of $S$ with the thin part are contractible in $S$. In the same way as in the proof of Claim \ref{homotope} we can then homotope $S$ to a new surface $S'$ so that $S' \cap M^{\geq \delta}$ is contained in $\Sigma_0$.  Therefore for some choice of basepoint on $\Sigma_0$ the fundamental group of $S'$ injectively includes to a subgroup of the fundamental group of $\Sigma_0$, which is impossible because $\Sigma_0$ is cusped.

Therefore $S$ has tangent planes at a distance of at least $\delta$ to all tangent planes to non-compact surfaces in $E$, which completes the proof.   

\end{proof}

Below, for $p\in \Gr M$, denote by $\Delta_R(p)$ denote the geodesic disc of radius $R$ tangent to $p$. For a metric space $X$, we let $N_{\delta} A$ denote the $\delta$-neighborhood of subset $A\se X$. We say $A\se X$ is $\eps$-\emph{dense} if $N_{\eps} A = X$ -- in other words, $A$ intersects every ball of radius $\eps$ in $X$.
\iffalse
We say that a subset of $\Gr M$ is $\eps$\textit{-dense} if it intersects every ball of radius $\eps$ in $\Gr M$.
\fi

\subsection{When $M$ has finitely many finite area totally geodesic surfaces}

We first handle the case that $M$ has only finitely many finite-area totally geodesic surfaces. Throughout this bit, let $E$ denote the union of all the finite-area totally geodesic surfaces of $M$.

\begin{lem}\label{epsdense}
Assume that $M$ is compact. Then for every $\eps>0$ and every $\delta>0$, there exists $R_0(\eps,\delta)$ so if $R>R_0$ and $d(p,E)\geq \delta$, then $\Delta_R(p)$ is $\eps$-dense in $\Gr M$.
\end{lem}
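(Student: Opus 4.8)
Here is the plan. The proof is by contradiction, using compactness of $\Gr M$ together with the Ratner--Shah theorem as the essential input. Throughout, I regard $\Delta_R(p)\se\Gr M$ as the set of tangent $2$-planes to the totally geodesic disc of radius $R$ based at the footpoint of $p$ and tangent to $p$. The structural facts I would record first are: (i) $R\mapsto\Delta_R(p)$ is nondecreasing, and the union $P(p):=\bigcup_{R>0}\Delta_R(p)$ is exactly the complete (possibly non-embedded) totally geodesic plane through $p$, viewed as a subset of $\Gr M$; and (ii) by the theorem of Ratner and Shah \cite{R},\cite{S}, the closure $\overline{P(p)}$ in $\Gr M$ is either a closed totally geodesic surface of $M$ (hence, since $M$ has only finitely many finite-area totally geodesic surfaces, one of the ones whose union is $E$, so in particular $p\in E$), or all of $\Gr M$.

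Next I would set up the contradiction. If the lemma fails for some $\eps>0$ and $\delta>0$, then for each $n$ there are $R_n>n$, a point $p_n$ with $d(p_n,E)\geq\delta$, and a point $q_n\in\Gr M$ with $B(q_n,\eps)\cap\Delta_{R_n}(p_n)=\emptyset$ (this is just the failure of $\eps$-density of $\Delta_{R_n}(p_n)$). Since $M$ is closed, $\Gr M$ is compact, so after passing to a subsequence I may assume $p_n\to p$ and $q_n\to q$. Because $M$ has only finitely many finite-area totally geodesic surfaces --- and each is compact, as $M$ is compact --- the set $E$ is closed in $\Gr M$; hence $d(\cdot,E)$ is continuous and $d(p,E)\geq\delta$, so $p\notin E$.

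The heart of the argument is to show that $\overline{P(p)}$ misses the open ball $B(q,\eps)$. Fix $R>0$. For all large $n$ we have $R_n>R$, so $\Delta_R(p_n)\se\Delta_{R_n}(p_n)$ and therefore $B(q_n,\eps)\cap\Delta_R(p_n)=\emptyset$. Since the radius-$R$ totally geodesic disc varies continuously with $p$ (using completeness of $M$), every point of $\Delta_R(p)$ is a limit of points of $\Delta_R(p_n)$, and each such point stays at distance $\geq\eps$ from $q_n$, hence at distance $\geq\eps$ from $q$ in the limit. Thus $\Delta_R(p)\cap B(q,\eps)=\emptyset$ for every $R>0$, so $P(p)$, and hence its closure, is disjoint from $B(q,\eps)$. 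Therefore $\overline{P(p)}\neq\Gr M$, and the dichotomy from (ii) forces $p\in E$, contradicting $d(p,E)\geq\delta$. This produces the required $R_0(\eps,\delta)$.

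I expect the main obstacle to be a matter of care rather than depth: making the continuous-dependence step genuinely rigorous --- e.g.\ by parametrizing each $\Delta_R(p')$ by the exponential map on a fixed Euclidean disc $\bar D_R\se\bbR^2$ and noting that the resulting map $\bar D_R\times\Gr M\to\Gr M$ is jointly continuous, which is what licenses "every point of $\Delta_R(p)$ is a limit of points of $\Delta_R(p_n)$" --- and being explicit about the two places the finiteness hypothesis is used, namely that $E$ is closed and that a closed totally geodesic surface of compact $M$ is among the finitely many comprising $E$. Everything else (compactness, monotonicity of $\Delta_R$, invoking Ratner--Shah) is soft.
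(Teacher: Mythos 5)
Your proposal is correct and follows essentially the same route as the paper: argue by contradiction, extract convergent subsequences $p_n\to p\notin N_\delta(E)$ and $q_n\to q$ by compactness, use continuity of the radius-$R$ geodesic disc in its basepoint (the paper phrases this as Hausdorff convergence of $\Delta_{R_k}(p_j)$ to $\Delta_{R_k}(p)$, at the cost of shrinking $\eps$ by constant factors) to conclude the full geodesic plane through $p$ misses a ball, and then contradict the Ratner--Shah theorem. The only cosmetic difference is that you invoke Ratner--Shah as a dichotomy (closure is a closed totally geodesic surface or everything) while the paper cites directly the density of the plane through any $p\in\Gr M - E$; these are the same input.
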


\begin{proof}
If the statement is false, then there is $\eps,\delta>0$ and a sequence of $R_j$ tending to infinity so that for every positive integer $j$, there is $p_j\notin N_{\delta}(E)$ so that $\Delta_{R_j} (p_j)$ is not $\eps$-dense in $\Gr M$. In particular, $\Delta_{R_j} (p_j)$ misses an $\eps$-ball $B_{\eps} (q_j)$ around some $q_j \in \Gr M$.

From the compactness of $\Gr M - N_{\delta}(E)$ and $\Gr M$, we can extract common convergent subsequences of $(p_j)$ and $(q_j)$ converging to some $p\notin N_{\delta} (E)$ and $q\in \Gr M$, respectively. For simplicity, we also denote these subsequences by $(p_j)$ and $(q_j)$.

By construction, $\Delta_{R_j}(p_j)$ does not intersect $B_{\eps}(q_j)$. For sufficiently large $j$, we can also ensure that $\Delta_{R_j}(p_j)$ does not intersect $B_{\eps/2}(q)$.

In particular, if we fix a (sufficiently large) $k$, then for $j\geq k$, $\Delta_{R_k}(p_j)$ does not intersect $B_{\eps/2}(q)$. But as $k$ is fixed, the Hausdorff distance between $\Delta_{R_k}(p_j)$ and $\Delta_{R_k}(p)$ goes to zero as $j\to \infty$. Thus for $j$ large enough, we have that $\Delta_{R_k}(p) \se N_{\eps/4} \Delta_{R_k} (p_j)$. We conclude $\Delta_{R_k}(p)$ does not intersect $B_{\eps/4}(q)$ and so the geodesic plane $\Delta(p) := \bigcup_{k\geq 1} \Delta_{R_k}(p)$ through $p$ is not dense in $\Gr M$.

This contradicts the theorem of Ratner and Shah (\cite{R},\cite{S}) which tells us that the geodesic plane through any point $p\in \Gr M - E$ is dense in $\Gr M$.
\end{proof}

If $M$ has cusps, essentially the same argument shows the following.  

\begin{lem}\label{epsdensecusps}
For every $\eps>0$, $\delta>0$ and $\eta>0$, there exists $R_0(\eps,\delta,\eta)$ so that if $R>R_0$, $p \in M^{\geq \eta}$ and $d(p,E)\geq \delta$, then $\Delta_R(p)$ is $\eps$-dense in $\Gr M^{\geq \eta}$, where $M^{\geq \eta}$ is the $\eta$-thick part of $M$.
\end{lem}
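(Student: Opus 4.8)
The plan is to follow the proof of Lemma~\ref{epsdense} almost verbatim, the only new point being to replace the compactness of $\Gr M$ --- which fails once $M$ has cusps --- with the compactness of the $\eta$-thick part. Suppose the conclusion fails for some $\eps,\delta,\eta>0$. Then there is a sequence $R_j\to\infty$ and, for each $j$, a plane $p_j\in\Gr M^{\geq\eta}$ with $d(p_j,E)\geq\delta$ such that $\Delta_{R_j}(p_j)$ misses some ball $B_\eps(q_j)$ with $q_j\in\Gr M^{\geq\eta}$, where the ball and the distance are taken in the ambient $\Gr M$. First I would record the two compactness inputs: $\Gr M^{\geq\eta}$ is compact, since $M$ has finite volume and its ends are cusps on which the injectivity radius tends to $0$, so $M^{\geq\eta}$ stays a definite distance from every cusp; and $E$, being a finite union of finite-area totally geodesic surfaces, is closed in $\Gr M$, so $K:=\{q\in\Gr M^{\geq\eta}:d(q,E)\geq\delta\}$ is compact. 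Passing to subsequences, $p_j\to p\in K$ (in particular $p\notin E$) and $q_j\to q\in\Gr M^{\geq\eta}$.

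Then I would run the Hausdorff-continuity argument exactly as in Lemma~\ref{epsdense}. For $j$ large we have $d(q_j,q)<\eps/2$, so $\Delta_{R_j}(p_j)$ misses $B_{\eps/2}(q)$; for $j\geq k$ we have $\Delta_{R_k}(p_j)\subseteq\Delta_{R_j}(p_j)$, so $\Delta_{R_k}(p_j)$ misses $B_{\eps/2}(q)$ as well. Now fix $k$ and let $j\to\infty$: since the totally geodesic disc of radius $R_k$ tangent to $p$, and hence its set of tangent planes $\Delta_{R_k}(p)\subseteq\Gr M$, depends continuously on $p$, we get $\Delta_{R_k}(p_j)\to\Delta_{R_k}(p)$ in the Hausdorff metric on compact subsets of $\Gr M$, so $\Delta_{R_k}(p)\subseteq N_{\eps/4}\Delta_{R_k}(p_j)$ for $j$ large, and therefore $\Delta_{R_k}(p)$ misses $B_{\eps/4}(q)$. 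As $k$ was arbitrary, the full geodesic plane $\Delta(p)=\bigcup_k\Delta_{R_k}(p)$ misses $B_{\eps/4}(q)$, contradicting the Ratner--Shah theorem, which gives that the geodesic plane through any $p\in\Gr M\setminus E$ is dense in all of $\Gr M$ --- in particular near the point $q$.

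The only step needing care, and it is not really an obstacle, is keeping straight the meaning of ``$\eps$-dense in $\Gr M^{\geq\eta}$'': the disc $\Delta_R(p)$ is allowed to wander into the cusps, and one only asks that it come within $\eps$ of every plane based in the thick part. So one should not intersect $\Delta_R(p)$ with $\Gr M^{\geq\eta}$; the only objects that must lie in a compact set are the basepoints $p_j$ and the centers $q_j$, and they do. With this understood the argument is formally identical to the compact case, and Ratner--Shah is applied to the limit plane $p$, for which the thick/thin decomposition is irrelevant.
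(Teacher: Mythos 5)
Your proposal is correct and is exactly what the paper intends: the paper proves this lemma by saying "essentially the same argument" as Lemma \ref{epsdense} works, and your write-up supplies precisely that, substituting compactness of $\Gr M^{\geq \eta}$ for compactness of $\Gr M$ and applying Ratner--Shah to the limiting plane. Your remark that only the basepoints $p_j$ and centers $q_j$ need to lie in the compact thick part, while the discs $\Delta_R(p_j)$ may enter the cusps, is the right way to read the statement and matches the paper's usage.
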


We can now prove Theorem 1.1 when $M$ has only finitely many finite area totally geodesic surfaces. 

\begin{proof}[Proof of Theorem 1.1, special case]
First assume that $M$ is compact. Let $\eps>0$ be the radius of $B$ and let $\delta>0$. By Lemma \ref{epsdense}, there is $R_0>0$ so whenever $d(p,E)>\delta$, then $\Delta_R(p)$ is $\eps$-dense in $\Gr M$.

Let $S$ be a $(1+\xi)$-quasifuchsian minimal surface distinct from the surfaces in $E$. A result of Seppi tells us the absolute values of the principal curvatures of $S$ are uniformly bounded by $C\log(1+\xi)$, where $C$ is a universal constant \cite{Se}. 
In particular, if we denote by $D^S_R(p)$ the intrinsic disc in $S$ of radius $R$ passing through $p$, for $\xi$ sufficiently small depending on $\eps$, we have that
\[
\Delta_R(p) \se N_{\eps} (D^S_R (p)).
\]

From Lemma \ref{isolate}, we know that every quasifuchsian surface $S\se \Gr M$ has at least one point $p(S) \in \Gr M$ outside of $N_{\delta} E$. In particular, $D^S_R (p(S))$ is $2\eps$-dense in $\Gr M$.

If $M$ has cusps then one can apply Lemma \ref{isolate} and Lemma \ref{epsdensecusps} to argue in the same way, replacing $M$ with the $\eta$-thick part of $M$ in the relevant places.

\end{proof}

\subsection{When $M$ has infinitely many finite area totally geodesic surfaces}

\begin{lem}\label{epsdensearith}
Assume that $M$ is compact. Then for every $\eps>0$, there exists a collection $E\se \Gr M$ of finitely many closed totally geodesic surfaces such that for every $\delta>0$, there is $R_0>0$ so if $R>R_0$ and $d(p,E)\geq \delta$ then $\Delta_R(p)$ is $\eps$-dense in $\Gr M$.
\end{lem}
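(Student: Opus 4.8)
The plan is to choose the finite collection $E$ using the Mozes--Shah equidistribution theorem, and then run essentially the compactness argument of Lemma~\ref{epsdense}, now feeding in the full Ratner--Shah dichotomy (the closure of a geodesic plane is all of $\Gr M$ or a closed totally geodesic surface) rather than just density off $E$.

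First I would fix $\eps>0$ and let $E$ be the collection of all closed totally geodesic surfaces $\Sigma$ of $M$ whose image in $\Gr M$ fails to be $(\eps/4)$-dense. \emph{Claim: $E$ is finite.} If not, pick distinct $\Sigma_1,\Sigma_2,\dots \in E$. By the Mozes--Shah theorem \cite{MS}, the probability measures $\mu_{\Sigma_n}$ induced on $\Gr M$ converge weak-$*$ to the Liouville probability measure $m$. Cover the compact space $\Gr M$ by finitely many balls $B_{\eps/8}(c_1),\dots,B_{\eps/8}(c_L)$; since $m\big(B_{\eps/8}(c_i)\big)>0$ for each $i$, we get $\mu_{\Sigma_n}\big(B_{\eps/8}(c_i)\big)>0$ for all $n$ large, hence $\Sigma_n$ meets each $B_{\eps/8}(c_i)$ for all large $n$, hence $\Sigma_n$ is $(\eps/4)$-dense in $\Gr M$ for all large $n$ --- contradicting $\Sigma_n\in E$. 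So $E$ is finite. This is the only place the hypothesis that $M$ carries infinitely many totally geodesic surfaces is used, and I expect it to be the main (if brief) point of the argument: one needs the quantitative consequence of Mozes--Shah that all but finitely many closed totally geodesic surfaces are $(\eps/4)$-dense in $\Gr M$.

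Next I would argue by contradiction exactly as in Lemma~\ref{epsdense}. If the conclusion failed for this $E$, there would be $\delta>0$, radii $R_j\to\infty$ (which we may take increasing), planes $p_j$ with $d(p_j,E)\geq\delta$, and points $q_j$ with $\Delta_{R_j}(p_j)\cap B_\eps(q_j)=\emptyset$. Passing to subsequences, $p_j\to p$ with $d(p,E)\geq\delta$ and $q_j\to q$. For a fixed $k$: once $j\geq k$ and $j$ is large we have $\Delta_{R_k}(p_j)\se\Delta_{R_j}(p_j)$ and $d(q_j,q)<\eps/2$, so $\Delta_{R_k}(p_j)$ misses $B_{\eps/2}(q)$; since $\Delta_{R_k}(\cdot)$ depends continuously on its center in the Hausdorff metric on the compact manifold $M$, for $j$ large $\Delta_{R_k}(p)\se N_{\eps/4}\big(\Delta_{R_k}(p_j)\big)$, and therefore $\Delta_{R_k}(p)$ misses $B_{\eps/4}(q)$. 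As this holds for every $k$, the geodesic plane $\Delta(p)=\bigcup_k\Delta_{R_k}(p)$, and hence its closure $\overline{\Delta(p)}$, misses the open ball $B_{\eps/4}(q)$; in particular $\overline{\Delta(p)}$ is not $(\eps/4)$-dense in $\Gr M$.

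Finally I would invoke the Ratner--Shah theorem \cite{R},\cite{S} in the form: $\overline{\Delta(p)}$ is either all of $\Gr M$ or a closed totally geodesic surface. The first alternative is impossible since $\overline{\Delta(p)}$ is not $(\eps/4)$-dense, so $\overline{\Delta(p)}=\Sigma$ for some closed totally geodesic surface $\Sigma$; as $\Sigma$ is not $(\eps/4)$-dense, $\Sigma\in E$ by construction. But $p\in\overline{\Delta(p)}=\Sigma\se E$, so $d(p,E)=0$, contradicting $d(p,E)\geq\delta>0$. This contradiction proves the lemma. Apart from the finiteness of $E$, every step is a repackaging of the compactness/constant-chasing in Lemma~\ref{epsdense} together with the standard refinement of the Ratner--Shah dichotomy (valid because there is no connected Lie subgroup strictly between $\PSL_2\bR$ and $\PSL_2\bC$).
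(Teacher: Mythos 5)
Your proposal is correct and follows essentially the same route as the paper: the same choice of $E$ (all closed totally geodesic surfaces failing to be $\eps/4$-dense, finite by Mozes--Shah), the same compactness/contradiction argument as in Lemma~\ref{epsdense}, and the same final appeal to Ratner--Shah. Your closing step phrases the dichotomy via the closure of $\Delta(p)$ rather than casing on whether $p$ is tangent to a closed totally geodesic surface, but this is only a repackaging of the paper's argument.
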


\begin{proof}
Let $\eps>0$. We choose $E$ to be the union of all the totally geodesic surfaces of $M$ which are not $\eps/4$-dense in $\Gr M$. From the equidistribution theorem of Mozes and Shah (\cite{MS}), we know $E$ is the union of finitely many surfaces. 

Now fix $\delta>0$. We now wish to show there is $R_0 = R_0(\eps,\delta,E)$ so whenever $p\notin N_{\delta}(E)$, then $\Delta_R (p)$ is $\eps$-dense in $\Gr M$. If we suppose that is not true, then we may find, as in the proof of Lemma \ref{epsdense}, a point $p\in \Gr M - N_{\delta}(E)$ so the geodesic plane $\Delta(p)$ through $p$ misses a ball of radius $\eps/4$ around some point $q$. If $p$ does not lie in a totally geodesic surface, then this contradicts the Ratner-Shah theorem on geodesic planes, as explained before. If $p$ does lie in a totally geodesic surface, then it has to be a surface not in $E$, which by construction is $\eps/4$-dense in $\Gr M$, again a contradiction.
\end{proof}

\begin{proof}[Proof of Theorem \ref{contra} in general]
Let $\eps>0$, and assume that $M$ is compact. By Lemma \ref{epsdensearith}, there is a finite collection $E\se \Gr M$ of geodesic surfaces, $\delta > 0$ small enough so $\Gr M - N_{\delta}(E)$ is nonempty, and $R_0>0$ so that for every $R>R_0$, if $p\in \Gr M - N_{\delta}(E)$ then $\Delta_R(p)$ is $\eps$-dense in $\Gr M$.

Now, by the same arguments (the last three paragraphs) from the proof of Theorem \ref{contra} in the previous case, we can show that for $\xi$ sufficiently small, $(1+\xi)$-quasifuchsian surfaces distinct from those contained in $E$ are $2\eps$-dense in $\Gr M$.

The arguments in the case that $M$ has cusps are again similar.  
\end{proof}

\section{Nonexistence of asymptotically geodesic surfaces in infinite volume} \label{section:gap}

In this section we show that, unlike in the finite-volume case, there is a gap between totally geodesic surfaces and non-totally geodesic essential surfaces for geometrically finite hyperbolic 3-manifolds of infinite volume.

\begin{thm}
Let $M=\Gamma\backslash \bH^3$ be an infinite volume  geometrically finite hyperbolic 3-manifold. 
Then there is $\eps = \eps(\Gamma)> 0$ so all $(1+\eps)$-quasifuchsian closed surface subgroups of $\Gamma$ are Fuchsian. 
\end{thm}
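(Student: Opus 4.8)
The plan is to argue by contradiction. If no such $\eps$ existed, there would be a sequence $\eps_n\to 0$ and surface subgroups $\Gamma_n\leq\Gamma$ which are $(1+\eps_n)$-quasifuchsian but not Fuchsian. These subgroups are eventually pairwise distinct: if one recurred infinitely often it would be $(1+\eps_n)$-quasifuchsian for a subsequence with $\eps_n\to 0$, hence Fuchsian. By the theorems of Uhlenbeck and Seppi recalled in the introduction, for large $n$ the quasifuchsian cover $\Gamma_n\backslash\bH^3$ of $M$ contains a unique closed minimal surface, with principal curvatures bounded by $C\log(1+\eps_n)\to 0$ by Seppi's estimate; pushing these surfaces down to $M$ yields a sequence $(S_n)$ of distinct closed minimal surfaces with $\pi_1 S_n=\Gamma_n$ and principal curvatures $\leq\delta_n\to 0$. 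So it suffices to show that a geometrically finite hyperbolic $3$-manifold of infinite volume contains no sequence of distinct closed asymptotically geodesic surfaces.

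The first step is to show that each $S_n$ lies in the convex core $\core(M)$. Lift $S_n$ to a properly immersed minimal disk $\wt S_n\subset\bH^3$ with ideal boundary $\Lambda(\Gamma_n)\subseteq\Lambda(\Gamma)$. I would apply the maximum principle: for a totally geodesic plane $P$ one of whose closed half-spaces $\overline H$ is disjoint from $\operatorname{Hull}(\Lambda(\Gamma_n))$, the signed distance $f=\dist(\cdot,P)$ with $f>0$ on $H$ satisfies $\operatorname{Hess}(\sinh f)=\sinh(f)\,g$, so $\sinh f$ is convex on $H$ and hence subharmonic on $\wt S_n\cap H$; since $f\to-\infty$ as one approaches $\Lambda(\Gamma_n)$ and $\wt S_n$ is proper, the maximum principle forces $\wt S_n\cap H=\emptyset$. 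Intersecting over all such $P$ gives $\wt S_n\subseteq\operatorname{Hull}(\Lambda(\Gamma_n))\subseteq\operatorname{Hull}(\Lambda(\Gamma))$, i.e.\ $S_n\subseteq\core(M)$. (A surface with small principal curvatures might a priori dive deep into a cusp of $\core(M)$; in the geometrically finite setting one checks this does not happen, so the relevant frames remain in a fixed compact set. I will suppress this point and assume for the exposition that $M$ is convex cocompact, so that $\core(M)$ is compact.)

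There are now two cases. If $\core(M)$ is compact with totally geodesic boundary, I would double it along $\partial\core(M)$ to get a closed, and hence finite-volume, hyperbolic $3$-manifold $DM$; then $S_n\subseteq\core(M)\subset DM$ is a sequence of distinct closed asymptotically Fuchsian minimal surfaces in $DM$, so by Corollary \ref{dense} the $S_n$ are asymptotically dense in $\Gr(DM)$ -- yet they all miss the nonempty open set $DM\setminus\core(M)$, a contradiction. In general one cannot double, so I would use homogeneous dynamics instead. Lift each $S_n$ to the $\SO(2)$-subbundle $\widehat{S_n}$ of the frame bundle $FM=\Gamma\backslash\PSL_2\bC$ consisting of the frames whose first two vectors are tangent to $S_n$, and let $\mathcal A\subseteq FM$ be the accumulation set of the sequence $(\widehat{S_n})$; by the previous paragraph $\mathcal A$ is a nonempty compact subset of the frames lying over $\core(M)$. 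Since $\widehat{S_n}$ is invariant under rotation of the frame within the tangent plane and, because $\delta_n\to 0$, asymptotically invariant under the geodesic flow in the first frame direction, the set $\mathcal A$ is invariant under the right action of $H:=\PSL_2\bR\leq\PSL_2\bC$. By the classification of closures of $H$-orbits in geometrically finite manifolds of infinite volume of McMullen--Mohammadi--Oh \cite{mcmullen2017geodesic} and Benoist--Oh \cite{BO}, a compact $H$-invariant subset of $FM$ is a finite union of closed $H$-orbits; equivalently, $\mathcal A$ is the lift $\widehat E$ of a finite collection $E$ of compact totally geodesic surfaces contained in $\core(M)$ (finite because, by those same works, $M$ contains only finitely many such surfaces).

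To conclude: $\mathcal A\subseteq\widehat E$ means that for every $\eta>0$ all but finitely many $\widehat{S_n}$ lie in the $\eta$-neighbourhood of $\widehat E$; choosing $\eta$ below the constant $\delta(E)$ of Claim \ref{homotope}, the covering-space argument of that claim shows that for large $n$ the surface $S_n$ is homotopic to a cover of a component of $E$. Hence $\pi_1 S_n=\Gamma_n$ is conjugate into a Fuchsian group and is therefore Fuchsian, contradicting the choice of the $\Gamma_n$; this proves the theorem. The hardest part is the general case: isolating from the McMullen--Mohammadi--Oh and Benoist--Oh theory the precise statement that a compact $H$-invariant subset of $FM$ must come from finitely many closed totally geodesic surfaces, and -- when $M$ has cusps -- verifying that the asymptotically geodesic surfaces $S_n$ do not escape into the cusps of $\core(M)$, so that $\mathcal A$ is nonempty and compact.
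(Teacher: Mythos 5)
Your overall strategy is the paper's: argue by contradiction, pass to the Uhlenbeck--Seppi minimal representatives with principal curvatures tending to zero (\cite{U}, \cite{Se}), confine them to $\core M$ by the maximum principle, and, when $\core M$ is compact with totally geodesic boundary, double and invoke Corollary \ref{dense} --- that special case is essentially verbatim the paper's argument. In the general case you repackage the endgame: you form the accumulation set $\mathcal{A}$ of the frame lifts, argue it is $H$-invariant, and use Benoist--Oh to force $\mathcal{A}$ into the finite union $E$ of geodesic surfaces, concluding via Claim \ref{homotope}; the paper instead produces points $p_i\in S_i$ in the $\delta$-thick part whose tangent planes are $\delta$-far from $E$ (via the Lemma \ref{isolate} arguments) and shows, by a compactness argument using the Benoist--Oh fact that any immersed geodesic plane not contained in $E$ leaves the convex core, that the geodesic disc $\Delta_R(p_i)$ --- hence the nearby intrinsic disc of $S_i$ --- exits $\core M$, a contradiction. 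Your route is workable in spirit, but two steps need care: the ``classification of compact $H$-invariant subsets as finite unions of closed $H$-orbits'' is not a statement you can quote off the shelf from \cite{mcmullen2017geodesic}, \cite{BO}; what is available (and what the paper uses) is that a geodesic plane contained in $\core M$ must belong to $E$, so you must check that every $H$-orbit in $\mathcal{A}$ projects into $\core M$ and then apply that statement, and you must actually prove the $H$-invariance of $\mathcal{A}$ (rotations together with the geodesic flow do generate $\PSL_2\bbR$, but the asymptotic flow-invariance requires the uniform comparison of intrinsic discs with geodesic discs). Also, the accumulation set of the full sequence could a priori be empty; pass to a subsequence at the outset.

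The genuine gap is the one you flag yourself: as written, the argument only covers convex cocompact $M$, while the theorem is for geometrically finite $M$ of infinite volume, where $\core M$ is noncompact and the surfaces in $E$ may be noncompact finite-area surfaces with cusps. You need (i) that the minimal surfaces $S_n$ do not escape into the cusps of $\core M$, so that $\mathcal{A}$ is nonempty and lies in a fixed thick part, and (ii) a version of your final homotopy step valid for noncompact targets: Claim \ref{homotope} is stated for compact totally geodesic surfaces, and pushing $S_n$ onto a neighborhood of a cusped finite-area surface requires the finer analysis of Lemma \ref{isolate} (controlling the disk and annulus components of $S_n$ in the thin part and ruling out essential loops homotopic into cusps). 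This is precisely the portion of the proof the paper carries out by invoking the Claim \ref{homotope}/Lemma \ref{isolate} arguments to locate a tangent plane of $S_i$ that is simultaneously $\delta$-far from $E$ and in the $\delta$-thick part of $\core M$; without it, your proof establishes only the convex cocompact case of the theorem.
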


The same proof also gives the following theorem.  
\begin{thm}
Let $M=\Gamma\backslash \bH^3$ be an infinite volume geometrically finite hyperbolic 3-manifold.  Then there is $\eps = \eps(M)>0$ so that if $S\se M$ is a closed  surface with principal curvatures uniformly bounded by $\eps$, then $S$ is totally geodesic.
\end{thm}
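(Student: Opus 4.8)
The plan is to argue by contradiction. If no such $\eps$ existed there would be closed surfaces $S_n\se M$ with principal curvatures bounded by $1/n$, none of them homotopic to a totally geodesic surface; as a surface with all principal curvatures $0$ is totally geodesic, the sequence cannot be eventually constant, so after passing to a subsequence the $S_n$ are distinct. By Thurston's bound each $S_n$ is incompressible, by Epstein \cite{epstein1986hyperbolic} it is $(1+\eps_n)$-quasifuchsian with $\eps_n\to 0$, and hence by Uhlenbeck and Seppi \cite{U},\cite{Se} it is homotopic to a \emph{unique} minimal surface $\Sigma_n$, whose principal curvatures are at most $C\log(1+\eps_n)\to 0$. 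It therefore suffices to contradict the existence of a sequence $(\Sigma_n)$ of distinct, non-totally-geodesic, asymptotically geodesic closed minimal surfaces. The first observation is that each $\Sigma_n$ lies in the convex core $C=\core(M)$: a lift to $\bH^3$ is a minimal disk whose ideal boundary is a quasicircle inside $\Lambda(\Gamma)$, and the standard barrier argument --- the equidistant surfaces of a supporting geodesic plane of the convex hull have principal curvatures $<1$ in absolute value and so obstruct the minimal disk by the maximum principle --- forces the disk into the convex hull of $\Lambda(\Gamma)$. When $M$ has no cusps $C$ is compact; cusps are dealt with as in Section \ref{section:contra}, working in a thick part and using that a closed surface of small principal curvatures cannot be homotoped into a cusp.

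Next I would analyze the Hausdorff limit of the Gauss lifts. Passing to a subsequence, the lifts $\widehat{\Sigma}_n\se\Gr C$ converge in Hausdorff distance to a nonempty compact set $Z\se\Gr C$. Because the second fundamental forms of the $\Sigma_n$ go uniformly to zero, any subsequential limit of (suitably based) universal covers $\widetilde{\Sigma}_n$ is, by the compactness theory for minimal surfaces of bounded curvature, a complete minimal surface with vanishing second fundamental form, i.e. a totally geodesic plane; translating this back to $\Gr M$ shows that for each $p\in Z$ and each $R$ the disk $\Delta_R(p)$ lies in $Z$, so $Z$ is \emph{geodesically saturated}: it contains the entire geodesic plane through, hence the closure of the geodesic plane through, each of its points. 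Thus $Z$ is a nonempty compact geodesically saturated subset of $\Gr M$. Now I invoke the structure theory of geodesic planes in geometrically finite infinite-volume hyperbolic $3$-manifolds (McMullen--Mohammadi--Oh \cite{mcmullen2017geodesic}, Benoist--Oh \cite{BO}): the closure of a geodesic plane meeting $C$ is either a closed totally geodesic surface or a noncompact properly immersed piece, and $M$ contains only finitely many closed totally geodesic surfaces. Since every such orbit-closure is a closed subset of the compact set $Z$, hence compact, the noncompact alternative is excluded, and $Z$ is contained in a finite union $\widehat{T}_1\cup\cdots\cup\widehat{T}_m$ of Gauss lifts of closed totally geodesic surfaces of $M$.

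Finally, Hausdorff convergence $\widehat{\Sigma}_n\to Z\se\widehat T_1\cup\cdots\cup\widehat T_m$ means that for every $\delta>0$ and all large $n$ the surface $\Sigma_n$ has all of its tangent planes within $\delta$ of $T_1\cup\cdots\cup T_m$ in $\Gr M$. Taking $\delta$ small and running the nearest-point-projection argument of Claim \ref{homotope}, $\Sigma_n$ is homotopic to a cover of a single $T_i$; being the unique minimal surface in its homotopy class it in fact equals that cover and is totally geodesic, contradicting the choice of $\Sigma_n$. (Running the same argument with quasifuchsian surface subgroups in place of immersed surfaces gives the group-theoretic statement.) In the special case when the convex core of $M$ is moreover compact with totally geodesic boundary there is a shorter route for this last part: doubling $M$ along $\partial C$ gives a closed hyperbolic $3$-manifold $DM$ in which the $\Sigma_n$ are distinct closed asymptotically Fuchsian minimal surfaces all contained in $C$, so they miss the nonempty open set $\Gr(DM\smallsetminus C)$, contradicting Corollary \ref{dense}.

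The step I expect to be most delicate is the interface between the curvature control on $(\Sigma_n)$ and the orbit-closure classification: one must upgrade the estimate ``$Z$ contains a small geodesic disk through each of its points'' to full geodesic saturation of $Z$ while the intrinsic injectivity radii of the $\Sigma_n$ may degenerate, and one must apply the McMullen--Mohammadi--Oh/Benoist--Oh classification in the generality of arbitrary (not necessarily acylindrical) geometrically finite $M$, so that compactness of an orbit-closure genuinely forces it to be a closed totally geodesic surface. The remaining ingredients --- the convex-hull confinement, uniqueness of minimal representatives, and the covering argument of Claim \ref{homotope} --- are routine.
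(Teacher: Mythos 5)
Your argument is correct in substance and rests on the same pillars as the paper's proof: the maximum principle confining the minimal representatives to $\core M$, the Benoist--Oh theorem \cite{BO} that $\core M$ contains only finitely many totally geodesic surfaces $E$ and that every immersed geodesic plane not contained in $E$ leaves the core, the Hausdorff comparison between intrinsic disks $D_R$ of small-curvature surfaces and geodesic disks $\Delta_R$, and the projection arguments of Claim \ref{homotope} and Lemma \ref{isolate}; your doubling argument in the special case of compact core with geodesic boundary is literally the paper's Part I, via Corollary \ref{dense}. (Your reading of the conclusion as ``homotopic to a totally geodesic surface,'' as in Theorem \ref{intro:acylindricalgap}, is the intended one.) The packaging differs: the paper argues surface by surface --- assuming $S_i$ is not totally geodesic, the Claim \ref{homotope}/Lemma \ref{isolate} arguments produce a tangent plane $p_i$ in the thick part at distance $\geq\delta$ from $E$, and a uniform-radius claim (proved by the compactness argument of Lemmas \ref{epsdense}--\ref{epsdensecusps} plus Benoist--Oh) forces $\Delta_R(p_i)$, hence $D_R(p_i)$ and the surface, to exit $\core M$, a contradiction --- whereas you extract a Hausdorff limit $Z$ of the Gauss lifts, prove $Z$ is geodesically saturated (this step is fine and needs no injectivity radius control: Hausdorff convergence of the ambient lifts plus the disk comparison at each fixed radius suffices), deduce $Z\se \hat E$ from Benoist--Oh, and land the contradiction on uniqueness of the minimal representative. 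These are contrapositive arrangements of the same compactness; the paper's version avoids extracting a limit object, while yours makes the accumulation set explicit, in the spirit of Section \ref{section:possacc}.

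The one place to tighten is the cusped case. When $M$ has cusps, $\Gr_2(\core M)$ is noncompact and you have not justified that the $\hat\Sigma_n$ stay in a fixed compact set, so ``Hausdorff convergence to a compact $Z$'' and the step ``every orbit closure is a closed subset of the compact $Z$, hence compact'' are not yet available; moreover the surfaces in $E$ may be finite-area and cusped, so Claim \ref{homotope} (stated for compact geodesic surfaces) does not by itself give ``homotopic to a cover of a single $T_i$'' --- that is precisely what the extra thin-part analysis in Lemma \ref{isolate} is for (disks and inessential annuli in the thin part, no essential loops into cusps since the groups are quasifuchsian and hence have no parabolics, and a closed surface group cannot inject into a cusped surface group). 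Both points are repaired without new ideas: take $Z$ to be a closed local Hausdorff limit in $\Gr_2(\core M)$; geodesic saturation still holds, and since every plane through a point of $Z$ is then contained in $\core M$, the formulation of \cite{BO} used in the paper (every plane not contained in $E$ leaves the core) gives $Z\se\hat E$ with no compactness of $Z$ needed; finally, since $\core M$ minus the cusp neighborhoods is compact, for large $n$ all tangent planes of $\Sigma_n$ over the thick part are $\delta$-close to $E$, and Lemma \ref{isolate} (rather than Claim \ref{homotope}) yields that $\Sigma_n$ is homotopic to a totally geodesic surface, the desired contradiction. Relatedly, the dichotomy you quote from \cite{mcmullen2017geodesic} should, in the generality of arbitrary geometrically finite $M$, be replaced by the statement actually cited in the paper, Theorem 1.5 of \cite{BO}.
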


\begin{rem}
We expect that similar statements could be obtained in higher dimensions for maximal surfaces (i.e., surfaces that cannot be homotoped into a proper totally geodesic submanifold) modulo a version of the Benoist-Oh theorem in higher dimensions (see \cite{lee2024orbit} for results in this direction in the higher dimensional case, assuming Fuchsian ends) but we focus here on the three-dimensional case. The question of whether there are convex cocompact hyperbolic $n$-manifolds without Fuchsian ends for $n>3$ is still unresolved, although Kerckhoff-Storm gave a negative answer locally \cite{kerckhoff2012local}. 
\end{rem}

\begin{proof}
{\bf I. A proof when $\core M$ is compact and has totally geodesic boundary.}

We first give a proof in the special case that $\core M$ is compact and has totally geodesic boundary. Let $\hat{M}$ be the closed hyperbolic 3-manifold obtained by doubling $\core M $

Suppose $(Q_i) \leq \Gamma$ is a sequence of asymptotically Fuchsian surface subgroups. They give rise to a sequence of immersed minimal surfaces $f_i: S_i\to M$ with $(f_i)_*(\pi_1 S_i) = Q_i$ \cite{U}, and by the maximum principle $f_i(S_i)$ always lies in the convex core of $M$. (The $S_i$ are obtained by minimizing area in the covering spaces corresponding to the injective inclusions of their fundamental groups.)  In addition, $f_i: S_i\to \hat{M}$ is a sequence of asymptotically geodesic minimal surfaces by \cite{Se}.  These surfaces are not dense in $\hat{M}$, as they never touch $\hat{M} - M$. This contradicts Theorem \ref{dense}. 

\medskip

{\bf II. The proof in general. }

The crucial ingredient in the proof is Theorem 1.5 of \cite{BO} that states that for a geometrically finite infinite volume hyperbolic 3-manifold $M$ there are only finitely many finite area totally geodesic surfaces contained in $\core M$, and these are the only closed immersed (not necessarily compact) totally geodesic planes in $\core M$.  We denote their union, as a subset of $\Gr_2 M$, by $E$.

Suppose $(Q_i)\leq \Gamma$ is a sequence of asymptotically Fuchsian surface subgroups. As before they give rise to a sequence of minimal surfaces $S_i \se \core M$ with $\pi_1 S_i = Q_i$ such that the $L^{\infty}$ norms of the principal curvatures of $S_i$ tend to zero as $i\to\infty$.    

If the $S_i$ were not  totally geodesic, then the same arguments as the proof of Claim \ref{homotope} and Lemma \ref{isolate} show that there is $\delta>0$ and $p_i \in S_i$ so that the tangent plane $\Delta(p_i)$ to $S_i$ at $p_i$ is at a distance of at least $\delta$ from $E$ in $\Gr_2 M$ and contained in the $\delta$-thick part of $\core M$.

The proof of the following statement can be proved by an argument by contradiction similar to the proof of Lemmas \ref{epsdense} and \ref{epsdensecusps}, using the fact from Benoist-Oh (\cite{BO}[Theorem 1.5]) that every immersed totally geodesic plane not contained in $E$ leaves the convex core of $M$.  

\begin{claim}
There is $R= R(\delta)>0$ so that for every $p$ in the intersection of the $\delta$-thick part of $M$ with $\Gr_2 (\core M) - N_{\delta} E$, the totally geodesic disc $\Delta_R(p)$ of radius $R$ based at $p$ leaves $\core M$.
\end{claim}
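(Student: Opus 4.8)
The plan is to argue by contradiction, following the same scheme as the proofs of Lemmas~\ref{epsdense} and~\ref{epsdensecusps}. First I would isolate the relevant compact set: let $K_\delta\se\Gr_2 M$ be the set of those $p$ lying over the $\delta$-thick part $M^{\geq\delta}$, lying over $\core M$, and with $d(p,E)\geq\delta$. Since $M$ is geometrically finite, $\core M\cap M^{\geq\delta}$ is compact (for $\delta$ below the Margulis constant it is $\core M$ with its cusp neighborhoods removed), so $\Gr_2(\core M\cap M^{\geq\delta})$ is compact, and imposing the closed condition $d(p,E)\geq\delta$ keeps $K_\delta$ compact. Now suppose the Claim is false. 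Then for every positive integer $j$ there is $p_j\in K_\delta$ such that $\Delta_j(p_j)$ does not leave $\core M$, i.e. $\Delta_j(p_j)\se\Gr_2(\core M)$. By compactness of $K_\delta$ we may pass to a subsequence with $p_j\to p\in K_\delta$; in particular $d(p,E)\geq\delta>0$.

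Next I would pass to the limit exactly as in Lemma~\ref{epsdense}. Fix $k$. For every $j\geq k$ we have $\Delta_k(p_j)\se\Delta_j(p_j)\se\Gr_2(\core M)$, and by continuous dependence of geodesics on their initial conditions the Hausdorff distance between $\Delta_k(p_j)$ and $\Delta_k(p)$ tends to $0$ as $j\to\infty$. Since $\core M$ is closed in $M$, it follows that $\Delta_k(p)\se\Gr_2(\core M)$, and as $k$ was arbitrary the entire geodesic plane $\Delta(p):=\bigcup_{k\geq1}\Delta_k(p)$ through $p$ is contained in $\Gr_2(\core M)$; that is, $\Delta(p)$ never leaves $\core M$. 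On the other hand $\Delta(p)$ is an immersed totally geodesic plane passing through $p$, and $p\notin E$ since $d(p,E)\geq\delta>0$, so $\Delta(p)$ is not contained in $E$. This contradicts the consequence of Benoist--Oh's Theorem~1.5 \cite{BO} that every immersed totally geodesic plane not contained in $E$ must leave $\core M$, and this contradiction proves the Claim.

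The one step that is not pure soft analysis is this last invocation of Benoist--Oh: the whole content is that, in infinite volume, a complete totally geodesic plane that stays inside the convex core is forced to be one of the finitely many finite-area totally geodesic surfaces comprising $E$ --- the analogue, in this setting, of the Ratner--Shah density theorem used in the proofs of Lemmas~\ref{epsdense} and~\ref{epsdensecusps}. Once that input is granted, the rest is the same compactness-and-limit extraction used there; in particular nothing new is needed to handle the cusps of $\core M$, since the base point $p$ is forced into the thick part by hypothesis while the plane $\Delta(p)$ is allowed to wander freely, which is precisely the regime in which Benoist--Oh's statement applies.
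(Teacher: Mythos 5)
Your proposal is correct and is exactly the argument the paper intends: the paper only sketches this claim, saying it follows by a contradiction argument in the style of Lemmas \ref{epsdense} and \ref{epsdensecusps} together with the Benoist--Oh theorem that every immersed totally geodesic plane not contained in $E$ leaves $\core M$, and your write-up fills in precisely those compactness and limit-extraction details (compactness of the thick part of the convex core, Hausdorff convergence of fixed-radius discs, closedness of $\core M$, and $p\notin E$ forcing the limiting plane not to lie in $E$).
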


The fact that the principal curvatures of $S_i$ are going to zero in $L^{\infty}$ norm tells us that the Hausdorff distance between the intrinsic discs $D_R(p_i)\se S_i$ of radius $R$ centered at $p_i$ and the geodesic discs $\Delta_R(p_i)$ goes to zero as $i\to\infty$. In particular, $\Delta_R(p_i)$ leaves $\core M$ for large enough $i$, contradicting that the surfaces $S_i$ all lie in $\core M$.

\end{proof}
\section{Possible accumulation sets in higher dimensions} \label{section:possacc}

Let $M$ denote a closed $d$-dimensional hyperbolic manifold.

Let $(X_n)$ be a sequence of sets in the Grassman bundle $\Gr M= \Gr_2 M$ of tangent 2-planes to $M$. The \emph{accumulation set} of $(X_n)$ consists of the points $x\in \Gr M$ arising as limits of sequences $x_n \in S_n$. Note that this is a strictly stronger condition than if we required only that the points in the set arose as subsequential limits of sequences $x_n$. Note also that $(X_n)$ is asymptotically dense exactly when its accumulation set is all of $\Gr M$ and $(X_n)$ is always asymptotically dense in its accumulation set.  Finally, we point out that the accumulation set of a sequence that converges in the Hausdorff topology is the Hausdorff limit of that sequence.

The goal of this section is to prove the following theorem.  

\iffalse
\begin{thm}
Let $(S_n)$ be a sequence of asymptotically geodesic closed connected surfaces in $M$. Then the Hausdorff limit of the $(S_n)$ in $M$ is a finite union of totally geodesic submanifolds of $M$ of dimension $3\leq k \leq d$.
\end{thm}
\fi

\begin{thm}
Let $S_n$ be a sequence of asymptotically Fuchsian minimal surfaces in $M$. Then, the accumulation set of $S_n$ is a finite union $E$ of totally geodesic submanifolds of $M$ of dimension $3\leq k \leq d$. 
\end{thm}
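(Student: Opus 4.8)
The plan is to show that the accumulation set $A\se\Gr M$ is closed and invariant under the geodesic-plane flow, and then to run it through the orbit-closure theorems of Ratner and Mozes--Shah. Set $FM=\Gamma\backslash\SO^\circ(d,1)$, let $\pi\colon FM\to\Gr M$ be the natural projection, let $H=\SO^\circ(2,1)\le\SO^\circ(d,1)$ be the subgroup whose right orbits in $FM$ project to the geodesic planes of $M$, and let $Q$ be the (compact) stabilizer of a $2$-plane, so $\Gr M=FM/Q$. As elsewhere in the paper I take the $S_n$ to be closed, connected, maximal and pairwise distinct, and I view each $S_n$ as its Gauss lift in $\Gr M$.

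\emph{Step 1.} I would first check that $A$ is closed --- this is a diagonal argument over the index $n$ --- and then that it is a union of geodesic-plane leaves. For the latter: if $p=\lim_n p_n\in A$ with $p_n\in S_n$, let $\hat P_n\se\Gr M$ be the Gauss lift of the totally geodesic $\bH^2$ tangent to $S_n$ at the footpoint of $p_n$. Since the $S_n$ are asymptotically Fuchsian minimal surfaces their principal curvatures go to zero uniformly \cite{Se}, so for each fixed $R$ the intrinsic disc $D^{S_n}_R(p_n)$ and the geodesic disc $\Delta_R(p_n)\se\hat P_n$ are within Hausdorff distance $\eps_n(R)\to0$ in $\Gr M$ --- the comparison estimate already used in Section \ref{section:contra}. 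Hence sliding $p_n$ a bounded distance along $\hat P_n$ keeps it within $\eps_n(R)$ of $S_n$, and letting $n\to\infty$ puts the whole geodesic-plane leaf through $p$ into $A$. Equivalently, $\wt A:=\pi^{-1}(A)$ is a closed, right-$H$-invariant (and right-$Q$-invariant) subset of $FM$.

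\emph{Step 2.} Because $H\cong\PSL_2\bR$ is generated by unipotents, Ratner's theorem \cite{R},\cite{S} applies to $\wt A$: for each $x\in\wt A$ one has $\overline{xH}=xS_x\se\wt A$ with $H\le S_x\le\SO^\circ(d,1)$ closed and $xS_x$ supporting an invariant probability measure. By the ensuing algebraicity, together with the classification of closed subgroups between $\SO^\circ(2,1)$ and $\SO^\circ(d,1)$, $S_x=\SO^\circ(k_x,1)\cdot C_x$ with $C_x$ compact in the centralizer of the $\SO^\circ(k_x,1)$-factor, for some $2\le k_x\le d$. Projecting to $M$ and to $\Gr M$ respectively, $xS_x$ maps onto a closed totally geodesic submanifold $N_x$ of dimension $k_x$ and onto $\Gr_2(N_x):=\{(y,V):y\in N_x,\ V\se T_yN_x\ \text{a }2\text{-plane}\}\se\Gr M$, using that $\SO^\circ(k_x,1)$ acts transitively on the $2$-plane Grassmann bundle of $\bH^{k_x}$ and that $C_x$ fixes $N_x$ pointwise. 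Since $\wt A$ is closed and $H$-invariant this gives $\Gr_2(N_x)\se A$, while $\pi(x)\in\Gr_2(N_x)$; hence $A=\bigcup_{x\in\wt A}\Gr_2(N_x)$.

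\emph{Step 3.} It remains to get finiteness and the bound $k\ge 3$. The totally geodesic $N$ with $\Gr_2(N)\se A$ are ordered by inclusion with strictly increasing dimension, so $A$ equals the union of $\Gr_2(N)$ over the maximal such $N$; there can be only finitely many maximal ones, since otherwise infinitely many would share a dimension $k<d$ (dimension $d$ forces $N=M$) and, by the Mozes--Shah equidistribution theorem \cite{MS}, these distinct closed totally geodesic $k$-submanifolds would eventually lie in every tubular neighborhood of a fixed totally geodesic $N'$ with $\dim N'>k$ and $\Gr_2(N')\se A$ --- hence inside $N'$ --- contradicting maximality. Finally, no maximal piece can have dimension $2$: such a piece would be a closed totally geodesic surface $\Sigma$ with $\hat\Sigma=\Gr_2(\Sigma)\se A$, so cofinally many of the (maximal, pairwise distinct) $S_n$ would accumulate on all of $\hat\Sigma$ in $\Gr M$; combined with their vanishing principal curvatures this traps a component of such an $S_n$ in an arbitrarily thin tubular neighborhood of $\Sigma$, where it must be a nontrivial cover of $\Sigma$, contradicting maximality. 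Granting this, $A=\bigsqcup_i\Gr_2(N_i)$ with $3\le\dim N_i\le d$. I expect this last point --- excluding a $2$-dimensional piece --- to be the main obstacle, since it is the one step that cannot be read off the homogeneous-dynamics dictionary: it must use the concrete geometry of the $S_n$ (small second fundamental form, $\pi_1$-injectivity, maximality) to show that accumulating onto a closed geodesic surface in $\Gr M$ forces the surface into a thin tube, and making that trapping argument precise is where I would concentrate the work.
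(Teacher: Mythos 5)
Your Steps 1 and 2, together with the finiteness part of Step 3, are correct and amount to a frame-bundle repackaging of what the paper actually does. The paper does not pass to $\Fr M$ and apply the orbit-closure theorem to the closed $H$-invariant set $\pi^{-1}(A)$; instead it first proves a density dichotomy (Lemma \ref{higherd}: a sequence not contained in a finite union of closed totally geodesic submanifolds is asymptotically dense, via Lemmas \ref{epsdensehigher} and \ref{higherisolate}), and then shows that the minimal finite union $F$ of totally geodesic submanifolds containing the accumulation set equals the accumulation set, using exactly the two mechanisms you isolate: the accumulation set is closed and saturated under geodesic planes because the principal curvatures vanish uniformly, and Ratner--Shah density of planes plus Mozes--Shah \cite{MS} bound the number of maximal totally geodesic pieces. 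Both routes deliver ``the accumulation set is a finite union of closed totally geodesic submanifolds,'' and your homogeneous-space formulation is a legitimate, slightly more systematic way of organizing that part; your Mozes--Shah finiteness argument is essentially the paper's.

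The genuine gap is the last point of Step 3, and it is not just a missing detail: the trapping step is false. Containment of $\hat\Sigma=\Gr_2(\Sigma)$ in the accumulation set only says that every $S_n$ eventually passes $\eps$-close, tangentially, to every point of $\hat\Sigma$; since the $S_n$ are connected, ``a component of $S_n$'' is all of $S_n$, and nothing confines it to a thin tube around $\Sigma$. Scarring sequences in dimension three \cite{a} and the glued surfaces of Theorem \ref{higherintro} near $N_1\cap N_2$ do exactly this: they accumulate onto the lift of a closed totally geodesic surface while ranging over a much larger set, so no cover of $\Sigma$ and no contradiction with maximality is produced. Worse, with the paper's liminf definition of accumulation set the exclusion cannot follow from your hypotheses alone: interleaving a sequence that is asymptotically dense in $\Gr_2(N_1)$ (Corollary \ref{dense} applied inside $N_1$) with one asymptotically dense in $\Gr_2(N_2)$, where $N_1,N_2$ are closed totally geodesic $3$-submanifolds meeting along a closed totally geodesic surface $T$ (such $M$ exist by Section \ref{section:higherintro}), produces asymptotically Fuchsian minimal surfaces whose accumulation set is exactly $\hat T$, which is $2$-dimensional. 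Note that the paper's own written argument stops at ``finite union of totally geodesic submanifolds'' and contains no separate proof of the bound $k\geq 3$; that bound is immediate only in a Hausdorff-convergence setting, where limsup control really does trap the whole (connected) surface near a $2$-dimensional component and the projection argument of Claim \ref{homotope} applies. So your instinct that this is the one step not read off the homogeneous-dynamics dictionary is right, but the fix you sketch does not work, and as stated the $2$-dimensional exclusion needs either Hausdorff convergence or some additional hypothesis on the sequence.
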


\begin{rem}
The same proof shows that the statement of the theorem holds for a sequence $S_n$ of asymptotically geodesic surfaces, that are not assumed to be minimal.   
\end{rem}

\begin{lem}\label{higherd}
Suppose $S_n$ is a sequence of asymptotically Fuchsian minimal surfaces that is not contained in a finite union of closed totally geodesic submanifolds of $M$. Then $S_n$ is asymptotically dense. 
\end{lem}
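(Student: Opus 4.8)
\textbf{Proof proposal for Lemma \ref{higherd}.}

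The plan is to prove the contrapositive: assuming $(S_n)$ is not asymptotically dense, I will show that its accumulation set $A\se \Gr M$ is contained in $\Gr_2 E$ for a finite union $E$ of \emph{proper} closed totally geodesic submanifolds of $M$, which is exactly what ``$(S_n)$ is contained in a finite union of closed totally geodesic submanifolds'' should mean here (and what is used in the theorem this lemma feeds into). By the equivalence recorded at the start of this section, ``not asymptotically dense'' means precisely $A\neq \Gr M$, and $A$ is automatically closed. The first step is to show $A$ is a union of complete geodesic planes. Since the $S_n$ are asymptotically Fuchsian they are asymptotically geodesic: the $L^\infty$ norms of their principal curvatures tend to $0$ (by \cite{Se} for minimal surfaces, or by \cite{epstein1986hyperbolic} in general). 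Hence, exactly as in the comparisons made throughout Section \ref{section:contra}, for each fixed $R>0$ and each $p\in A$ written as $p=\lim_n x_n$ with $x_n$ a tangent plane to $S_n$, the intrinsic disc $D^{S_n}_R(x_n)\se S_n$ converges in Hausdorff distance, as a subset of $\Gr M$, to the geodesic disc $\Delta_R(p)$. So every point of $\Delta_R(p)$ is a limit of tangent planes $x_n'\in S_n$, giving $\Delta_R(p)\se A$; letting $R\to\infty$ and using that $A$ is closed, the full geodesic plane $\Delta(p)=\bigcup_{R>0}\Delta_R(p)$ and its closure lie in $A$.

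Next comes the homogeneous dynamics. Write $M=\Gamma\backslash\bH^d$ with $\bH^d=G/K$, $G=\SO(d,1)^\circ$, and identify $\Gr M$ with $\Gamma\backslash G/Q$ where $Q$ is the compact stabilizer of a reference tangent $2$-plane; with $H=\SO(2,1)^\circ\leq G$ the stabilizer of the corresponding totally geodesic $\bH^2$, the geodesic plane through a point $\Gamma g Q$ is the image in $\Gr M$ of $\Gamma g H$. The first step then says precisely that the preimage $\mathcal A\se\Gamma\backslash G$ of $A$ is a closed, right-$H$-invariant set. By Ratner's orbit closure theorem --- this is where we need more than the Ratner--Shah statement \cite{R},\cite{S} available in dimension $3$ --- the closure of each $H$-orbit in $\Gamma\backslash G$ is a closed orbit $xL$ of a closed subgroup $H\leq L\leq G$ carrying an $L$-invariant probability measure, with $\Gamma\cap L$ (suitably conjugated) a cocompact lattice in $L$; the possible $L$ are, up to conjugacy, those whose semisimple part is $\SO(k,1)$ for some $2\le k\le d$. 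Each such orbit descends in $\Gr M$ to $\Gr_2 N$ for a closed totally geodesic submanifold $N\se M$ with $\dim N=k$, and $N=M$ exactly when $L=G$. Since $\mathcal A$ is a closed union of $H$-orbits, $A=\bigcup_{x\in A}\Gr_2 N_x$ for such submanifolds $N_x$, and because $A\neq \Gr M$ every $N_x$ is proper.

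It remains to extract finiteness, which is where the Mozes--Shah theorem \cite{MS} enters, just as it does for Lemma \ref{epsdensearith}. Among the $N_x$ only finitely many maximal ones (with respect to inclusion) can occur: if $N^{(1)},N^{(2)},\dots$ were infinitely many distinct maximal ones, their homogeneous probability measures $\mu_j$ on $\Gamma\backslash G$ would subconverge (no escape of mass since $\Gamma$ is cocompact) to a homogeneous probability measure $\mu$, with $\supp\mu_j$ eventually contained in $\supp\mu$ up to a correction tending to the identity. If $\mu$ is Haar, the $N^{(j)}$ become $\eps$-dense in $\Gr M$ for every $\eps$, forcing $A=\Gr M$, a contradiction; if $\supp\mu$ descends to $\Gr_2 N_\infty$ for a proper closed totally geodesic $N_\infty$, then $N^{(j)}\se N_\infty$ for all large $j$, contradicting maximality. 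Hence $A=\Gr_2 N_1\cup\dots\cup\Gr_2 N_m$ with each $N_i$ a proper closed totally geodesic submanifold, so $(S_n)$ is contained in the finite union $E=N_1\cup\dots\cup N_m$ of closed totally geodesic submanifolds, against hypothesis. (If one insists on a literal ``each $S_n\se N_1\cup\dots\cup N_m$'' statement, one additionally applies a stratified version of Claim \ref{homotope}/Lemma \ref{isolate} to $S_n$, whose Grassmann lift for large $n$ lies in an arbitrarily small neighborhood of $\Gr_2 E$, to push it homotopically into $E$.)

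The routine parts are the curvature comparison in the first step, which repeats arguments already made in Section \ref{section:contra}, and the bookkeeping identifying $H$-orbit closures with Grassmann lifts of totally geodesic submanifolds; here one should check with some care that $Q$ meshes with $H$ --- their product being the reductive subgroup $\SO(2,1)\times\SO(d-2)$ --- so that the first step indeed makes $\mathcal A$ exactly $H$-invariant. I expect the main obstacle to be the third step: extracting finiteness from Mozes--Shah in the presence of the whole ladder of intermediate dimensions $3\le k\le d-1$, which has no counterpart in the three-dimensional argument (where the only intermediate geodesic submanifolds are surfaces), and ensuring that the ``correction tending to the identity'' in the Mozes--Shah limit does not interfere with the maximality contradiction.
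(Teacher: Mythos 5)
Your route is genuinely different from the paper's: you argue the contrapositive through the accumulation set, using Ratner's orbit closure theorem and a Mozes--Shah finiteness argument (close in spirit to the paper's later analysis in Section \ref{section:possacc}), whereas the paper argues directly: for each $\eps$ it produces a finite exceptional union $E$ of proper totally geodesic submanifolds so that large geodesic discs based at $2$-planes $\delta$-far from $\Gr_2 E$ are $\eps$-dense (Lemma \ref{epsdensehigher}), it finds on each $S_n$ a tangent plane $\delta$-far from $\Gr_2 E$ (Lemma \ref{higherisolate}, together with the fact that an essential \emph{minimal} surface homotopic into a totally geodesic submanifold already lies in it), and it transfers $\eps$-density to $S_n$ via the curvature decay from \cite{j}. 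Your first three steps (closedness and geodesic-plane saturation of the accumulation set, homogeneity of $H$-orbit closures, finiteness of maximal submanifolds) are sound and would survive scrutiny.

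The genuine gap is in your last step. From $A\se \Gr_2 E$ you cannot conclude that the Grassmann lifts of the $S_n$ eventually lie in a small neighborhood of $\Gr_2 E$, nor that the $S_n$ are contained in $E$: the accumulation set as defined in the paper consists of limits of \emph{full} sequences $x_n\in S_n$ (a lower limit), so a tangent plane to $S_n$ staying $\delta$-far from $\Gr_2 E$ along a subsequence yields only a subsequential limit, which need not belong to $A$. Indeed $A$ is automatically contained in the closed set that any single subsequence stays inside; for instance, if the even-indexed surfaces were all covers of one closed totally geodesic surface, then $A$ would lie in that surface's Grassmann lift no matter how wildly the odd-indexed surfaces behave, so ``$A\se\Gr_2 E$'' simply does not control the sequence. (A second, smaller point: even where your parenthetical applies, pushing $S_n$ homotopically into $E$ only contradicts a hypothesis about set containment after invoking minimality, as the paper does.) The repair is to use the actual content of non-density: fix an open $U$ and a subsequence whose lifts miss $U$, run your steps on the set of \emph{subsequential} limits of that subsequence (which is closed, avoids $U$, and is still saturated by geodesic-plane closures), and then the compactness argument does force those surfaces into small neighborhoods of $\Gr_2 E$ and, via the projection argument of Claim \ref{homotope} plus minimality, into $E$ itself -- or, more simply, reorganize into the paper's direct argument, which applies the hypothesis surface by surface and avoids the issue entirely.
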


\begin{proof}
The proof of the Lemma makes use of the Ratner-Shah theorem and closely follows the proof of Theorem \ref{dense} in three dimensions.

Following similar arguments to the proofs of Lemmas \ref{epsdense}, \ref{epsdensearith}, we can show: 

\begin{lem}\label{epsdensehigher}
For every $\eps>0$ and $\delta>0$, there is a finite union of totally geodesic submanifolds $E\se M$ with dimensions in $\{2,\ldots,d-1\}$ and $R_0 (\eps,\delta,E)>0$ so that if $R>R_0$ and $p$ is a tangent 2-plane to $M$ at a distance of at least $\delta$ from every tangent 2-plane to $E$, then $\Delta_R(p)$ is $\eps$-dense in $\Gr_2 M$.
\end{lem}

We can also prove the following lemma analogous to Lemma \ref{isolate}. 

\begin{lem}\label{higherisolate} 
Suppose $E\se  M$ is a finite union of proper totally geodesic submanifolds of $M$ and let $S$ be an essential surface of $M$ that is not homotopic to a surface contained in $E$. Then, there is $\eps=\eps(E)>0$ so that  there is a tangent 2-plane $p$ to $S$ at a distance of at least $\eps$ from every tangent 2-plane to $E$.
\end{lem}

\begin{proof}
It suffices to show that for $\eps>0$ small enough, if $d_{\Gr_2(M)}(S,E) \leq \eps$, then $S$ can be homotoped to a surface contained in $E$. To do so, as in the proof of Claim \ref{homotope}, we let $\pi:S\to E$ be the nearest point projection, which is well-defined if  $\eps>0$ was chosen small enough. We homotope $S$ into $E$ by having each point $x\in S$ traverse the geodesic joining $x$ to $\pi(x)$.
\end{proof}

We may now finish the proof of Lemma \ref{higherd}. Let $\eps>0$ and $\delta>0$. By Lemma \ref{epsdensehigher}, we may choose a finite union of totally geodesic submanifolds $E\se  M$ and and $R>0$ so that whenever $d_{\Gr_2 M}(p,E) > \delta$, then the geodesic disc $\Delta_R(p)$ is $\eps$-dense in $\Gr M$.

From Lemma \ref{higherisolate}, we know that there is a sequence of tangent planes $p_n\se S_n$ with $d_{\Gr_2 M}(p_n,E)\geq \delta$. Here we are using the fact that an essential minimal surface can be homotoped into a totally geodesic submanifold if and only if it is already contained in it.  

From the work of Jiang (section 2 of \cite{j}) we know that for $n$ large enough the $S_n$ are smooth surfaces without branch points, and that their principal curvatures converge uniformly to zero as $n\to\infty$. In particular, if we denote by $D^n_R(q)$ the intrinsic disc in $S_n$ of radius $R$ passing through $q$, we have that for $n$ large enough, $d_{\Gr_2 M}(D^n_R(p_n), \Delta_R(p_n))\leq \eps$.

Thus, there is $N$ so that for $n\geq N$, the intrinsic discs $D^n_R(p_n)$ are $\eps$-dense in $\Gr M$. In particular, $S_n$ are $2\eps$-dense for $n\geq N$.
\end{proof}

Lemma \ref{higherd} above shows that if $(S_n)$ are asymptotically Fuchsian minimal surfaces that are \emph{not} asymptotically dense, then their accumulation set is contained in a finite union $E$ of totally geodesic submanifolds. 

Denote by $\mathcal{A}$ the accumulation set of the $S_n$.  Let $\mathcal{G}$ be the set of all finite unions of totally geodesic submanifolds that contain $\mathcal{A}$.  We claim that the intersection $F$ of the sets in $\mathcal{G}$ is also an element of $\mathcal{G}$.  To see this, enumerate the elements of $\mathcal{G}$ as $g_1,g_2,..$.  We can write the intersection $F=\cap g_i$ as the intersection of the descending chain of sets $F_i:=\cap_{k=1}^i g_i$: 
\[
F=\cap_{i=1} ^\infty F_i. 
\]

We know that $F_{i+1} \subset F_i$, and that either $F_i=F_{i+1}$, or else the sum of the dimensions of the connected totally geodesic submanifolds of $F_{i+1}$ is smaller than those of $F_i$.  Therefore the sequence of $F_i$ is eventually constant, and equal to $F$.  

\subsection{Claim: $F=\mathcal{A}$}  Assume for contradiction that $\mathcal{A}$ is a proper subset of $F$, and let $\Sigma$ be a closed totally geodesic submanifold of $F$ that contains a point that is not contained in $\mathcal{A}$.  We know that $\mathcal{A} \cap \Sigma$ is not contained in a union of closed totally geodesic submanifolds of $\Sigma$, otherwise we could take $F$ to be a smaller set.  

If there were some tangent plane $\Pi$ to $\Sigma$ in $\mathcal{A}$ that was not contained in any closed totally geodesic submanifold of $\Sigma$, then the totally geodesic plane tangent to $\Pi$ would be dense in $\Sigma$ by Ratner's theorems.  Since $\Pi$ is in the accumulation set of the $S_n$ and the $S_n$ are asymptotically geodesic, this implies that all of $\Sigma$ is in the accumulation set of the $S_n$, which finishes the argument in this case.

Assume on the other hand that each tangent plane $\Pi$ in $\mathcal{A}$ to $\Sigma$  is contained in a closed totally geodesic $\Sigma_{\Pi}$ that is a proper submanifold of $\Sigma$, so that no closed totally geodesic submanifold of $\Sigma_{\Pi}$ contained the totally geodesic plane tangent to $\Pi$. Then since the totally geodesic plane tangent to $\Pi$ is dense in $\Sigma_{\Pi}$ by Ratner's theorems, in the same way as in the previous paragraph all of $\Sigma_{\Pi}$ must be contained in $\mathcal{A}$.  This shows that $\mathcal{A} \cap \Sigma$ is a union of closed totally geodesic submanifolds. 

Recall that a closed totally geodesic submanifold in $\mathcal{A} \cap \Sigma$ is \textit{maximal} if it is not contained in a closed totally geodesic submanifold of $\mathcal{A} \cap \Sigma$ of higher dimension. Then every closed totally geodesic submanifold of $\mathcal{A} \cap \Sigma$ is contained in a maximal totally geodesic submanifold of $\mathcal{A} \cap \Sigma$.  We claim that there are only finitely many maximal totally geodesic submanifolds of $\mathcal{A} \cap \Sigma$.  If not, then we can choose a sequence $\Sigma_1'$, $\Sigma_2'$,.. of them.  By Ratner's theorems, the $\Sigma_i'$ must be dense in a closed totally geodesic submanifold $\Sigma'$ of $\Sigma$.  Reasoning similar to before shows that $\Sigma'$ is contained in $\mathcal{A} \cap \Sigma$, which contradicts maximality of the $\Sigma_i'$.  

Therefore there are only finitely many maximal totally geodesic submanifolds of $\mathcal{A} \cap \Sigma$. Since $\mathcal{A} \cap \Sigma$ is a union of closed totally geodesic submanifolds and each closed totally geodesic submanifold of $\mathcal{A} \cap \Sigma$ is contained in a maximal such, $\mathcal{A} \cap \Sigma$ is equal to the union of the maximal totally geodesic submanifolds it contains, of which there are only finitely many.  Since $\mathcal{A} \cap \Sigma$ is not contained in finite union of closed totally geodesic submanifolds of $\Sigma$, the only possibility for $\mathcal{A} \cap \Sigma$ is that it is equal to $\Sigma$, which finishes the proof.

\section{Realizing accumulation sets in higher dimensions} \label{section:higherintro}

In this section, we will prove Theorem \ref{higherintro}, restated below for convenience.  

\begin{thm}\label{higher}
Suppose $M$ is a closed hyperbolic $d$-manifold containing two closed totally geodesic 3-dimensional submanifolds $N_1$ and $N_2$ that intersect along a closed totally geodesic surface.

Then, there exists a sequence $(S_n)$ of closed connected asymptotically Fuchsian pleated surfaces  whose accumulation set is $N_1\cup N_2$. There also exists a sequence $(S_n)$ of closed connected asymptotically geodesic minimal surfaces whose limit in the Hausdorff topology of $M$ is $N_1\cup N_2$.
\end{thm}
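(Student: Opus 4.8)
The plan is to produce the asymptotically Fuchsian \emph{pleated} surfaces by a cut-and-paste surgery on Kahn--Markovi\'{c}-style ``good pants'' surfaces, and then to pass to their minimal representatives. Fix $R_n\to\infty$. For each $i\in\{1,2\}$, the Kahn--Markovi\'{c} surface subgroup theorem \cite{kahn2012immersing}, Kahn--Wright \cite{KW}, and the Liu--Markovi\'{c} connectedness argument produce a sequence $S^i_n\se N_i$ of distinct closed connected pleated surfaces, each assembled from good pants with all cuff lengths near $2R_n$ and all cuff gluings good, and hence $(1+\eps_n)$-quasifuchsian inside $N_i$ --- and, since $N_i$ is totally geodesic in $M$, inside $M$ --- with $\eps_n\to 0$. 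Being distinct asymptotically Fuchsian surfaces in the closed hyperbolic $3$-manifold $N_i$, their minimal representatives are asymptotically dense in $\Gr N_i$ by Corollary \ref{dense}; since (see Step 4) the minimal representative is within Hausdorff distance $o(1)$ of the pleated surface, also after lifting to $\Gr N_i$, the pleated surfaces $S^i_n$ are themselves asymptotically dense in $\Gr N_i\se \Gr M$.

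\textbf{Step 2: a common bridge inside $W=N_1\cap N_2$.} Running Kahn--Markovi\'{c}'s construction from their proof of the Ehrenpreis conjecture \cite{KME} on the closed hyperbolic surface $W$, for every large $R$ we obtain good pants in $W$ with cuff lengths near $2R$; in particular we may choose a closed geodesic $\gamma=\gamma_R\se W$ of length near $2R$, two good pants $P_L,P_R\se W$ sharing $\gamma$ as a cuff, and a good gluing $g_0$ of $P_L$ to $P_R$ along $\gamma$. Because $W$ is totally geodesic in \emph{both} $N_1$ and $N_2$, the pants $P_L$ and $P_R$ are good pants in $N_1$, in $N_2$, and in $M$, and $P_L\cup_{\gamma,g_0}P_R$ is a good gluing in each. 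The Kahn--Markovi\'{c}--Kahn--Wright assembly is flexible enough to rebuild, for $R=R_n$, the surfaces $S^i_n$ of Step 1 so that each contains $P_L\cup_{\gamma_n,g_0}P_R$ as a subsurface (with the pants adjacent to $\gamma_n$ in $S^i_n$ being exactly $P_L,P_R$, glued by $g_0$) and with $\gamma_n$ non-separating in $S^i_n$, while remaining connected with all other cuff gluings good; this does not affect the conclusion of Step 1, which depended only on distinctness and asymptotic Fuchsian-ness.

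\textbf{Step 3: the surgery, and Step 4: minimal surfaces.} For each $n$, cut both $S^1_n$ and $S^2_n$ along $\gamma_n$ and reglue the two resulting surfaces-with-boundary to one another crosswise, reinstating the gluing $g_0$ along each of the two new cuffs; call the result $S_n$. Since $\gamma_n$ is non-separating in each $S^i_n$, $S_n$ is a single closed connected surface; near the regluing locus it coincides with the piece $P_L\cup_{\gamma_n,g_0}P_R\se W$, a good gluing, while away from it it is built from the good pants of $S^1_n$ and $S^2_n$ with their good gluings, so $S_n$ is globally assembled from good pants with all cuff gluings good and is therefore $(1+\delta_n)$-quasifuchsian with $\delta_n\to 0$ by the main theorem of \cite{kahn2012immersing}, i.e.\ asymptotically Fuchsian. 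Since cutting along $\gamma_n$ alters only a set of measure zero, the image of $S_n$ in $M$ equals the union of the images of $S^1_n$ and $S^2_n$, a subset of $N_1\cup N_2$; hence the accumulation set of the $\Gr M$-lift of $S_n$ is contained in $\Gr N_1\cup\Gr N_2$, and by Step 1 it equals $\Gr N_1\cup\Gr N_2$. Thus $(S_n)$ has accumulation set $N_1\cup N_2$, which, being closed and containing every $S_n$, is also its Hausdorff limit in $M$. Finally, for $\delta_n$ small let $S_n'\se M$ be the minimal surface homotopic to $S_n$, unique by Uhlenbeck \cite{U} and Seppi \cite{Se} and asymptotically geodesic; standard nearly-Fuchsian estimates (in the cover of $M$ corresponding to $\pi_1 S_n$, the pleated plane and the minimal plane both lie in the convex hull of the $O(\delta_n)$-nearly-circular limit set, hence are $O(\delta_n)$-close graphs over a common totally geodesic plane) give $d_H(S_n,S_n')\to 0$ in $M$, so $(S_n')$ also Hausdorff-converges to $N_1\cup N_2$.

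\textbf{The main obstacle.} The crux is Step 3, and the reason Step 2 is needed: one must keep the surgered surface $(1+\delta_n)$-quasifuchsian. I expect two points to require the most care. First, the ``bridge'' $P_L\cup P_R$ must be \emph{simultaneously} good in $N_1$, $N_2$ and $M$ and built from the same-scale blocks as the $S^i_n$; this is precisely where it is used that $W$ is totally geodesic in both $N_i$ and that \cite{KME} produces good pants in $W$ at every scale $R_n$. Second, one must carry out the combinatorial bookkeeping to realize $S^1_n$ and $S^2_n$ as connected, asymptotically dense surfaces that each contain the prescribed bridge with a common cuff that is non-separating in both (or, alternatively, to route the connecting surgery through a finite cover of $W$ assembled from the same good pants, as in \cite{KME}); this is where the flexibility of the Kahn--Markovi\'{c}--Kahn--Wright assembly and the Liu--Markovi\'{c} connectedness argument enter.
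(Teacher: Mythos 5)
Your overall strategy is the paper's: build connected good-pants surfaces $S^i_n\se N_i$ via Kahn--Wright plus the Liu--Markovi\'{c} connectedness argument, splice them together through good pants lying in $T=N_1\cap N_2$ supplied by the Ehrenpreis theorem \cite{KME}, observe that the surgered surface is still assembled from good pants with good gluings (hence asymptotically Fuchsian by \cite{KW}), note that the image is unchanged by the surgery so the accumulation set is $N_1\cup N_2$, and then pass to minimal representatives. The one genuine divergence is the implementation of the splice, and it is exactly the step you flag as the main obstacle. You ask to \emph{rebuild} $S^1_n$ and $S^2_n$ so that each contains a prescribed block $P_L\cup_{g_0}P_R\se T$, with $P_L,P_R$ glued by the same $g_0$ across a prescribed cuff that is moreover non-separating in both surfaces; this amounts to modifying the Kahn--Wright matching (and the Liu--Markovi\'{c} bookkeeping) to contain a prescribed gluing, which you assert ("the assembly is flexible enough") but do not prove. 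The paper sidesteps this: it takes as a separate closed surface the finite cover $\hat T$ of $T$ carrying an $(\eps,R)$-good pants decomposition $\Pi_T$ from \cite{KME}, picks a cuff $\gamma_1$ of $\Pi_T$ with its two adjacent pants $\pi^{\pm}\se T$, and then uses equidistribution of feet of good pants of $N_1$ along $\gamma_1$ (Theorem 3.3 of \cite{KW}) to find, \emph{a posteriori inside the already-built} $S_1$, a glued pair $p^{\pm}$ whose feet are $\eps/R$-close to those of $\pi^{\pm}$; cutting $S_1$ and $\hat T$ along $\gamma_1$ and cross-gluing $p^-$ to $\pi^+$ and $\pi^-$ to $p^+$ is then a $(2\eps,R)$-good gluing, and the same is repeated at a second cuff $\gamma_2$ of $\Pi_T$ to attach $S_2$. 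So the bridge is the whole surface $\hat T$ with two surgeries rather than a two-pants bridge with one crosswise surgery; the payoff is that nothing in the matchings has to be prescribed in advance, and the compatibility of feet at the splicing curve (which in your single-surgery version must hold simultaneously for pants of $N_1$ and of $N_2$, whose feet live on different normal tori inside the unit normal bundle of $\gamma$ in $M$ when $d\ge 4$) is mediated by pants lying in $T$ itself. To make your version complete you would either have to prove the prescription claim (including the non-separating condition you need for connectedness), or replace Step 2--3 by the paper's a posteriori feet-equidistribution argument, which is the cleaner route.

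Two smaller remarks. For the quasifuchsian conclusion in $M$ with $d\ge 4$ you should cite the Kahn--Wright theorem (Theorem 2.2 of \cite{KW}) rather than \cite{kahn2012immersing}, which is three-dimensional. And for the minimal surfaces the paper does not use a uniform Hausdorff estimate between the pleated surface and its minimal representative as you propose; it argues by contradiction and compactness using Jiang \cite{j}: lifts of the minimal surfaces have limit sets converging to round circles and converge smoothly to totally geodesic disks, while the corresponding lifts of the pleated surfaces converge to the same disks, which forces the two sequences to have the same accumulation set. Your width-of-the-convex-hull argument is plausible, but in dimension $d\ge 4$ the uniform width bound in terms of the quasiconformal constant is itself something you would need to justify, so the compactness argument is the safer one.
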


This is in contrast with the case where all the $S_n$ are totally geodesic. In that setting, using the work by Mozes-Shah \cite{MS} we can prove the following proposition.  

\begin{prop} \label{prop:totallygeodesicaccumulation}
Let $M$ be a closed hyperbolic $d$-manifold. The accumulation set of a sequence $S_n\se M$ of closed totally geodesic surfaces consists of a closed connected totally geodesic submanifold $S$ (i.e., a totally geodesic submanifold $S$ whose lift to the Grassmann bundle of tangent $dim(S)$ planes to $M$ is connected.)  
\end{prop}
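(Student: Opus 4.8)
The plan is to translate the statement into homogeneous dynamics and feed it through the equidistribution theorem of Mozes--Shah \cite{MS} together with Ratner's theorems. Write $M=\Gamma\backslash\bH^d$ with $\Gamma\le G:=\SO(d,1)^\circ$, and let $X=\Gamma\backslash G$, a compact manifold. Each closed totally geodesic surface $S_n\se M$ lifts to a closed orbit $Y_n\se X$ of the subgroup $H\cong\SO(2,1)^\circ\times\SO(d-2)$ of frames adapted to a totally geodesic copy of $\bH^2$, carries the $H$-invariant probability measure $\mu_n$ supported on $Y_n$, and has tangent-plane lift $\widehat{S_n}\se\Gr_2 M$ equal to the image of $Y_n$. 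Recall that the accumulation set $\mathcal A$ is the Kuratowski lower limit of the $\widehat{S_n}$, so that $x\in\mathcal A$ exactly when $d(x,\widehat{S_n})\to 0$, and that $\mathcal A$ is closed.

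First I would record that $\mathcal A$ is saturated by geodesic planes, equivalently that its lift $\widehat{\mathcal A}\se X$ is a closed $H$-invariant set: formally, if $y=\lim_n y_n$ with $y_n\in Y_n$ and $h\in H$, then $yh=\lim_n y_nh$ with $y_nh\in Y_n$ since $Y_n$ is an $H$-orbit; geometrically, because the $S_n$ are totally geodesic, a radius-$R$ disc of $S_n$ passing near $x\in\mathcal A$ is an immersed totally geodesic disc nearly tangent to $x$, so letting $n\to\infty$ and then $R\to\infty$ puts the whole totally geodesic plane through $x$ in $\mathcal A$. Next, by weak-$\ast$ compactness of the space of probability measures on the compact space $X$ and the absence of escape of mass, every subsequence of $(\mu_n)$ has a further subsequence with $\mu_{n_k}\to\mu_Z$; since $\SO(2,1)^\circ\le H$ is generated by unipotents, Mozes--Shah gives that $\mu_Z$ is the homogeneous probability measure on a single closed orbit $Z=zL$ with $L\supseteq H$, and moreover that $Y_{n_k}\se Z$ for all large $k$. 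The orbit $Z$ projects to a closed connected totally geodesic submanifold $N_Z\se M$, and along $(n_k)$ the inclusions $S_{n_k}\se N_Z$ together with $\mu_{n_k}\to\mu_Z$ force $\widehat{S_{n_k}}\to\widehat{N_Z}$ in the Hausdorff metric (a $\delta$-ball of $\widehat{N_Z}$ missed by $\widehat{S_{n_k}}$ would contradict $\mu_{n_k}\to\mu_Z$).

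From this I would deduce $\mathcal A=\bigcap_{Z\in\mathcal L}\widehat{N_Z}$, where $\mathcal L$ is the set of closed orbits supporting some subsequential weak-$\ast$ limit of $(\mu_n)$. The inclusion $\subseteq$ is immediate, since $\widehat{S_n}\se\widehat{N_Z}$ for all large $n$ along the subsequence defining $Z$; for $\supseteq$, if $d(x,\widehat{S_{n_j}})\ge\eps$ along a subsequence, pass to a further subsequence with $\mu_{n_j}\to\mu_{Z'}$ and use the Hausdorff convergence above to get $d(x,\widehat{N_{Z'}})\ge\eps$, so that $x\notin\widehat{N_{Z'}}$. A second application of Mozes--Shah shows $\mathcal L$ is finite: if it were infinite, a subsequence of its members would equidistribute in a strictly larger closed orbit which, by diagonal extraction, would itself lie in $\mathcal L$ and eventually contain the others, and this cannot recur indefinitely since dimensions are bounded. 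Hence $\mathcal A$ is a finite intersection of closed totally geodesic submanifolds, in particular a closed totally geodesic submanifold of $M$; and if $\mathcal L$ consists of a single $Z$ we moreover get $\mathcal A=\widehat{N_Z}$ with $3\le\dim N_Z\le d$, the lower bound holding because a hyperbolic surface carries no infinite family of distinct closed totally geodesic surfaces.

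The step I expect to be the main obstacle is the final word, \emph{connected}: one must exclude the possibility that $(\mu_n)$ splits into subsequences equidistributing in two totally geodesic submanifolds whose intersection is disconnected, in which case the displayed intersection would be disconnected. I would attack this by combining the $H$-invariance of $\widehat{\mathcal A}$ (so that, with the finiteness above and Ratner's theorems, $\widehat{\mathcal A}$ is a finite union of closed homogeneous orbits) with the connectedness of the $S_n$: if $\widehat{\mathcal A}=P\sqcup Q$ with $d(P,Q)>0$, then for all large $n$ the connected set $\widehat{S_n}$ meets disjoint neighbourhoods of $P$ and of $Q$, and I would push the argument of Section~\ref{section:possacc} (the ``$F=\mathcal A$'' argument)---now with genuine equidistribution of the $S_n$ in hand, rather than merely their asymptotic flatness---to a contradiction. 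For the use made of the proposition, as a contrast with Theorem~\ref{higher}, the essential content is the conclusion already reached, namely that $\mathcal A$ is always a totally geodesic submanifold and never a non-submanifold such as $N_1\cup N_2$; and when $(S_n)$ converges in the Hausdorff topology---the case relevant there---the difficulty disappears, because then $\mathcal L$ is a singleton and $\mathcal A=\widehat{N_Z}$ is the single connected totally geodesic submanifold claimed.
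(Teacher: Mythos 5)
Your reduction to homogeneous dynamics is the same as the paper's: both pass to closed $H$-orbits in $\Gamma\backslash G$, apply Mozes--Shah \cite{MS} to subsequential weak-$\ast$ limits of the homogeneous measures $\mu_n$, use the containment of the orbits $\hat S_{n_k}$ in the support of each limit for large $k$, and identify the accumulation set with the projection of $\bigcap_{\mu}\supp(\mu)$ to $\Gr_2 M$; this part of your write-up matches the paper's Claim and its proof essentially verbatim, including the contradiction argument for the reverse containment.

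The gap is exactly where you flag it, and it is not peripheral: the content of Proposition \ref{prop:totallygeodesicaccumulation} is that the accumulation set is a \emph{single connected} totally geodesic submanifold (this is the whole point of the contrast with Theorem \ref{higher}, where the limit is $N_1\cup N_2$), and your proposal does not prove this. The paper closes the step structurally rather than topologically: the intersection $\bigcap_{\mu}\supp(\mu)$ is identified with a homogeneous set for the connected subgroup $\bigcap_{\mu}L_{\mu}\supseteq H$ of $G$, so its projection is the Grassmann lift of one totally geodesic submanifold and connectedness comes from the group-theoretic description. Your substitute sketch has two concrete problems. First, the auxiliary claim that the set $\mathcal L$ of limit orbits is finite is not established by the chain argument and is false in general: interleaving an equidistributing family as $\Sigma_1,\Sigma_1,\Sigma_2,\Sigma_1,\Sigma_2,\Sigma_3,\dots$ makes every $\mu_{\Sigma_i}$, as well as the uniform measure, a subsequential limit (a variant with distinct surfaces lying on infinitely many distinct $3$-dimensional $N_i$ does the same); fortunately the identity $\mathcal A=\bigcap_{Z}\widehat{N_Z}$ does not need finiteness, but ``finite intersection of totally geodesic submanifolds, in particular a totally geodesic submanifold'' is then unsupported, since such an intersection can be disconnected and of mixed dimension. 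Second, the proposed connectedness argument --- if $\widehat{\mathcal A}=P\sqcup Q$ then the connected $\widehat{S_n}$ meets neighbourhoods of both, contradiction --- does not close: the intermediate points of $\widehat{S_n}$ it produces only yield \emph{subsequential} accumulation points (the Kuratowski upper limit), whereas $\mathcal A$ is the lower limit, so nothing contradicts $\mathcal A=P\sqcup Q$. Some version of the paper's identification of $\bigcap_{\mu}\supp(\mu)$ as an orbit-type set for $\bigcap_{\mu}L_{\mu}$ is needed, and it is missing from your argument.
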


\begin{proof}[Proof of Proposition \ref{prop:totallygeodesicaccumulation}]
Let $G = \SO_0(d,1)$ and $H = \SO_0(2,1) \leq G$.

For each $n$, let $\hat{S}_n$ be a closed $H$-orbit in $\Fr M = \Gamma\backslash G$ whose projection to $\Gr_2 M$ is equal to $S_n$. Let $\mu_n$ be the homogeneous probability measure supported on $\hat{S}_n$.

\begin{claim*}
Let $\pi: \Fr M \to \Gr_2 M$ denote the projection sending a frame $(p,v_1,\ldots,v_d)$ to $(p, \Span\{v_1,v_2\})$. Then, 
\[
\acc(\{S_n\}) = \pi \lef( \bigcap_{\mu} \supp(\mu) \ri),
\]
where $\mu$ ranges over all weak-* limits of subsequences of $(\mu_n)$.
\end{claim*}

The theorem of Mozes-Shah implies that each $\mu$ is the homogeneous measure supported on a closed orbit of some closed Lie subgroup $L_{\mu} \leq G$ properly containing $H$. In particular, the projection of $\cap_{\mu}\supp(\mu)$ to $\Gr_2 M$ is connected as the 2-plane Grassmann bundle to a totally geodesic submanifold $N\se M$ that corresponds to the connected subgroup of $G$ obtained as the intersection of the $L_{\mu}$, which proves the proposition assuming the claim.

We now prove the claim. 

Let $\mu_{n_k}\wkstar \mu$ denote a convergent subsequence of $(\mu_n)$. As $\mu_{n_k}$ is the homogeneous measure supported on $\hat{S}_{n_k}$, if $x_{n_k}\in \hat{S}_{n_k}$ is a convergent sequence, then $\lim_{k\to\infty} x_{n_k} \in \supp \mu$.  We are using here the fact from \cite{MS} that the closed subgroups $L_{\mu_{n_k}}$ corresponding to the $\mu_{n_k}$ are contained in the closed subgroup $L_{\mu}$ corresponding to $\mu$ for large enough $k$. Conversely, any point in $\supp \mu$ can be realized as a limit of $x_{n_k}\in \hat{S}_{n_k}$, or else there would be a neighborhood of that point to which $\mu$ assigned zero measure which is impossible. Therefore, $\acc(\hat{S}_{n_k}) = \supp\mu$. 

We conclude by noting that $\acc(\hat{S}_n)$ is the intersection of $\acc(\hat{S}_{n_k})$ over all subsequences $(\hat{S}_{n_k})$ such that $\mu_{n_k}$ is convergent.  That the former is contained in the latter is clear.  To see the other containment, assume for contradiction that there is a point $p$ in the intersection of $\acc(\hat{S}_{n_k})$ over all subsequences $(\hat{S}_{n_k})$ such that $\mu_{n_k}$ is convergent, but that $p$ is not contained in $\acc(\hat{S}_n)$. If this is the case, then there is some $\epsilon>0$ so that for any $N$, there is some $k>N$ so that every point on $\hat{S}_k$ is at a distance of at least $\epsilon$ from $p$.  Passing to a convergent subsequence of the probability measures corresponding to the $\hat{S}_k$, we obtain a weak-* limit measure that does not contain $p$, which is a contradiction.   
 \end{proof}

\subsection{Manifolds satisfying the hypotheses of Theorem \ref{higher}}

The lattices in hyperbolic space we consider will be arithmetic of simplest type.  We give now a  description of their basic properties and facts about them that will be relevant to our construction.  We follow closely the reference \cite{millson}[Section 2], see that paper for further details and references.

We give the construction only in the four-dimensional case, as the higher dimensional case is very similar. but  Let $q = x_1^2 + \cdots + x_4^2 - \sqrt{p} x_{5}^2$, for $p$ prime. Then, $ \SO(q,\bZ[\sqrt{p}])$ is a lattice in $\SO(q,\bR)$. A quadratic form is \textit{anisotropic} if it does not represent zero rationally.  Since $q$ is anisotropic, the lattice  $ \SO(q,\bZ[\sqrt{p}])$ is cocompact.  There is a real invertible matrix $T$ so that $T^t q T=q_{0}$, for $q_0$ the standard signature $(n,1)$ quadratic form.  Therefore $T^{-1} \SO(q,\bR) T = \SO(d,1)$, 
\[
\Gamma := T^{-1} \SO(q,\bZ[\sqrt{p}]) T \cap \SO_0(d,1)
\]
is a lattice in $\SO_0(d,1) = \Isom^+ \bH^d$, and $M:= \Gamma\backslash \bH^d$ is a closed hyperbolic $d$-manifold.

Let $q_1 = x_1^2 + x_2^2 + x_3^2 - \sqrt{p}x_{5}^2$ and $q_2 = x_2^2 + x_3^2 + x_4^2 - \sqrt{p}x_{5}^2$. Define \[\Gamma_i :=T^{-1} \SO(q_i, \bZ[\sqrt{p}]) T \cap \SO_0(d,1). \]
The groups $\Gamma_i$ can be seen as subgroups of $\Isom^+ H_i$, where $H_i$ is the hyperbolic 3-plane $\bH^d \cap \{x_1=0\}$ for $i=1$ and $\bH^d\cap \{x_2=0\}$ for $i=2$. In particular, $N_i:=\Gamma_i\backslash H_i$ are totally geodesic 3-suborbifolds of $M$. They meet orthogonally along the totally geodesic 2-orbifold $T = \Gamma\backslash (\bH^d \cap \{ x_1,x_2 = 0\})$.

The lattice $\Gamma$ may have torsion (i.e., $M$ may be an orbifold), but it is possible to pass to finite index subgroups that do not.  For a prime ideal $q$ in the ring of integers of $\bQ[\sqrt{p}]$, the congruence subgroup $\Gamma(q)$ of $\Gamma$ is defined as follows.  Note that all entries of each element of $\Gamma$ are algebraic integers in $\bQ[\sqrt{p}]$. We say that $\gamma \in \Gamma$ satisfies $\gamma \equiv 1 \text{ mod } q$ if each of its diagonal entries is congruent to 1 mod $q$, and each other entry off the diagonal is congruent to 0 mod $q$.  The matrices in $\Gamma(q)$ form a normal subgroup of $\Gamma$.  For all but finitely many $q$, $\Gamma(q)$ will be torsion free.   The lifts of $N_1$ and $N_2$ to $\Gamma(q) \backslash \mathbf{H}^d$ intersect in a totally geodesic surface each connected component of which finitely covers $N_1 \cap N_2$. This gives examples of manifolds satisfying the hypotheses of Theorem \ref{higher}.

\subsection{Proof of Theorem \ref{higher}} \label{section:pants}

\begin{figure}
\includegraphics[scale=0.8]{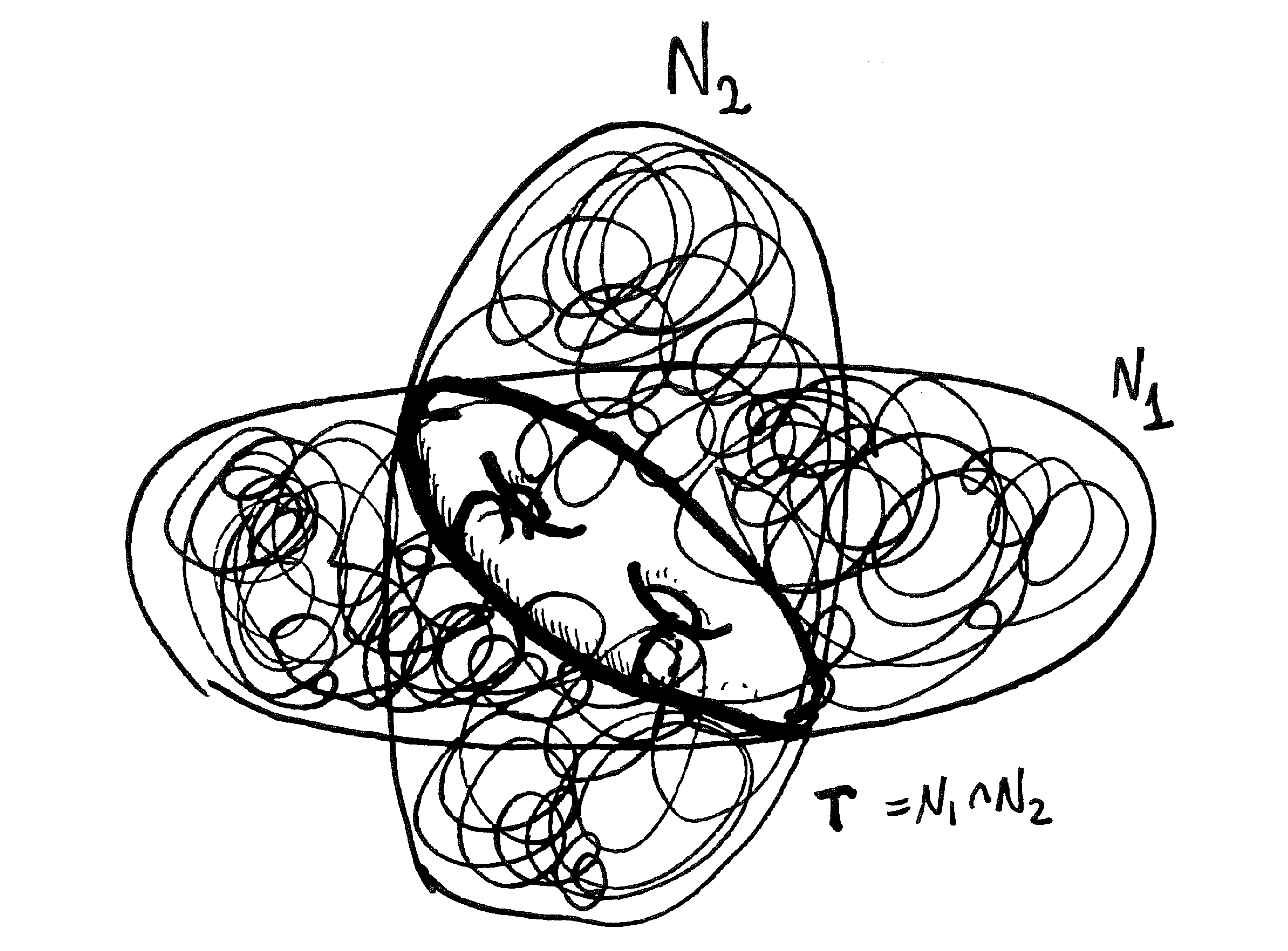}
\caption{The scrawly curve represents the surfaces $S(\eps_j,R_j)$, which become dense in $N_1\cup N_2$ as $j\to\infty$.}
\end{figure}

We follow the notation and terminology from \cite{a}. The $(\epsilon,R)$-good curves in M are the closed geodesics with complex translation length $2\epsilon$-close to
$2R$. The $(\epsilon,R)$-good
pants, are the maps $f : P \rightarrow M$ from a pair of pants $P$ taking the cuffs of $P$ to $(\epsilon,R)$-good curves.

From the work of Kahn-Wright \cite{KW}, given $\eps>0$ each submanifold $N_i$ contains a $(1+O(\eps))$-quasifuchsian pleated surface $S_i(\eps,R)$ that is made out of one copy of each $(\epsilon,R)$-good pants in the set of all  $(\epsilon,R)$-good pants in $N_i$, which we denote by $\pants(N_i)$.

Using the work of Liu-Marković \cite{LM}, one can show there are numbers $n_i(\eps,R)$ for $i=1,2$ so that there are \emph{connected} $(1+O(\eps))$-quasifuchsian surfaces, denoted also $S_i(\eps,R)$ made out of $n_i(\eps,R)$ copies of each good pants (see \cite{a}[Section 4].) 

Let $T$ be the totally geodesic surface along which $N_1$ and $N_2$ intersect. From the proof of the Ehrenpreis conjecture by Kahn-Marković \cite{KME}, we know that $T$ has a finite cover $\hat{T}$ admitting a pants decomposition $\Pi_T$ of $(\eps,R)$-good pants.

Choose a cuff $\gamma_1$ of $T$ that is part of $\Pi_T$ and let $\pi^-\in \pants^- (\gamma_1)$ and $\pi^+ \in \pants^+ (\gamma_1)$ be immersed pants in $T$ that are $(\eps,R)$-well glued along $\gamma_2$.

Using the fact that the pants in $\pants(N_1)$ are well-distributed along $\gamma_1$ (see Theorem 3.3 in \cite{KW}), we may find pants $p^-\in \pants^-(N_1)$ and $p^+\in \pants^+ (N_1)$ in $S_1(\eps,R)$ that are $(\eps,R)$-well glued along $\gamma_1$ and so that $\ft p^-$ is $\eps/R$-close to $\ft \pi^-$.

Thus we may cut $S_1$ and $T$ along $\gamma_1$ and glue $p^-$ to $\pi^+$ and $\pi^-$ to $p^+$ in a $(2\eps,R)$-good fashion. The reglued surface $S'$ will still be closed, connected and $1+O(\eps)$-quasifuchsian, by Theorem 2.2 from \cite{KW}.

We may choose another cuff $\gamma_2$ of $\Pi_T$ and similarly perform reglueings to join $S_2$ to $S'$ in a nearly Fuchsian way, giving us a new surface $S =S(\eps,R)$.

Taking a sequence $\eps_j \to 0$ and $R_j = R(\eps_j) \to \infty$, we obtain a sequence of asymptotically Fuchsian pleated surfaces $S(\eps_j,R_j)$ with accumulation set $N_1\cup N_2$.  It follows from \cite{j}[Section 2.3] that the $S(\eps_j,R_j)$ are homotopic to minimal surfaces $S_j$ without branch points.  Arguing as in \cite{j}  [Section 2.3, 3.2] and \cite{calegari2022counting}[Lemma 4.3], we can show that the $S_j$ have the same accumulation set as the $S(\eps_j,R_j)$.  To explain in more detail, one can argue by contradiction.  

Suppose that there is some $\epsilon>0$ and points $p_j$ on $S_j$ whose tangent planes are a distance of at least $\epsilon$ from any tangent plane to $S(\eps_j,R_j)$.  Choosing lifts $D_j$ of the $S_j$ to the universal cover that all intersect a fixed compact set containing lifts of $\tilde{p}$, we know that their limit sets are quasicircles that converge, up to passing to a subsequence, to a round circle that is the boundary at infinity of a totally geodesic disk $D$.  Jiang \cite{j} shows that the $D_j$ smoothly converge to $D$ on compact subsets. But it is also the case that the tangent planes to the corresponding lifts of the $S(\eps_j,R_j)$ Hausdorff converge to $D$ on compact subsets. Projecting $D_j$ and the lift of $S(\eps_j,R_j)$ back down to $M$ then gives a contradiction for large enough $j$.

\section{Rigidity of totally geodesic surfaces in negative curvature} \label{section:totallygeodesicvariablecurvature}
\subsection{} 
Given Corollary \ref{dense},  the proof of Theorem \ref{bigsurface} is essentially the same as the proof of \cite{lowe2023rigidity}[Theorem 1.5]. We restate Theorem \ref{bigsurface} for convenience.  

\begin{thm} 
Let $(M,g_{hyp})$ be a closed hyperbolic 3-manifold. Then there is $\epsilon>0$ and a finite collection of closed totally geodesic surfaces so that if $S$ is $1+\epsilon$-quasifuchsian surface that does not belong to the finite collection of totally geodesic surfaces, the following holds.  
Assume $g$ is a metric on $M$ with sectional curvature bounded above by -1 so that
\begin{itemize}
\item $\Sigma$ is a totally geodesic hyperbolic surface in $(M,g)$

\item There is a homotopy equivalence $\phi:M\to M$ so $\phi(\Sigma) = S$.
\end{itemize}
Then, $g$ is isometric to $g_{hyp}$ and $S$ is homotopic to a totally geodesic surface in $(M,g_{hyp})$.
\end{thm}

As before we can take $\Sigma$ to be the unique area-minimizing surface in its homotopy class and have principal curvatures as small as desired, by making $\epsilon$ small.  We say that a surface $\Sigma$ in $M$ satisfies the \textit{well-distribution property} if every point in $\tilde{M} \cong \mathbf{H}^3$ is contained in a solid cube whose faces are contained in lifts of $\Sigma$ to $\mathbf{H}^3$. It is a straightforward consequence of Theorem \ref{contra} that if $\epsilon$ is small enough and $\Sigma$ is $1+\epsilon$-quasifuchsian, then $\Sigma$ satisfies the well-distribution property unless it is totally geodesic.  We also know by the Mozes-Shah theorem that at most finitely many closed totally geodesic surfaces fail to satisfy the well-distribution property.

There is a slightly stronger version of the well-distribution property called the \textit{strong well-distribution property} \cite{lowe2023rigidity}[Definition 5.1]  Using the fact that the $\Sigma$ are asymptotically dense, it is not hard to check that $\Sigma$ satisfies the strong well-distribution property provided $\Sigma$ is $1+\epsilon$-quasifuchsian and not one of finitely many totally geodesic surfaces.

Assume that $h(\Sigma)$ is homotopic to a hyperbolic totally geodesic surface $\Sigma'$ in $N$.  The arguments in \cite{lowe2023rigidity}[Section 6.2] show that $\Sigma'$ satisfies the well-distribution property if $\Sigma$ satisfies the strong well-distribution property.  From there \cite{lowe2023rigidity}[Theorem 1.1] implies that $N$ has constant sectional curvature $-1$, and so by Mostow rigidity must be isometric to $M$.

\subsection{}

The proof of Theorem \ref{asymprigidity} requires a different argument. We restate it here for convenience. 

\begin{thm} 
Let $(M,g)$ be a negatively curved 3-manifold and $(\Sigma_n)\se (M,g)$ be a sequence of asymptotically geodesic surfaces.

Suppose there is a diffeomorphism $\phi:M\to M$ so that $S_n:= \phi(\Sigma_n)$ form a sequence of asymptotically geodesic surfaces for the hyperbolic metric $g_{hyp}$ of $M$.  Then, $g$ is isometric to $g_{hyp}$.
\end{thm}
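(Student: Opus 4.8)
The plan is to transfer the asymptotic density of the $S_n$ for $g_{hyp}$ to the $\Sigma_n$ for $g$, to extract from the $\Sigma_n$ a dense set of totally geodesic surface germs in $(M,g)$, and then to apply a classical theorem of Cartan. First I would observe that $(S_n)$ is asymptotically dense in $\Gr M$ for $g_{hyp}$: asymptotically geodesic surfaces are asymptotically Fuchsian \cite{epstein1986hyperbolic}, and the proof of Theorem~\ref{contra} uses a surface's quasifuchsian hypothesis only through the resulting uniform principal-curvature bound and through Lemma~\ref{isolate}, so it applies directly to the $S_n$ (this is Corollary~\ref{dense} in the minimal case). Since $\Gr M=\Gr_2(TM)$ does not depend on the metric and $M$ is compact, asymptotic density of a sequence of subsets of $\Gr_2(TM)$ is a purely topological notion; as $\phi$ induces a homeomorphism of $\Gr_2(TM)$ carrying the tangent planes of $\Sigma_n$ to those of $S_n=\phi(\Sigma_n)$, the sequence $(\Sigma_n)$ is asymptotically dense in $\Gr M$ with respect to $g$ as well.

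Next, fix $\rho_0>0$ below the injectivity radius of $(M,g)$ and let $\mathcal T\subseteq\Gr M$ be the set of $(x,V)$ for which the disc $\exp^g_x(V\cap B_{\rho_0}(0))$ is totally geodesic. I claim $\mathcal T=\Gr M$. It is closed, because $(x,V)\mapsto \exp^g_x(V\cap B_{\rho_0}(0))$ is smooth into the space of $C^2$ discs in $M$ and vanishing of the second fundamental form is a closed condition. For density, fix a nonempty open $U\subseteq\Gr M$ and a nonempty open $U'$ with $\overline{U'}\subseteq U$; by the asymptotic density of $(\Sigma_n)$, for all large $n$ there is a point $x_n\in\Sigma_n$ with $(x_n,T_{x_n}\Sigma_n)\in U'$, and after passing to a subsequence $(x_n,T_{x_n}\Sigma_n)\to(x,V)$ with $(x,V)\in\overline{U'}\subseteq U$.

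The main work is a compactness step. I would show that, near $x_n$, each $\Sigma_n$ is a graph over $\exp^g_{x_n}(T_{x_n}\Sigma_n\cap B_\rho(0))$ for a uniform $\rho>0$, with second fundamental form uniformly bounded and tending to zero, and that a subsequence of these graphs converges in $C^2$ to a disc through $x$ tangent to $V$ with vanishing second fundamental form; this is the kind of convergence argument for surfaces with small principal curvatures carried out in \cite{calegari2022counting} and \cite{j}[Section 2.3]. By local uniqueness of totally geodesic submanifolds (shrinking $\rho_0$ below $\rho$ once and for all), this limit is exactly $\exp^g_x(V\cap B_{\rho_0}(0))$, so $(x,V)\in\mathcal T$ and hence $\mathcal T\cap U\neq\emptyset$. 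Thus $\mathcal T$ is dense, and being closed, $\mathcal T=\Gr M$.

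Finally, $(M,g)$ is then a Riemannian $3$-manifold in which every tangent $2$-plane is tangent to a totally geodesic surface, so by a classical theorem of Cartan it has constant sectional curvature; being negatively curved, $g$ is hyperbolic, and since $M$ already carries the hyperbolic metric $g_{hyp}$, Mostow rigidity implies $g$ is isometric to $g_{hyp}$ (after rescaling to curvature $-1$ if one does not normalize a priori). I expect the compactness step to be the only substantive obstacle; the other inputs---Corollary~\ref{dense}, Cartan's theorem, and Mostow rigidity---are either quoted or elementary.
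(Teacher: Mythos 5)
Your proposal is essentially correct, but it takes a genuinely different route from the paper in its second half, and the comparison is worth spelling out. The paper never transfers density through $\phi$: it replaces the $\phi(\Sigma_n)$ by their unique area-minimizing representatives $\Sigma_n'$ in $(M,g_{hyp})$ (via \cite{epstein1986hyperbolic} and \cite{Se}), applies Corollary \ref{dense} to those minimal surfaces, extracts from lifts of the $\Sigma_n$ a \emph{single} totally geodesic surface $\Sigma$ in $(M,g)$ by the same compactness argument you invoke (citing \cite{calegari2022counting}), and then shows this one surface is dense in $\Gr_2(M,g)$ dynamically: Gromov's orbit equivalence of geodesic flows \cite{G} carries a vector tangent to the limiting geodesic surface in $(M,g_{hyp})$ with dense geodesic orbit to a vector tangent to $\Sigma$, and Brin's ergodicity of the frame flow \cite{brin1980ergodicity} upgrades density in $\UT(M,g)$ to density in $\Gr_2(M,g)$ before applying Cartan. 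Your shortcut --- that asymptotic density in $\Gr_2(TM)$ is a purely topological condition and is carried by the bundle map induced by $d\phi$ from the tangent-plane lifts of $S_n$ to those of $\Sigma_n$ --- is legitimate precisely because the hypothesis provides a diffeomorphism, and it lets you run the local compactness step at every 2-plane, producing totally geodesic germs everywhere and bypassing the orbit-equivalence and frame-flow machinery entirely (Ratner--Shah/Mozes--Shah are of course still needed inside Corollary \ref{dense}). What the paper's longer route buys is robustness: it uses only the homotopy classes of the surfaces, so it would survive weakening $\phi$ to a homotopy equivalence, and it is the template for the totally umbilic case (Theorem \ref{intro:totallyumbilic}), where no diffeomorphism-level transfer of tangent planes is available. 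Two caveats, both shared with the paper rather than specific to you: first, applying the proof of Theorem \ref{contra} to the non-minimal $S_n$ requires, through Lemma \ref{isolate}, that all but finitely many $S_n$ not be homotopic into the finite exceptional family of totally geodesic surfaces --- a sequence of surfaces at constant distance $r_n\to 0$ from one fixed closed totally geodesic surface is distinct, asymptotically geodesic, and \emph{not} asymptotically dense --- so you should state the same non-degeneracy the paper uses implicitly when it assumes the surfaces are not eventually all covering a fixed surface; second, Cartan only gives constant curvature $-c$ for some $c>0$, so the conclusion is isometry to $g_{hyp}$ up to the rescaling you already flag. With those points made explicit, your argument stands, and the $C^2$-limit step you single out as the main obstacle is exactly the step the paper also handles by the argument of \cite{calegari2022counting}.
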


Let $\Sigma_n$ be the sequence of asymptotically geodesic surfaces in a closed negatively curved manifold $M$, and let $h:M \rightarrow (M,g_{hyp})$ be the homotopy equivalence from the assumptions of the theorem.  

Let $\Sigma_n'$ be the sequence of asymptotically geodesic minimal surfaces in $(M,g_{hyp})$, obtained by taking the area-minimizing representatives of the homotopy classes $h(\Sigma_n')$, which are unique for large enough $n$.   In more detail, \cite{epstein1986hyperbolic}[Equation 5.5] implies that since the $h(\Sigma_n)$ are homotopic to a sequence of asymptotically geodesic surfaces, their quasiconformal constants tend to 1, and then work by Seppi \cite{Se} implies that area-minimizing representatives in their homotopy classes $\Sigma_n'$ are asymptotically geodesic and unique (see also \cite{huang2023uniqueness}.) 

Since the $\Sigma_n'$ are asymptotically Fuchsian and since it is not the case that all but finitely many are covering a fixed surface, we know that their tangent planes become dense in $Gr_2(M,g_{hyp})$ by Corollary \ref{dense}.  Choose tangent planes $\pi_n$ to $\Sigma_n'$ that converge, up to passing to a subsequence, to a tangent plane to $M$ that contains a tangent vector with dense orbit under the geodesic flow.

The homotopy equivalence between $(M,g)$ and $(M,g_{hyp})$ given by the assumptions of the theorem must be homotopic to a diffeomorphism by geometrization and the fact that every self homotopy equivalence of a closed hyperbolic manifold is homotopic to an isometry by Mostow rigidity:  fix such a diffeomorphism $F: (M,g_{hyp})\rightarrow (M,g)$ and choose a lift of $F$ to an equivariant diffeomorphism $\tilde{F}: \mathbf{H}^3 \rightarrow (\tilde{M},\tilde{g})$, where we have identified the universal cover of $(M,g_{hyp})$ with $\mathbf{H}^3$.  Choose lifts $\tilde{\Sigma}_n'$ of $\Sigma_n'$ to $\mathbf{H}^3$ whose intersection with a fixed fundamental domain for the action of $\pi_1(M)$ on $\mathbf{H}^3$ contain tangent planes that projects to $\pi_n$. After passing to a subsequence, we can assume that the $\tilde{\Sigma}_n'$  converge on compact subsets to a totally geodesic surface $\tilde{\Sigma}'$ that projects to a surface $\Sigma'$ in $(M,g_{hyp})$. Note that $\Sigma'$ contains the tangent plane $\pi$, and thus $\Sigma'$ has a tangent vector with dense orbit under the geodesic flow.

A choice of lift $\tilde{F}$ of the diffeomorphism $F$ to a map $(M,g_{hyp}) \rightarrow (M,g)$ between the universal covers and the choices of lifts $\tilde{\Sigma}_n'$ determine lifts $\tilde{\Sigma}_n$ of the $\Sigma_n$ to $(\tilde{M},\tilde{g})$, that are at finite Hausdorff distance from the $\tilde{F}(\tilde{\Sigma}_n')$.  Using the fact that the $\tilde{\Sigma}_n$ are asymptotically geodesic, and so in particular have uniformly bounded second fundamental forms, and arguing like in \cite{calegari2022counting}[Proposition 4.1], we can reason that, possibly after passing to a subsequence, the $\tilde{\Sigma}_n$ converge to a totally geodesic surface that we call $\tilde{\Sigma}$, and whose projection to $(M,g)$ we call $\Sigma$.  We claim that  there is a totally geodesic surface that is dense in the 2-plane Grassmann bundle of $(M,g)$.  It then follows that every tangent 2-plane to $M$ is tangent to a totally geodesic surface, which Cartan showed implies that $M$ must have constant sectional curvature \cite{cartan1928leccons}.  To finish the proof, it is therefore enough to show that there is a totally geodesic surface that is dense in the 2-plane Grassmann bundle of $(M,g)$.

There is a homeomorphism  $\Phi:\UT(M,g_{hyp}) \rightarrow \UT(M,g)$ that maps geodesics to geodesics (i.e., an orbit equivalence between the geodesic flow for $g$ and the geodesic flow for $g_{hyp}$) \cite{G}. By how $\Phi$ is constructed, it is homotopic to the lift $\UT(M,g_{hyp}) \rightarrow \UT(M,g)$ of a diffeomorphism $(M,g_{hyp}) \rightarrow (M,g)$.  By how we constructed $\Sigma'$ and $\Sigma$, we know that, possibly after composing $\Phi$ with an element of the finite group of isometries of $(M,g_{hyp})$, that $\Phi$ maps every unit tangent vector tangent to $\Sigma'$ to a unit tangent vector tangent to $\Sigma$.  Because $\Sigma'$ contains a unit tangent vector with dense orbit under the geodesic flow, the same must be true of $\Sigma$.  Choose a frame $\Pi$ in the frame bundle of $(M,g)$ that has dense orbit under the frame flow.  This is possible because the frame flow  of a closed negatively curved 3-manifold is ergodic \cite{brin1980ergodicity}. 

Because $\Sigma$ contains a vector with dense orbit under the geodesic flow, we can then choose a sequence of frames $\Pi_n$ on $\Sigma$ the initial vectors of which converge to the first vector of $\Pi$.  Passing to a convergent subsequence of the $\Pi_n$ so that they converge to some frame $\Pi'$, it will then be the case that the first two unit vectors in the frame $\Pi'$ span a tangent plane that is a limit of tangent planes to $\Sigma$, and thus is itself tangent to a totally geodesic surface.  Since $\Pi$ and $\Pi'$ share the same first vector and their second two vectors differ by a rotation in the plane orthogonal to the first vector, $\Pi'$ also has dense orbit under the frame flow. The tangent plane spanned by the first two vectors of $\Pi'$ is therefore tangent to a totally geodesic surface whose closure is all of $\Gr_2 M$, which finishes the proof.

\section{Rigidity of totally umbilic surfaces} \label{section:umbilic}

We give the proof of Theorem \ref{intro:totallyumbilic} from the introduction, restated here for convenience. 

\begin{thm} 
Suppose that $(M,g)$ is a closed negatively curved 3-manifold, and that it contains infinitely many distinct totally umbilic surfaces $\Sigma_n$ as in Question \ref{intro:questiontotallyumbilic}.  Suppose there is a homotopy equivalence $h:M\to M$ so that the $S_n:= h(\Sigma_n)$ are homotopic to a sequence of asymptotically totally umbilic surfaces for a hyperbolic metric $g_{hyp}$ on $M$.  Then $(M,g)$ must have constant curvature.  

\end{thm}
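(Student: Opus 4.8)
The plan is to mimic the proof of Theorem~\ref{asymprigidity}, systematically replacing the geodesic flow and totally geodesic surfaces by Labourie's theory of $k$-surfaces and totally umbilic surfaces of constant principal curvature. A totally umbilic surface with constant principal curvature $\mu$ has constant extrinsic curvature $k=\mu^2$, so each $\Sigma_n\se(M,g)$ is a $k_n$-surface with $k_n=\mu_n^2$; passing to a subsequence we may assume $\mu_n\to\mu$, and since the mean curvatures are $<c$ and the sectional curvature of $g$ is $\le-c^2$ we have $0\le k:=\mu^2<c^2$, which is Labourie's admissible range for $(M,g)$ (existence of complete $k$-surfaces, compactness of the space of pointed $k$-surfaces, smooth subconvergence of sequences with uniformly bounded second fundamental form, solution of the asymptotic Plateau problem). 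If $\mu=0$ the limits produced below are totally geodesic and the argument collapses to that of Theorem~\ref{asymprigidity}, so we assume $\mu>0$. On the hyperbolic side, the hypothesis provides asymptotically totally umbilic surfaces $S_n'$ homotopic to $h(\Sigma_n)$, whose principal curvatures tend to some $\lambda'\in[0,1)$; we may assume $\lambda'>0$, the case $\lambda'=0$ being covered directly by Corollary~\ref{dense}. In $\bH^3$ a complete totally umbilic surface of principal curvature $\lambda'$ is exactly the surface equidistant, at distance $d$ with $\tanh d=\lambda'$, from a totally geodesic plane.

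\textbf{Density on the hyperbolic side.} First I would show the $S_n'$ are asymptotically dense in $\Gr_2(M,g_{hyp})$. The key point is that the \emph{normal geodesic flow} $\psi_d$ on $\Gr_2(M,g_{hyp})$ --- flow a pointed $2$-plane a distance $d$ along the orthogonal geodesic, parallel-transporting the plane --- is a diffeomorphism sending the lift of a totally geodesic plane $P$ to the lift of its equidistant surface at distance $d$. Hence a complete totally umbilic surface of curvature $\lambda'$ in $(M,g_{hyp})$ is closed, resp.\ dense, in $\Gr_2$ precisely when the underlying geodesic plane is, by Ratner--Shah, and all but finitely many are $\eps$-dense by Mozes--Shah. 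The proofs of Theorem~\ref{contra} and Corollary~\ref{dense} then carry over verbatim with each geodesic disc $\Delta_R(p)$ replaced by $\psi_d(\Delta_R(p))$, giving that the distinct $S_n'$ are asymptotically dense in $\Gr_2(M,g_{hyp})$. I would then pick tangent planes $\pi_n$ to $S_n'$ converging to a plane whose underlying geodesic plane projects to a \emph{dense} geodesic plane, and, through suitable lifts, pass to lifts $\tilde S_n'$ converging on compacts of $\bH^3$ to the equidistant surface $\tilde S_\infty'$ of a plane $\tilde P_\infty$ with $\partial_\infty\tilde P_\infty$ a round circle $C$ whose $\pi_1M$-orbit is dense among round circles.

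\textbf{Transfer to $(M,g)$.} As in Theorem~\ref{asymprigidity}, by geometrization and Mostow rigidity $h$ is homotopic to a diffeomorphism $F:(M,g_{hyp})\to(M,g)$; fix a lift $\tilde F:\bH^3\to(\tilde M,\tilde g)$ and let $\tilde\Sigma_n$ be the resulting lifts of $\Sigma_n$, lying at bounded Hausdorff distance from $\tilde F(\tilde S_n')$. By compactness of pointed $k$-surfaces and the uniform bound on $\mu_n$, a further subsequence $\tilde\Sigma_n$ converges on compacts to a complete $k$-surface $\tilde\Sigma_\infty$ --- a totally umbilic surface of principal curvature $\mu$ --- projecting to $\Sigma_\infty\se(M,g)$. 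Now $\tilde F$ induces the boundary homeomorphism $\phi:S^2\to S^2$ conjugating the $\pi_1M$-action on $\partial_\infty\bH^3$ to that on $\partial_\infty(\tilde M,\tilde g)$ (the same Morse-lemma input as in Theorem~\ref{asymprigidity}), and by Labourie's solution of the asymptotic Plateau problem a complete $k$-surface is determined by its Jordan curve at infinity; boundedness of the Hausdorff distances then gives $\partial_\infty\tilde\Sigma_\infty=\phi(C)$. Since $\phi$ conjugates the two actions and the $\pi_1M$-orbit of $C$ is dense among round circles, the orbit of $\phi(C)$ is dense among the curves $\{\phi(\text{round circle})\}$; feeding each such curve through the continuous $\pi_1M$-equivariant asymptotic-Plateau correspondence for $k$-surfaces in $(\tilde M,\tilde g)$ and projecting to $\Gr_2(M,g)$ --- a $3$-parameter family of convex surfaces, which a dimension count shows sweeps out $\Gr_2$ --- shows that $\Sigma_\infty$ is dense in $\Gr_2(M,g)$.

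\textbf{Endgame and the main obstacle.} Since limits of $k$-surfaces are $k$-surfaces, density of the tangent planes of $\Sigma_\infty$ gives a totally umbilic surface of principal curvature $\mu$ through \emph{every} $2$-plane of $(M,g)$. The second fundamental form of such a surface is parallel, so the Codazzi equation forces the ambient curvature operator, restricted to each tangent plane, to be that of a space form; having such a surface in every direction then forces, by a Cartan-type rigidity argument in the spirit of \cite{cartan1928leccons}, that $(M,g)$ has constant sectional curvature, which is the assertion. The real work --- and the step I expect to be the main obstacle --- is the $k$-surface input of the third paragraph: that Labourie's asymptotic-Plateau correspondence is a $\pi_1M$-equivariant homeomorphism compatible with the boundary conjugacy $\phi$, and that the space of pointed $k$-surfaces is compact enough to produce $\tilde\Sigma_\infty$ and pin down $\partial_\infty\tilde\Sigma_\infty=\phi(C)$; one must also take care to match the curvature parameters on the two sides and to supply the Cartan-type statement for totally umbilic rather than totally geodesic surfaces. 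This is where the Labourie--Smith machinery, in the form used for the $k$-surface rigidity results of \cite{alvarez2022foliated},\cite{alvarez2024rigidity}, does the heavy lifting.
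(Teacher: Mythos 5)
Your overall strategy coincides with the paper's: obtain density on the hyperbolic side from Corollary \ref{dense} (the paper does this by noting that totally umbilic surfaces in $\bH^3$ with principal curvature less than $1$ are equidistant surfaces of totally geodesic planes, so the asymptotically umbilic surfaces are homotopic to asymptotically geodesic ones; your normal-flow variant of Theorem \ref{contra} accomplishes the same thing), transfer the information to $(M,g)$ through the Labourie--Smith theory of $k$-surfaces, and conclude with the totally umbilic analogue of Cartan's theorem, which the paper cites from \cite{leung1971axiom} rather than re-deriving via Codazzi -- that last difference is cosmetic.

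The genuine gap is in your density step inside $(M,g)$. Knowing that the $\pi_1M$-orbit of $\phi(C)$ is dense among the curves $\phi(\text{round circle})$, together with continuity and equivariance of the asymptotic Plateau map, only tells you that translates of $\tilde\Sigma_\infty$ accumulate onto every $k$-surface spanning such a curve; to get density in $\Gr_2(M,g)$ you must know that this $3$-parameter family of $k$-surfaces sweeps out all of $\Gr_2$ (really of $\UT(M,g)$, once coorientations are tracked). A dimension count does not give this: the parameter space has the right dimension, but the image of a merely continuous family could be a proper closed subset. That surjectivity is precisely the foliated Plateau theorem of Alvarez--Lowe--Smith (\cite{alvarez2022foliated}, recorded in the paper as Theorem \ref{thm:ksurfaceconj}): the unit tangent bundle of $(M,g)$ is \emph{foliated} by $k$-surfaces, with a leaf-preserving homeomorphism to the umbilic foliation of $\UT(M,g_{hyp})$ compatible with the Labourie--Smith correspondence. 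Even granting it, one must still identify your limit $\Sigma_\infty$ with the leaf spanning $\phi(C)$, which uses uniqueness in the asymptotic Plateau problem (\cite{labourie1999lemme}, \cite{smith2021asymptotic}) and the compatibility statements the paper isolates as Propositions \ref{prop-phi1} and \ref{prop-phi2}; the paper additionally handles the varying mean curvatures $H_n$ by comparing $\Sigma_n$ with the $H^2$-surface in its homotopy class via Plateau uniqueness (your arrangement, taking the limit of the $\Sigma_n$ directly, sidesteps part of this, but only after the foliated theorem is in place). So the skeleton is right, but the step you defer to ``a dimension count'' plus an equivariant Plateau correspondence is exactly the heavy theorem the proof needs, and as written the argument does not close without it.
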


Let $M$ be a closed negatively curved 3-manifold with sectional curvature at most $-c$, and let $\Sigma_n$ be totally umbilic surfaces in $M$ each with mean curvature constant and equal to $H_n<c$. Let $h$ be a homotopy equivalence as in the statement of Theorem \ref{intro:totallyumbilic} between $M$ and a closed hyperbolic 3-manifold $(M,g_{hyp})$, so that the $h(\Sigma_n)$ are homotopic to a sequence of asymptotically totally umbilic surfaces $\hat{\Sigma}_n$.  Because all totally umbilic surfaces in $\mathbf{H}^3$ with principal curvatures less than 1 are constant distance surfaces to totally geodesic planes in $\mathbf{H}^3$, a simple contradiction compactness argument shows that the $\hat{\Sigma}_n$ are homotopic to a sequence $\Sigma_n'$ of asymptotically geodesic surfaces in $(M,g_{hyp})$.  The tangent planes to the $\Sigma_n'$ must be dense in $\Gr_2(M)$ by Corollary \ref{dense}, and consequently the same is true for the $\hat{\Sigma}_n$.  

\subsection{Background on k-surfaces}
A \textit{k-surface} is a surface for which the product of the principal curvatures, or extrinsic curvature, is constant and equal to $k$. They will be useful for us because totally umbilic surfaces are special cases of k-surfaces.  Labourie initiated the study of the dynamical properties of the space of k-surfaces, presenting them as a higher-dimensional analogue of the geodesic flow when the ambient manifold is negatively curved \cite{labourie1999lemme}, \cite{labourie2005random} (see \cite{labourie2002phase} for a survey.)  We describe in this section results that will play the role of the geodesic orbit equivalence result used in Section \ref{section:totallygeodesicvariablecurvature}.   

If $N$ is a negatively curved 3-manifold with sectional curvature at most $-k$, then the space of pointed k-surfaces in $N$ (the space of k-surfaces together with a choice of basepoint) suitably topologized has the structure of a \textit{lamination} $\mathcal{L}_{N}$, whose leaves are obtained by fixing a $k$-surface, and allowing the basepoint to vary \cite{labourie1999lemme}.  (While the k-surfaces corresponding to distinct leaves of this lamination may intersect inside $N$, they are disjoint in the phase space of pointed k-surfaces in $N$.)  Labourie in the local case and Smith in general proved that this lamination is independent of the metric on $N$ in the following sense: For any two $N_1$ and $N_2$ as above there is a leaf-preserving homeomorphism $\Phi_{\mathcal{L}}$ between $\mathcal{L}_{N_1}$ and $\mathcal{L}_{N_2}$ (see \cite{smith2021asymptotic} for precise statements.)  We will need the following two facts about the leaf-preserving homeomorphism $\Phi_{\mathcal{L}}$, which readily follow from the construction and basic properties of the $\Phi_{\mathcal{L}}$, and which we write as propositions.  
 
\begin{prop} \label{prop-phi1}
 Suppose that $f$ is a homotopy equivalence between $N_1$ and $N_2$.  Then we can choose $\Phi_{\mathcal{L}}$ so that for every $\pi_1$-injective $k$-surface $\Sigma$ in $N_1$, the k-surface corresponding to $\Sigma$ under $\Phi_{\mathcal{L}}$ is homotopic to $f(\Sigma).$
 \end{prop}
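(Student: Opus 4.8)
The plan is to identify Smith's leaf-preserving homeomorphism $\Phi_{\mathcal{L}}$ with the map on laminations induced by the action of $f$ on ideal boundaries, and then to read off the homotopy statement from the asphericity of $N_2$.

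Recall how $\mathcal{L}_N$ and $\Phi_{\mathcal{L}}$ are built. Passing to the universal cover $\widetilde{N}$, a Hadamard 3-manifold of pinched negative curvature with ideal boundary $\partial_\infty\widetilde{N}\cong S^2$, a leaf of $\mathcal{L}_N$ is (the image in $N$ of) a complete immersed $k$-surface in $\widetilde{N}$. By the asymptotic Plateau and Pogorelov-type theorems of Labourie and Smith \cite{labourie1999lemme},\cite{smith2021asymptotic}, in the relevant range of $k$ such a $k$-surface is determined by, and conversely realizes, admissible asymptotic data on $\partial_\infty\widetilde{N}$; for a $\pi_1$-injective $k$-surface the relevant data is its $\Gamma$-invariant limit set, where $\Gamma$ is the image of its fundamental group. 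The homeomorphism $\Phi_{\mathcal{L}}:\mathcal{L}_{N_1}\to\mathcal{L}_{N_2}$ is produced by transporting this asymptotic data along a homeomorphism $\partial_\infty\widetilde{N}_1\to\partial_\infty\widetilde{N}_2$.

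First I would pin down this boundary identification using $f$. Since $N_1$ and $N_2$ are closed and negatively curved, their fundamental groups are Gromov hyperbolic; a lift $\widetilde{f}:\widetilde{N}_1\to\widetilde{N}_2$ of $f$ (which may be taken smooth, hence locally Lipschitz) is $f_*$-equivariant, and it is a quasi-isometry because $f$ is a homotopy equivalence of compact manifolds. Hence $\widetilde{f}$ extends to an $f_*$-equivariant homeomorphism $\partial_\infty\widetilde{f}:\partial_\infty\widetilde{N}_1\to\partial_\infty\widetilde{N}_2$. The step that needs care, and which I expect to be the main obstacle, is to verify that $\Phi_{\mathcal{L}}$ may be taken to be exactly the transport of asymptotic data along $\partial_\infty\widetilde{f}$: this requires extracting from the constructions in \cite{labourie1999lemme},\cite{smith2021asymptotic} the naturality of $\Phi_{\mathcal{L}}$ with respect to the chosen boundary map, together with continuity of the resulting leaf bijection in the lamination topology.

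Granting this, let $\Sigma\subset N_1$ be a $\pi_1$-injective $k$-surface, let $\Gamma\le\pi_1 N_1$ be the image of $\pi_1\Sigma$ up to conjugacy, and let $\widetilde{\Sigma}\subset\widetilde{N}_1$ be a $\Gamma$-invariant lift with limit set $\Lambda\subset\partial_\infty\widetilde{N}_1$. By equivariance, $\partial_\infty\widetilde{f}(\Lambda)$ is the limit set of $f_*(\Gamma)$, so the $k$-surface $\widetilde{\Sigma}'\subset\widetilde{N}_2$ it determines is $f_*(\Gamma)$-invariant and descends to a $k$-surface $\Sigma'$ in $N_2$ whose inclusion induces, up to conjugacy, the homomorphism $\pi_1\Sigma\xrightarrow{f_*}\pi_1 N_2$; by construction $\Sigma'$ represents $\Phi_{\mathcal{L}}$ applied to the leaf of $\Sigma$. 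Finally, since $N_2$ is aspherical, a $\pi_1$-injective map of a closed surface into $N_2$ is determined up to homotopy by the conjugacy class of the image of its fundamental group; as $\Sigma'$ and $f(\Sigma)$ induce the same such class, they are homotopic, which is the assertion of the proposition.
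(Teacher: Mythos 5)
Your proposal is correct and follows essentially the route the paper intends: the paper offers no detailed argument, asserting that the proposition ``readily follows from the construction and basic properties of $\Phi_{\mathcal{L}}$,'' and your write-up simply unwinds that construction --- choose the $\Phi_{\mathcal{L}}$ obtained by transporting asymptotic data along the boundary homeomorphism $\partial_\infty\widetilde{f}$ induced by an $f_*$-equivariant quasi-isometric lift of $f$, use equivariance to see the corresponding $k$-surface realizes the conjugacy class of $(f|_\Sigma)_*$, and conclude by asphericity of $N_2$. One small wording fix: the asphericity criterion is that the map is determined up to homotopy by the induced homomorphism $\pi_1\Sigma\to\pi_1 N_2$ up to conjugacy (not merely by the conjugacy class of the image subgroup), which is in fact what your argument verifies, so the proof stands as written.
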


\begin{prop} \label{prop-phi2}
Suppose that $\Sigma_n$ is a sequence of k-surfaces in $N_1$, and that there is a k-surface $\Sigma$ in $N_1$ so that the following holds: there are lifts $\tilde{\Sigma}_n$ and $\tilde{\Sigma}$ to the universal cover of $N_1$ so that $\tilde{\Sigma}_n$ converges smoothly to $\tilde{\Sigma}$ on compact sets.  Then the same statement holds for the k-surfaces $\Sigma_n'$ and $\Sigma'$ in $N_2$ that correspond respectively to the $\Sigma_n$ and $\Sigma$ under $\Phi_{\mathcal{L}}$.  
\end{prop}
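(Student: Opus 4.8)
\textbf{Proof plan for Proposition \ref{prop-phi2}.} The plan is to observe that both the hypothesis and the conclusion are nothing more than convergence statements in the lamination topology, and then to invoke the fact that $\Phi_{\mathcal{L}}$ is a homeomorphism for that topology. First I would recall, from the construction of $\mathcal{L}_N$ in \cite{labourie1999lemme} and \cite{smith2021asymptotic}, that a point of $\mathcal{L}_N$ is a k-surface of $N$ together with a basepoint (and the jet data at that point needed to reconstruct the immersion nearby), and that the topology on $\mathcal{L}_N$ is the pointed $C^\infty$ (Cheeger--Gromov) topology on such pointed immersed k-surfaces: one has $(\Sigma_n,x_n)\to(\Sigma,x)$ in $\mathcal{L}_N$ exactly when, after fixing a lift $\tilde x$ of $x$ through the universal covering $\tilde N\to N$ and lifting each immersion through the point $\tilde x_n$ of its lift nearest $\tilde x$, the pointed immersions $(\tilde\Sigma_n,\tilde x_n)$ converge in $C^\infty_{\mathrm{loc}}$ to $(\tilde\Sigma,\tilde x)$. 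Since the covering projection is a local isometry, this is precisely ``smooth convergence on compact sets of lifts'' in the sense of the hypothesis, so the hypothesis and conclusion are indeed statements about $\mathcal{L}_{N_1}$ and $\mathcal{L}_{N_2}$ respectively.

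With the topology so described, the rest is a short unwinding. Given the lifts $\tilde\Sigma_n\to\tilde\Sigma$ in $\tilde N_1$ from the hypothesis, I would fix $\tilde x\in\tilde\Sigma$ and let $\tilde x_n\in\tilde\Sigma_n$ be the nearest point to $\tilde x$ (well defined for large $n$ by the convergence, and satisfying $\tilde x_n\to\tilde x$); projecting to $N_1$ this produces points $p_n=(\Sigma_n,x_n)$ and $p=(\Sigma,x)$ of $\mathcal{L}_{N_1}$ with $p_n\to p$. Applying the homeomorphism $\Phi_{\mathcal{L}}$ gives $\Phi_{\mathcal{L}}(p_n)\to\Phi_{\mathcal{L}}(p)$ in $\mathcal{L}_{N_2}$; since $\Phi_{\mathcal{L}}$ is leaf-preserving, $\Phi_{\mathcal{L}}(p_n)$ lies on the leaf of the k-surface $\Sigma_n'$ of $N_2$ corresponding to $\Sigma_n$ and $\Phi_{\mathcal{L}}(p)$ on the leaf of $\Sigma'$, so these limits are represented by pointed k-surfaces $(\Sigma_n',x_n')$ and $(\Sigma',x')$. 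Reading the definition of convergence in $\mathcal{L}_{N_2}$ backwards --- fix a lift $\tilde x'$ of $x'$, take the corresponding lifts $\tilde x_n'$ of $x_n'$, and let $\tilde\Sigma_n',\tilde\Sigma'$ be the induced lifts in $\tilde N_2$ --- yields $\tilde\Sigma_n'\to\tilde\Sigma'$ smoothly on compact sets, which is exactly the asserted conclusion.

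The one step that is not bookkeeping, and which I expect to be the main point to pin down, is the identification in the first paragraph: that the lamination topology of \cite{labourie1999lemme}, \cite{smith2021asymptotic} genuinely is the $C^\infty_{\mathrm{loc}}$ topology on pointed (lifted) k-surfaces, and that $\Phi_{\mathcal{L}}$ is a homeomorphism for \emph{this} topology rather than for some a priori weaker one. This is where the compactness and interior regularity theory for immersions of prescribed extrinsic curvature (Labourie's Morse-type lemma for convex surfaces and Smith's compactness theorems) is used, to upgrade weak subsequential limits of k-surfaces to $C^\infty_{\mathrm{loc}}$ limits, and where one uses that $\Phi_{\mathcal{L}}$ is assembled from solutions of a uniformly elliptic equation depending continuously on $C^\infty_{\mathrm{loc}}$ data, so that it and its inverse are continuous at this level of regularity. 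Once that is granted the rest is the formal argument above; in particular no equivariance of $\Phi_{\mathcal{L}}$ is needed, since lifts to universal covers enter only in translating convergence in $\mathcal{L}_{N_i}$ into convergence of lifted immersions, and that translation takes place internally to each of $N_1$ and $N_2$.
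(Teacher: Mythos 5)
Your proposal is correct and is essentially the paper's own justification: the paper offers no written proof of Proposition \ref{prop-phi2}, asserting only that it "readily follows from the construction and basic properties of $\Phi_{\mathcal{L}}$," and what you spell out — that smooth convergence of lifts is exactly convergence in the lamination ($C^\infty_{\mathrm{loc}}$ pointed) topology and that $\Phi_{\mathcal{L}}$ is a leaf-preserving homeomorphism for that topology — is precisely that unwinding. Your closing caveat, that the real content lies in the Labourie--Smith compactness/continuity theory guaranteeing $\Phi_{\mathcal{L}}$ is a homeomorphism at this level of regularity, correctly locates the only nontrivial input, which the paper likewise delegates to \cite{labourie1999lemme} and \cite{smith2021asymptotic}.
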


We will also use recent work by Alvarez-Lowe-Smith \cite{alvarez2022foliated}, that refined the existence of the leaf-preserving homeomorphisms $\Phi_{\mathcal{L}}$ as follows.  Let $(M,g_{hyp})$ be a closed hyperbolic 3-manifold.  Then for each $0<k<1$, the unit tangent bundle to $M$ has a natural foliation $\mathcal{F}_M(k)$ by totally umbilic surfaces with constant extrinsic curvature $k$, lifted by their unit normal vector fields (note that any totally umbilic surface has a canonical unit normal vector field, just by taking the unit vector field in the direction of the mean curvature vector.) This is because $\mathbf{H}^3$ has such a foliation that is invariant by isometries.  The following theorem is a consequence of \cite{alvarez2022foliated}[Theorem 2.1.3]. 

\begin{thm} \label{thm:ksurfaceconj}
For every negatively curved Riemannian 3-manifold $N$ with sectional curvature at most $-k$ the following holds. The unit tangent bundle $\UT(N)$ admits a foliation $\mathcal{F}_N(k)$ by k-surfaces lifted by their unit normal vectors.  The foliation  $\mathcal{F}_N(k)$ has the following property. Let $M$ be a hyperbolic manifold diffeomorphic to $N$. Then there is a homeomorphism $\Phi:\UT(M) \rightarrow \UT(N)$ sending leaves of $\mathcal{F}_M(k)$ to leaves of $\mathcal{F}_N(k)$.  Moreover, $\Phi$ can be chosen so that, for each leaf $F$ of $\mathcal{F}_N(k)$, $\Phi(F)$ and $\Phi_{\mathcal{L}}(F)$ are the same k-surface.    
\end{thm}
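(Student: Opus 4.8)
The plan is to obtain this statement by repackaging the foliated structural stability theorem of Alvarez--Lowe--Smith, \cite{alvarez2022foliated}[Theorem 2.1.3], after checking that the conjugating homeomorphism it provides can be arranged to agree leafwise with the Labourie--Smith homeomorphism $\Phi_{\mathcal{L}}$. The starting point is the hyperbolic model: for $0<k<1$, the totally umbilic surfaces in $\mathbf{H}^3$ of constant extrinsic curvature $k$ are exactly the surfaces equidistant from a totally geodesic plane at distance $\arctanh\sqrt{k}$, and lifting each of them by the unit normal pointing along its mean curvature vector yields a foliation of $\UT(\mathbf{H}^3)$ that is $\Isom(\mathbf{H}^3)$-invariant because the family of totally geodesic planes is. It therefore descends to the foliation $\mathcal{F}_M(k)$ of $\UT(M)$. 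Via the tautological map sending a pointed $k$-surface to its basepoint together with its unit normal there, $\mathcal{F}_M(k)$ is identified with the sublamination $\mathcal{L}_M^k\subseteq\mathcal{L}_M$ of leaves whose underlying $k$-surface is one of these equidistant surfaces; note $\mathcal{L}_M^k$ is a proper sublamination, since there are complete $k$-surfaces in $\mathbf{H}^3$ that are not totally umbilic.

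Next I would invoke \cite{alvarez2022foliated}[Theorem 2.1.3], whose content in the present language is that for $N$ negatively curved with sectional curvature $\le -k$ the unit-normal lifts of complete $k$-surfaces do not merely laminate but \emph{foliate} $\UT(N)$ -- this is the foliation $\mathcal{F}_N(k)$ -- and moreover the tautological ``basepoint and normal'' map restricts to a homeomorphism from the sublamination $\Phi_{\mathcal{L}}(\mathcal{L}_M^k)\subseteq\mathcal{L}_N$ onto $\UT(N)$. Composing the identification $\mathcal{F}_M(k)\cong\mathcal{L}_M^k$, the restriction of $\Phi_{\mathcal{L}}$ to $\mathcal{L}_M^k$, and the identification $\Phi_{\mathcal{L}}(\mathcal{L}_M^k)\cong\mathcal{F}_N(k)$, one obtains a map $\Phi:\UT(M)\to\UT(N)$. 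By construction it is a homeomorphism (a composite of homeomorphisms of spaces exhausting $\UT(M)$ and $\UT(N)$), it sends leaves of $\mathcal{F}_M(k)$ to leaves of $\mathcal{F}_N(k)$, and for each leaf $F$ of $\mathcal{F}_M(k)$ the $k$-surface underlying $\Phi(F)$ is by definition the $k$-surface $\Phi_{\mathcal{L}}(F)$. Because $M$ and $N$ are diffeomorphic, Proposition~\ref{prop-phi1} allows us to choose $\Phi_{\mathcal{L}}$ so that $\Phi_{\mathcal{L}}(F)$ lies in the homotopy class of the image of $F$ under the reference diffeomorphism, and Proposition~\ref{prop-phi2} is what ensures the middle identification is continuous, so that $\Phi$ is a homeomorphism of foliated spaces rather than merely a leafwise bijection.

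The step I expect to be the crux -- and which is genuinely the content of \cite{alvarez2022foliated}[Theorem 2.1.3], building on Labourie \cite{labourie1999lemme} and Smith \cite{smith2021asymptotic} -- is the passage from ``lamination'' to ``foliation'': the assertion that in curvature $\le -k$ the complete $k$-surfaces of extrinsic curvature $k$, lifted by their unit normals, genuinely fill $\UT(N)$ injectively and depend continuously on the normal datum. This is exactly what simultaneously exhibits $\mathcal{F}_N(k)$ as a foliation and forces the two a priori distinct homeomorphisms $\Phi$ and $\Phi_{\mathcal{L}}$ to carry the same leaves; once it is in hand, everything else reduces to the elementary hyperbolic computation of the first paragraph and formal manipulation of the foliated homeomorphisms.
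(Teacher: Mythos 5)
Your proposal is correct and follows essentially the same route as the paper, which likewise obtains the statement directly from \cite{alvarez2022foliated}[Theorem 2.1.3] together with the observation that the equidistant (totally umbilic) $k$-surfaces give an isometry-invariant foliation of $\UT(\mathbf{H}^3)$ descending to $\mathcal{F}_M(k)$. Your extra bookkeeping identifying $\mathcal{F}_M(k)$ with a sublamination of $\mathcal{L}_M$ and defining $\Phi$ leafwise through $\Phi_{\mathcal{L}}$ is just a more explicit account of the compatibility clause that the paper also attributes to that citation.
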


\subsection{}

As in the Section \ref{section:totallygeodesicvariablecurvature}, after passing to a subsequence we can find a dense immersed totally geodesic plane in $(M,g_{hyp})$ that lifts to a totally geodesic plane in $\mathbf{H}^3$ to which lifts of the $\Sigma_n'$ converge.  We can thus also find a totally umbilic plane $\hat{\Sigma}$ in $(M,g_{hyp})$ playing the same role for the $\hat{\Sigma}_n$: i.e., which has a lift to $\mathbf{H}^3$ to which lifts of the  $\hat{\Sigma}_n$ converge (this totally umbilic plane is at constant distance to the totally geodesic plane of the previous sentence.)   

After passing to a subsequence, we can assume that the $H_n$ converge to some real number $H$. The extrinsic curvatures of the totally umbilic surfaces $\Sigma_n$ in $N$ consequently converge to $H^2$.   Let $\Phi:\UT(M) \rightarrow \UT(N)$ be the homeomorphism from Theorem \ref{thm:ksurfaceconj} sending  tangential lifts of $k$-surfaces in $\UT(M)$ to tangential lifts of $k$-surfaces in $\UT(N)$ for $k=H^2$. Then $\Phi(\hat{\Sigma})$ is dense in $\UT(N)$, and projects to a k-surface $\Sigma$ in $N$ for $k=H^2$.  If $\Sigma$ is totally umbilic, then every tangent plane to $N$ is tangent to a totally umbilic surface.  That $M$ has constant curvature then follows from the main theorem of \cite{leung1971axiom}, which is the analogue for totally umbilic surfaces of the theorem by Cartan used in the previous section.  It is therefore enough to prove that $\Sigma$ is totally umbilic, which we will accomplish by showing every tangent plane to $\Sigma$ is a limit of tangent planes to the $\Sigma_n$. 

First assume that the extrinsic curvatures of the $\Sigma_n$ are all equal to $H^2$. Then it follows from Propositions \ref{prop-phi1} and \ref{prop-phi2} that lifts of $\hat{\Sigma}_n$ to the universal cover smoothly converge to a lift of $\hat{\Sigma}$ to the universal cover on compact sets, which establishes the claim.  

In the general case that the $H_n$ are allowed to vary, we let $\Sigma_n(H^2)$ be the unique $k$-surface in the homotopy class of $\Sigma_n$ for $k=H^2$.  Choose lifts $\tilde{\Sigma}_n(H^2)$ and $\tilde{\Sigma}_n$ of $\Sigma_n(H^2)$ and $\Sigma_n$ to $\tilde{N}$, so that $\tilde{\Sigma}_n(H^2)$ and $\tilde{\Sigma}_n$ are at finite Hausdorff distance from each other, and intersect a fixed compact fundamental domain independent of $n$.  Then if the Hausdorff distance between the tangential lifts of $\tilde{\Sigma}_n(H^2)$ and $\tilde{\Sigma}_n$ tends to zero as $n$ tends to infinity, we will have reduced establishing the claim to the case that all $\Sigma_n$ were $H^2$ surfaces and be able to conclude by the same argument as in the previous paragraph.  

Therefore suppose that this fails, and that for some $\epsilon>0$ it is the case that some fixed compact set $K$ of the unit tangent bundle contains unit tangent vectors normal to $\tilde{\Sigma}_n(H^2) \cap K $ at a distance of at least $\epsilon>0$ from  $\tilde{\Sigma}_n \cap K$.  Then after passing to subsequences the  $\tilde{\Sigma}_n(H^2)$ and $\tilde{\Sigma}_n$ converge on compact sets to k-surfaces $S_1$ and $S_2$ for $k=H^2$ by \cite{smith2024quaternions}[Theorem 1.1.1, Remark 1.1.2].  But $S_1$ and $S_2$ have the same asymptotic boundary, and so $S_1=S_2$ by the main results of \cite{labourie1999lemme} (see also \cite{smith2021asymptotic}), which is a contradiction.

\section{Asymptotically dense but not asymptotically  geodesic minimal surfaces} \label{section:notgeodesic}

In this section, we prove Theorem \ref{lastthmintro} from the introduction.  We construct a sequence of essential surfaces built from pants as follows. 

Recall the terminology from Section \ref{section:pants}. For $\eps>0$ and $R(\eps)>0$ sufficiently large, let $S^1_{\eps}$ and $S^2_{\eps}$ be $(1+O(\eps))$-quasifuchsian closed connected surfaces built out of at least one copy of each pants in $\pants$, all glued via $(\eps,R)$-good gluings.  For a fixed small $\theta$, we wish to build a surface $S_{\eps,\theta}$, by cutting $S^1_{\eps}$ and $S^2_{\eps}$ along a cuff $\gamma\in\curves$ and regluing them at an angle approximately equal to $\theta$.

Precisely, fix a cuff $\gamma\in\curves$. By the equidistribution of feet, there are pants $\pi_1^-\in \pants^-(\gamma)$ and $\pi_2^-\in \pants^-(\gamma)$, lying respectively on $S^1$ and $S^2$ so that
\[
|\ft \pi_1^- - \ft\pi_2^- - i\theta \,| <\frac{\eps}{R}.
\]
We also know there are $\pi_1^+ \in \pants^+(\gamma)$ and $\pi_2^+ \in \pants^+(\gamma)$, lying respectively on $S^1$ and $S^2$ that satisfy
\[
|\ft \pi_1^- - \tau(\ft \pi_1^+) | < \frac{\eps}{R} \aand
|\ft \pi_2^- - \tau(\ft \pi_2^+) | < \frac{\eps}{R},
\]
where $\tau: \N^1 (\sqrt{\gamma}) \to \N^1(\sqrt{\gamma})$ is the map $\tau:x\to x + 1 + i \pi$. We obtain our surface $S_{\eps,\theta}$ by cutting both $S^1$ and $S^2$ along $\gamma$ and regluing them by gluing $\pi_1^-$ to $\pi_2^+$ and $\pi_1^+$ to $\pi_2^-$.  It follows from \cite{KMC} that $S_{\eps,\theta}$ is $K(\theta)$-quasifuchsian.

We form a good matching of $(\epsilon,R)$ good pants as before for $\epsilon \rightarrow 0$, $R \to \infty$, except that we choose one pair of pants to glue together at a small angle $\theta$ along their cuff, for $\theta$ independent of $n$.  Denote the sequence of surfaces thus obtained by $S_n(\theta)$.  Choose a minimal surface $\Sigma_n(\theta)$ in the homotopy class of $S_n(\theta)$.  

Choose a Dirichlet fundamental domain for the action of $\pi_1(M)$ on $\mathbf{H}^3$, and lift the closed geodesic at which pants in $S_n(\theta)$ are glued at a definite angle to a geodesic $\gamma_n(\theta)$ in $\mathbf{H}^3$ that intersects $\Delta$ and that is contained in a lift $\tilde{S}_n(\theta)$ of $S_n(\theta)$.  The \textit{width} of the convex hull of a quasicircle in $\partial_{\infty}(\mathbf{H}^3)$ is the supremal distance between points in different components of its boundary.  We know that the width of the convex core of the limit set of  $\tilde{S}_n(\theta)$ is at least some positive constant just depending on $\theta$.  This implies that the $K$ for which $S_n(\theta)$ is $K$-quasi-conformal is uniformly bounded away from 1 (see e.g. the proof of the main theorem of \cite{Se}, which implies a bound for the width of the convex hull in terms of the quasiconformal constant.)  Work by Epstein \cite{epstein1986hyperbolic}[Equation 5.5] then implies that the minimal surface $\Sigma_n(\theta)$ must have principal curvatures uniformly bounded away from 1 by a constant that depends only on $\theta$.  

On the other hand, choose a sequence of points $p_n(\theta)$ on $S_n(\theta)$ so that:
\begin{enumerate} 
\item The tangent planes $\Pi_n$ to $S_n(\theta)$ are well-defined and converge to a plane $\Pi \in Gr_2(M)$ tangent to a dense totally geodesic plane in $Gr_2(M)$.  

\item The distance on $S_n(\theta)$ between $p_n(\theta)$ and $\gamma_n(\theta)$ tends to infinity.  

\end{enumerate}

Choose lifts $\tilde{S}_n(\theta)$ of $S_n(\theta)$ to $\mathbf{H}^3$ containing lifts $\tilde{p}_n(\theta)$ of $p_n(\theta)$ that are contained in $\Delta$, and let $\tilde{\Sigma}_n(\theta)$ be the corresponding lifts of $\Sigma_n(\theta)$ to $\mathbf{H}^3$.  Then by the second property above $\tilde{S}_n(\theta)$ converges to a totally geodesic plane on compact subsets of $\mathbf{H}^3$.  Therefore the convex hull of $\tilde{S}_n(\theta)$ converges to a totally geodesic plane on compact subsets of $\mathbf{H}^3$.  Since $\tilde{\Sigma}_n(\theta)$ is a minimal surface, it is contained in the convex hull, and so it must also converge to a totally geodesic plane.  It follows that the closure of $\Sigma_n(\theta)$ in $Gr_2(M)$ contains the closure of the totally geodesic plane tangent to $\Pi$ in $Gr_2(M)$, and so its closure is all of $Gr_2(M)$.

\bibliographystyle{amsalpha} 
	\bibliography{bibliography}

\end{document}